\newtheorem{theorem}{Theorem}[section]
\newtheorem{lemma}[theorem]{Lemma}
\newtheorem{proposition}[theorem]{Proposition}
\newtheorem{corollary}[theorem]{Corollary}
\theoremstyle{definition}
\newtheorem{definition}[theorem]{Definition}
\newtheorem{example}[theorem]{Example}
\theoremstyle{remark}
\newtheorem{remark}[theorem]{Remark}
\numberwithin{equation}{section}
\newcommand{\Des}{\ensuremath{\mathrm{Des}}}
\newcommand{\SPCT}{\ensuremath{\mathsf{SPCT}}}
\newcommand{\SYCT}{\ensuremath{\mathsf{SYCT}}}
\newcommand{\SShT}{\ensuremath{\mathsf{SShT}}}
\newcommand{\StdFill}{\ensuremath{\mathsf{StdFill}}}
\newcommand{\StdTab}{\ensuremath{\mathsf{StdTab}}}
\newcommand{\ShD}{\ensuremath{\mathsf{ShD}}}
\newcommand{\SYRT}{\ensuremath{\mathsf{SYRT}}}
\newcommand{\SET}{\ensuremath{\mathsf{SET}}}
\newcommand{\SIT}{\ensuremath{\mathsf{SIT}}}
\newcommand{\SRCT}{\ensuremath{\mathsf{SRCT}}}
\newcommand{\SRIT}{\ensuremath{\mathsf{SRIT}}}
\newcommand{\SRET}{\ensuremath{\mathsf{SRET}}}
\newcommand{\Rib}{\ensuremath{\mathsf{Rib}}}
\newcommand{\SPYCT}{\ensuremath{\mathsf{SPYCT}}}
\newcommand{\Sym}{\ensuremath{\mathsf{Sym}}}
\newcommand{\QSym}{\ensuremath{\mathsf{QSym}}}
\newcommand{\rect}{\ensuremath{\mathtt{rect}}}
\newcommand{\rw}{\ensuremath{\mathrm{rw}}}
\newcommand{\Peak}{\ensuremath{\mathrm{Peak}}}
\newcommand{\PQSym}{\ensuremath{\mathsf{Peak}}}
\newcommand{\yqs}{\ensuremath{\mathcal{S}}}
\newcommand{\comp}{\ensuremath{\mathrm{comp}}}
\newcommand{\C}{\ensuremath{\mathbb{C}}}
\newcommand{\excise}[1]{}
\newlength\cellsize \setlength\cellsize{12\unitlength}
\newcommand\cellify[1]{\def\thearg{#1}\def\nothing{}%
\ifx\thearg\nothing\vrule width0pt height\cellsize depth0pt%
  \else\hbox to 0pt{\usebox2\hss}\fi%
  \vbox to 12\unitlength{\vss\hbox to 12\unitlength{\hss$#1$\hss}\vss}}
\newcommand\tableau[1]{\vtop{\let\\=\cr
\setlength\baselineskip{-12000pt}
\setlength\lineskiplimit{12000pt}
\setlength\lineskip{0pt}
\halign{&\cellify{##}\cr#1\crcr}}}
\begin{document}


\title{Diagram supermodules for $0$-Hecke-Clifford algebras}  

\author[D. Searles]{Dominic Searles}
\address{Department of Mathematics and Statistics, University of Otago, 730 Cumberland St., Dunedin 9016, New Zealand}
\email{dominic.searles@otago.ac.nz}



\subjclass[2020]{Primary 05E10, 20C08, Secondary 05E05}

\date{February 24, 2022}


\keywords{$0$-Hecke algebra, $0$-Hecke-Clifford algebra, quasisymmetric Schur functions, Schur $Q$-functions, quasisymmetric Schur $Q$-functions}

\begin{abstract}
We introduce a general method for constructing modules for $0$-Hecke algebras and supermodules for $0$-Hecke-Clifford algebras from diagrams of boxes in the plane, and give formulas for the images of these modules in the algebras of quasisymmetric functions and peak functions under the relevant characteristic map. As initial applications, we resolve a question of Jing and Li (2015), introduce a new basis of the peak algebra analogous to the quasisymmetric Schur functions, uncover a new connection between Schur $Q$-functions and quasisymmetric Schur functions, give a representation-theoretic interpretation of families of tableaux used in constructing certain functions in the peak algebra, and establish a common framework for known $0$-Hecke module interpretations of bases of quasisymmetric functions.
\end{abstract}

\maketitle

%
\section{Introduction}
%
\label{sec:introduction}

The \emph{Hecke algebra} $H_n(q)$ is a deformation of the group algebra of the symmetric group $\mathfrak{S}_n$. There is a degenerate case called the \emph{$0$-Hecke algebra} obtained by specializing $q$ to $0$, whose representation theory differs markedly from the case for generic $q$. Representation theory of $0$-Hecke algebras is governed by the Hopf algebra $\QSym$ of \emph{quasisymmetric functions}, which contains the Hopf algebra $\Sym$ of symmetric functions as a subalgebra. Duchamp, Krob, Leclerc and Thibon \cite{DKLT} established an isomorphism, known as \emph{quasisymmetric characteristic}, between the Grothendieck group of $0$-Hecke modules and $\QSym$, under which the images of the simple $0$-Hecke modules are precisely the \emph{fundamental quasisymmetric functions}, a distinguished basis of $\QSym$ originally introduced by Gessel in the study of $P$-partitions \cite{Gessel}. 
Due to the similar role the famous Schur functions play as images of irreducible characters of symmetric groups under the characteristic map, the fundamental quasisymmetric functions can be regarded as a quasisymmetric analogue of the Schur basis of $\Sym$. Representation theory of $0$-Hecke algebras and its connection to $\QSym$ has since been widely studied and developed, e.g., \cite{Duchamp.Hivert.Thibon,  Hivert, Huang, Krob.Thibon}.

There is a similar story regarding the \emph{Hecke-Clifford (super)algebra} $HCl_n(q)$ introduced by Olshanski \cite{Olshanski}. This algebra is a deformation of the Sergeev algebra \cite{Sergeev} constructed from the group algebra of $\mathfrak{S}_n$ and the Clifford algebra. As for the Hecke algebra, there is a degenerate case obtained by specializing $q$ to $0$ called the \emph{$0$-Hecke-Clifford algebra} $HCl_n(0)$, whose representation theory is distinct from the case for generic $q$. Representation theory of $0$-Hecke-Clifford algebras, though less widely studied than that of $0$-Hecke algebras, is known to be governed by the \emph{peak algebra} $\PQSym$: an important Hopf subalgebra of $\QSym$ with applications to, for example, Kazhdan-Lusztig polynomials \cite{Brenti.Caselli}, Eulerian posets \cite{BHvW}, and theory of combinatorial Hopf algebras \cite{Aguiar.Bergeron.Sottile}. The peak algebra has a distinguished basis known as the \emph{peak functions}, introduced by Stembridge in the study of enriched $P$-partitions \cite{Stembridge:enriched}, which comprise a special case (\cite{BMSvW}) of the \emph{shifted quasisymmetric functions} introduced by Billey and Haiman in the context of Schubert calculus \cite{Billey.Haiman}. Bergeron, Hivert and Thibon \cite{BHT} constructed an isomorphism, which we refer to as the \emph{peak characteristic}, between the Grothendieck group of $0$-Hecke-Clifford supermodules and the peak algebra. They moreover determined the simple $0$-Hecke-Clifford supermodules, and showed that their images under the peak characteristic map are (certain constant multiples of) the peak functions. Further structural details regarding the representation theory of $0$-Hecke-Clifford algebras have been obtained by Li \cite{Li}.

There has been much recent activity concerning construction of $0$-Hecke modules whose quasisymmetric characteristics are elements of noteworthy bases of $\QSym$, e.g., \cite{Bardwell.Searles, BBSSZ, NSvWVW:0Hecke, Searles:0Hecke, TvW:1, TvW:2}, and also in determining the structure of these modules, e.g., \cite{CKNO:2, CKNO:1, CKNO:3, JKLO, Koenig}. It is therefore natural to ask the same question for $0$-Hecke-Clifford algebras: can one construct $0$-Hecke-Clifford supermodules whose peak characteristics are noteworthy families or bases of functions in $\PQSym$? Indeed, Jing and Li \cite{Jing.Li} defined the \emph{quasisymmetric Schur $Q$-functions} as a peak algebra analogue of the dual immaculate basis of $\QSym$, and asked whether such a representation-theoretic interpretation of these functions might exist. On the other hand, there are fewer known bases of $\PQSym$ than of $\QSym$, and several bases of $\QSym$ do not have known analogues in $\PQSym$. This motivates a related, more expansive question: can one provide a general method for constructing $0$-Hecke-Clifford supermodules, thus obtaining families of representation-theoretically defined functions in $\PQSym$, and additionally apply this towards finding peak algebra analogues of known bases of $\QSym$?

In this paper, we provide a new general approach that constructs both $0$-Hecke modules and $0$-Hecke-Clifford supermodules from diagrams of boxes in the plane. Given a diagram $D$ with $n$ boxes and a total order on the boxes, we may choose any subset $\StdTab(D)$ (called \emph{standard tableaux}) of the $n!$ different bijective fillings of the boxes of $D$ with the numbers $1, \ldots , n$. If $\StdTab(D)$ satisfies a condition we call \emph{ascent-compatibility} (Definition~\ref{def:ascentcompatible}) with respect to the chosen order, then we obtain both a $0$-Hecke module and $0$-Hecke-Clifford supermodule from $\StdTab(D)$. We call the modules obtained this way \emph{diagram modules}. We establish formulas for the quasisymmetric characteristic of the diagram $0$-Hecke module and, respectively, the peak characteristic of the diagram $0$-Hecke-Clifford supermodule in terms of $\StdTab(D)$, expressed as a nonnegative integral linear combination of fundamental quasisymmetric functions (respectively, peak functions).

A central motivation for our approach in terms of diagrams and tableaux stems from the fact that important families of functions in $\Sym$, $\QSym$ and $\PQSym$ are routinely defined in terms of their expansion into the fundamental quasisymmetric or peak bases, indexed by various families of standard tableaux. A significant advantage of our general construction is that finding $0$-Hecke modules or $0$-Hecke-Clifford supermodules whose characteristics are such a family of functions becomes merely a matter of confirming the ascent-compatibility condition on the associated family of tableaux. Moreover, since our construction method is defined on arbitrary diagrams in the plane, it can be applied directly to all the varied shapes of tableaux that routinely appear in formulas for functions in $\Sym$, $\QSym$ and $\PQSym$, e.g., tableaux of partition shape, composition shape, skew shape, shifted shape, ribbon shape, etc. Notably, the ascent-compatibility condition seems very natural and mild: it is satisfied by the tableaux for every family of functions that we are aware of on which $0$-Hecke modules or $0$-Hecke-Clifford modules have been constructed. Therefore, we expect diagram modules to provide a convenient and useful paradigm for constructing, in a uniform manner, further $0$-Hecke modules and $0$-Hecke-Clifford supermodules whose characteristics are noteworthy families of functions in $\Sym$, $\QSym$ or $\PQSym$.

We then apply this diagram modules framework in several directions. First, we resolve the question of Jing and Li \cite{Jing.Li} in the affirmative. Using a tableau formula for the expansion of  quasisymmetric Schur $Q$-functions in the peak basis \cite{Oguz}, a short verification of ascent-compatibility for the associated family of tableaux immediately yields $0$-Hecke-Clifford supermodules whose peak characteristics are precisely the quasisymmetric Schur $Q$-functions. We also analyze these supermodules further, demonstrating that they are always cyclic and explicitly determining a generator.

Next, we consider the question of obtaining new families of functions in $\PQSym$ as peak characteristics of $0$-Hecke-Clifford supermodules, and in particular, obtaining new bases of $\PQSym$ as analogues of important bases of $\QSym$. Constructing diagram $0$-Hecke modules whose quasisymmetric characteristics are a particular family of functions in $\QSym$ immediately yields corresponding diagram $0$-Hecke-Clifford supermodules and formulas for their peak characteristics, thus a family of representation-theoretically defined functions in $\PQSym$. We apply our framework to the well-studied (Young) quasisymmetric Schur basis of $\QSym$ \cite{HLMvW11:QS, LMvWbook}, and thus obtain a new basis of $\PQSym$ as an analogue of the Young quasisymmetric Schur functions, which we call the \emph{peak Young quasisymmetric Schur functions}.

We apply our techniques further to analyze properties of this new basis. The \emph{Schur $Q$-functions}, a family of symmetric functions integral in the study of projective representations of symmetric groups \cite{Stembridge:shifted}, are the peak algebra analogue of the Schur functions. Similarly to how the fundamental quasisymmetric functions positively refine the Schur functions, the peak functions positively refine the Schur $Q$-functions. Moreover, the Young quasisymmetric Schur functions provide an especially elegant refinement of the Schur functions \cite{LMvWbook}, and so it is natural to ask how our new basis relates to the Schur $Q$-functions. To accomplish this, we apply our construction method once more to the standard shifted tableaux defining Schur $Q$-functions. We thus obtain $0$-Hecke-Clifford supermodules whose peak characteristics are the Schur $Q$-functions, and we prove each of these supermodules is isomorphic to a supermodule for some peak Young quasisymmetric Schur function.  As a result, the peak Young quasisymmetric Schur functions in fact contain the Schur $Q$-functions as a subset. Therefore, in addition to providing a peak algebra analogue of Young quasisymmetric Schur functions, the peak Young quasisymmetric Schur functions naturally extend the Schur $Q$-functions to a basis of the peak algebra.

We also consider the $0$-Hecke modules that arise, as a consequence of the diagram modules framework, alongside the $0$-Hecke-Clifford supermodules for the Schur $Q$-functions. This leads to a further connection between Schur $Q$-functions and Young quasisymmetric Schur functions: letting the aforementioned isomorphism descend to the case of $0$-Hecke modules, we find that the quasisymmetric characteristics of the $0$-Hecke modules arising from the standard shifted tableaux generating Schur $Q$-functions are, in fact, Young quasisymmetric Schur functions.

We expect that similar applications of the diagram modules framework to other important families of functions in $\QSym$ will yield new families of functions in $\PQSym$ with similarly notable properties and relationships to other families.

A further consequence of the diagram modules framework is a representation-theoretic interpretation of the \emph{marked standard (shifted) tableaux} used in the study of Schur $Q$-functions \cite{Assaf:shifted} and \emph{marked standard peak composition tableaux} used in the study of quasisymmetric Schur $Q$-functions \cite{Oguz}. Our construction method realizes these families of tableaux as precisely the basis elements of the $0$-Hecke-Clifford supermodules whose peak characteristics are, respectively, the Schur $Q$-functions and the quasisymmetric Schur $Q$-functions. In general, for any ascent-compatible family of standard tableaux having any underlying diagram shape, the corresponding family of marked standard tableaux (in which the integers appearing in the standard tableaux can be either marked or unmarked) forms the basis of a diagram $0$-Hecke-Clifford supermodule.

Demonstrating the wide applicability of the diagram modules framework, we show that the $0$-Hecke modules constructed for notable bases of $\QSym$ in each of \cite{Bardwell.Searles, BBSSZ, NSvWVW:0Hecke, Searles:0Hecke, TvW:1} are cases of diagram $0$-Hecke modules. Extending this further, we show that the $0$-Hecke modules constructed from permuted tableaux in \cite{CKNO:2, JKLO, TvW:2}, and the projective indecomposable $0$-Hecke modules constructed in terms of ribbon tableaux in \cite{Huang}, are likewise obtained as diagram $0$-Hecke modules. Diagram $0$-Hecke modules moreover generalize the family of \emph{weak Bruhat interval modules} considered by Jung, Kim, Lee and Oh \cite{JKLO} as an alternative framework for studying the $0$-Hecke modules in \cite{Bardwell.Searles, BBSSZ, Searles:0Hecke, TvW:1, TvW:2}. We prove that every weak Bruhat interval module can be realized as a diagram $0$-Hecke module; on the other hand, there exist diagram $0$-Hecke modules that are not isomorphic to any weak Bruhat interval module. Diagram modules thus provide a universal framework not only for known $0$-Hecke modules arising from notable bases of $\QSym$, but many further notable families of $0$-Hecke modules in addition.

We briefly conclude with a sample of avenues for further research arising from diagram modules. There are two overarching directions to follow: firstly in better understanding the structure of the diagram modules themselves, and secondly in applying the diagram modules framework to obtain new families of functions in $\PQSym$, and to determine their properties and connections to known structures.

%
\section{Background}
%
\label{sec:background}

\subsection{Compositions, quasisymmetric functions and peak functions}

A \emph{composition} $\alpha = (\alpha_1, \ldots , \alpha_k)$ is a finite sequence of positive integers. The entries $\alpha_i$ of a composition $\alpha$ are called the \emph{parts} of $\alpha$. The number of parts of $\alpha$ is called the \emph{length} of $\alpha$, denoted $\ell(\alpha)$. 
If the parts of $\alpha$ sum to $n$, $\alpha$ is called a \emph{composition of $n$} and we write $\alpha \vDash n$.

Let $[n]$ denote the set $\{1, \ldots , n\}$. The \emph{descent set} $\Des(\alpha)$ of a composition $\alpha= (\alpha_1, \ldots , \alpha_k)\vDash n$ is the subset $\{\alpha_1, \alpha_1+\alpha_2, \ldots ,  \alpha_1+\alpha_2 + \cdots \alpha_{k-1}\}$ of $[n-1]$. The map $\alpha \mapsto \Des(\alpha)$ is a bijection between compositions of $n$ and subsets of $[n-1]$. Given a subset $X$ of $[n-1]$, we write $\comp_n(X)$ for the composition of $n$ whose descent set is $X$. 
For example, $\Des(2,3,1,1) = \{2,5,6\}\subseteq [7]$ and $\comp_8(\{1,2,5\}) = (1,1,3,3)\vDash 8$. 

Let $\C[[x_1, x_2, \ldots ]]$ denote the Hopf algebra of formal power series of bounded degree in infinitely many commuting variables. The Hopf algebra $\QSym$ of quasisymmetric functions is a subalgebra of $\C[[x_1, x_2, \ldots ]]$, and the Hopf algebra $\Sym$ of symmetric functions is a subalgebra of $\QSym$. Bases of $\QSym$ are indexed by compositions. The \emph{fundamental quasisymmetric functions} \cite{Gessel} form an important basis of $\QSym$. Given $\alpha\vDash n$, the fundamental quasisymmetric function $F_\alpha$ is defined by  
\[F_\alpha = \sum x_{i_1}x_{i_2} \cdots x_{i_n},\]
where the sum is over sequences $i_1, \ldots , i_n$ of positive integers such that $i_1 \le \cdots \le i_n$ and $i_j < i_{j+1}$ whenever $j\in \Des(\alpha)$.

\begin{example}
Let $\alpha = (1,2,1)\vDash 4$. Then $\Des(\alpha) = \{1,3\}$, and we have 
\[F_{(1,2,1)} = \sum_{i<j\le k<\ell} x_ix_jx_kx_\ell = x_1x_2^2x_3+x_1x_2^2x_4+ x_2x_3^2x_4 + \cdots + x_1x_2x_3x_4 + x_1x_2x_3x_5+ \cdots \]
\end{example}

A \emph{peak composition} is a composition whose parts (except possibly the last part) are all greater than $1$. Therefore, under the map taking a composition to its descent set, peak compositions of $n$ are in bijection the with subsets of $[n-1]$ that do not contain $1$ and do not contain any pair of consecutive integers. 
For an arbitrary set $X$ of positive integers, the \emph{peak set of $X$}, denoted $\Peak(X)$, is the set $\{i\in X : i > 1 \mbox{ and } i-1 \notin X\}$. For an arbitrary composition $\alpha$, the \emph{peak set of $\alpha$} is the set $\Peak(\Des(\alpha))$, which we write as $\Peak(\alpha)$ for short. For example, if $\alpha = (1,2,2,1,3)$, then $\Des(\alpha) = \{1,3,5,6\}$ and $\Peak(\alpha) = \Peak(\{1,3,5,6\}) = \{3,5\}$. Note that $\Peak(\alpha) = \Des(\alpha)$ if and only if $\alpha$ is a peak composition.

The \emph{peak algebra} $\PQSym$ is a Hopf subalgebra of $\QSym$, and bases of $\PQSym$ are indexed by peak compositions. An important basis of $\PQSym$ is given by the \emph{peak functions} \cite{Stembridge:enriched}. Given a peak composition $\alpha$, the peak function $K_\alpha$ is defined by 
\begin{equation}\label{eqn:peakintofundamental}
K_\alpha = 2^{|\Peak(\alpha)|+1}\sum_{\beta \, : \, \Peak(\alpha)\subseteq \Des(\beta)\triangle(\Des(\beta)+1)} F_\beta,
\end{equation}
where $X\triangle Y$ is the symmetric difference of sets $X$ and $Y$, and $X+1 = \{x+1: x\in X\}$.

\subsection{0-Hecke algebras and 0-Hecke-Clifford algebras}
The \emph{Hecke algebra} $H_n(q)$ is the $\C$-algebra with generators $\pi_1, \ldots , \pi_{n-1}$, subject to the relations
\begin{align}\label{eqn:qHecke}
\pi_i^2 & =  (q-1)\pi_i+q & & \mbox{ for all } 1 \le i \le n-1 \nonumber \\
\pi_i\pi_j & =  \pi_j\pi_i  & &\mbox{ for all } i, j  \mbox{ such that } |i-j|\ge 2 \\
\pi_i\pi_{i+1}\pi_i & = \pi_{i+1}\pi_i\pi_{i+1} & &   \mbox{ for all } 1\le i \le n-2. \nonumber
\end{align}
We denote these operators by $\pi_i$ rather than the more usual $T_i$ in order to reserve the letter $T$ for tableaux that appear in later sections. 
Notice that specializing $q$ to $1$ yields the group algebra of the symmetric group $\mathfrak{S}_n$. 

The \emph{$0$-Hecke algebra $H_n(0)$} is obtained by specializing $q$ to $0$, so that the first relation in (\ref{eqn:qHecke}) becomes $\pi_i^2=-\pi_i$. 

There are $2^{n-1}$ simple $H_n(0)$-modules, all of which are one-dimensional \cite{Norton}. These simple modules may be indexed by the $2^{n-1}$ compositions of $n$; let ${\bf F}_\alpha$ denote the simple module corresponding to the composition $\alpha$ and let $\{v_\alpha\}$ be a basis of ${\bf F}_\alpha$. The structure of ${\bf F}_\alpha$ as an $H_n(0)$-module is given by 
\begin{align}\label{eqn:irreps}
\pi_iv_\alpha = \begin{cases} -v_\alpha & \mbox{ if } i\in \Des(\alpha) \\
					       0 & \mbox{ if } i \notin \Des(\alpha).
					       \end{cases}
 \end{align}
 
 Let $G_0(H_n(0))$ denote the Grothendieck group of the category of finite-dimensional $H_n(0)$-modules, and let $\mathcal{G} = \bigoplus_{n\ge 0} \mathcal{G}_0(H_n(0))$. For an $H_n(0)$-module ${\bf N}$, let $[{\bf N}]$ denote its isomorphism class. Duchamp, Krob, Leclerc and Thibon \cite{DKLT} defined an algebra isomorphism $ch:\mathcal{G}\rightarrow \QSym$, under which
\begin{equation}\label{eqn:qsymchar}
ch([{\bf F}_\alpha]) = F_\alpha.
\end{equation}
The \emph{quasisymmetric characteristic} of an $H_n(0)$-module ${\bf N}$ is the quasisymmetric function $ch([{\bf N}])$.

The \emph{$0$-Hecke-Clifford algebra} $HCl_n(0)$ is the algebra generated by $\pi_1, \ldots , \pi_{n-1}$ and $c_1, \ldots , c_n$, where the $\pi_i$ generate the $0$-Hecke algebra (i.e., satisfy the relations (\ref{eqn:qHecke}) at $q=0$), and the $c_j$ generate the \emph{Clifford algebra} $Cl_n$, i.e., satisfy the relations 
\begin{align}
c_i^2 & =  -1 && \mbox{ for all } 1 \le i \le n \\ \nonumber
c_ic_j & =  -c_jc_i  && \mbox{ for all } i, j  \mbox{ such that } i\neq j,
\end{align}

and the $\pi_i$ and $c_j$ additionally satisfy the cross-relations 
\begin{align}\label{eqn:crossrelations}
\pi_ic_j & =  c_j\pi_i & & \mbox{ for } j\neq i, i+1 \nonumber \\ 
\pi_ic_i & =  c_{i+1}\pi_i  & &\mbox{ for all } 1\le i \le n-1 \\
(\pi_i+1)c_{i+1} & = c_i(\pi_i+1) & &   \mbox{ for all } 1\le i \le n-1. \nonumber
\end{align}
The $0$-Hecke-Clifford algebra is the $q=0$ specialization of the Hecke-Clifford algebra $HCl_n(q)$ introduced by Olshanksi \cite{Olshanski}. 
The $0$-Hecke-Clifford algebra is a superalgebra: it is graded by $\mathbb{Z}_2$ with each $\pi_i$ having degree $0$ and each $c_j$ having degree~$1$.

Bergeron, Hivert and Thibon \cite{BHT} defined $HCl_n(0)$-supermodules $\mathbf{M}_\alpha$ as the modules induced from the simple $H_n(0)$-modules ${\bf F}_\alpha$. Letting $\mathcal{\widetilde{G}} = \bigoplus_{n\ge 0} \mathcal{G}_0(HCl_n(0))$, they defined an algebra isomorphism $\widetilde{ch}:\mathcal{\widetilde{G}}\rightarrow \Peak$, which we refer to as the \emph{peak characteristic}. Under this isomorphism, one has 
\begin{equation}\label{eqn:peakcharM}
\widetilde{ch}([\mathbf{M}_\alpha]) = K_{\comp_n(\Peak(\alpha))}.
\end{equation}

For $X = \{i_1 < i_2 < \cdots < i_k\} \subseteq [n]$, define $c_X = c_{i_1}c_{i_2}\cdots c_{i_k}$. Each $\mathbf{M}_\alpha$ has basis $\{c_Xv_\alpha : X\subseteq [n]\}$, where $v_\alpha$ is the single basis element of ${\bf F}_\alpha$. 
The image of each basis element $c_Xv_\alpha$ under $\pi_i$ can be determined from (\ref{eqn:irreps}) and the cross-relations (\ref{eqn:crossrelations}). We reproduce these formulas from \cite{BHT} as (\ref{eqn:MIdescent}) and (\ref{eqn:MIascent}) below, since we will need them in Section~\ref{sec:modules}. 

If $i\in \Des(\alpha)$, then
\begin{equation}\label{eqn:MIdescent}
\pi_i(c_Xv_\alpha) = \begin{cases} -c_Xv_\alpha & \mbox{ if } i, i+1 \notin X \\
                                                        -c_{(X\setminus \{i\})\cup \{i+1\}}v_\alpha & \mbox{ if } i \in X \mbox{ and } i+1 \notin X \\
                                                        -c_Xv_\alpha & \mbox{ if } i \notin X \mbox{ and } i+1 \in X \\
                                                        c_{X\setminus \{i,i+1\}}v_\alpha & \mbox{ if } i, i+1 \in X.
                                                        \end{cases}
\end{equation}

If $i\notin \Des(\alpha)$, then 
\begin{equation}\label{eqn:MIascent}
\pi_i(c_Xv_\alpha) = \begin{cases} 0 & \mbox{ if } i, i+1 \notin X \\
                                                        0 & \mbox{ if } i \in X \mbox{ and } i+1 \notin X \\
                                                        -c_Xv_\alpha + c_{(X\setminus \{i+1\})\cup \{i\}}v_\alpha & \mbox{ if } i \notin X \mbox{ and } i+1 \in X \\
                                                        -c_Xv_\alpha + c_{X\setminus \{i,i+1\}}v_\alpha & \mbox{ if } i, i+1 \in X.
                                                        \end{cases}
\end{equation}

%
\section{$0$-Hecke-Clifford supermodules from diagrams}\label{sec:modules}
%
In this section we give a construction method that produces $0$-Hecke modules and $0$-Hecke-Clifford supermodules from diagrams in the plane, and we compute their quasisymmetric (respectively, peak) characteristics.

\subsection{Ascent-compatibility and construction of the modules}
Let $D$ denote a diagram consisting of $n$ boxes in the coordinate plane. Define a \emph{standard filling} of $D$ to be a bijective assignment of the integers $1, \ldots , n$ to the boxes of $D$, and let $\StdFill(D)$ denote the set of all $n!$ standard fillings of $D$. If $T\in \StdFill(D)$, we say that the box to which $i$ is assigned has \emph{entry} $i$, and we depict $T$ by writing the entries inside their boxes in $D$. Fix a total ordering on the boxes of $D$: we call this the \emph{reading order} on $D$. If $T\in \StdFill(D)$, the \emph{reading word} $\rw(T)$ of $T$ is the permutation of $[n]$ obtained by writing the entries of $T$ in reading order from left to right. 

Let $\StdTab(D)$ be an arbitrary nonempty subset of $\StdFill(D)$. Define a function $\Des: \StdTab(D) \rightarrow [n-1]$ by 
\begin{equation}\label{eqn:rwdescent}
\Des(T) = \{i \in T : i \mbox{ is to the right of } i+1 \mbox{ in } \rw(T)\}.
\end{equation}
We call $\Des(T)$ the \emph{descent set} of $T$ and call an element of $\Des(T)$ a \emph{descent} of $T$. An element of $[n-1]$ not in $\Des(T)$ is called an \emph{ascent} of $T$. 
For any $T\in \StdFill(D)$ and any $1\le i \le n-1$, let $s_iT$ denote the element of $\StdFill(D)$ obtained by exchanging the entries $i$ and $i+1$ in $T$. The ascents of $T$ are partitioned into \emph{attacking} and \emph{nonattacking} ascents: an ascent $i$ is attacking if $s_iT\notin \StdTab(D)$, and nonattacking if $s_iT\in \StdTab(D)$.

\begin{example}\label{ex:rw}
Let $n=3$ and let $D =  \tableau{ & & {\ } \\ {\ } & {\ } }\,$, with reading order defined by reading the higher box first and then the lower boxes from left to right. Let 
\[\StdTab(D) = \left\{ R = \tableau{ & & 3 \\ 1 & 2 } \, , \,\,\, S = \tableau{ & & 1 \\ 2 & 3 } \, , \,\,\, T = \tableau{ & & 1 \\ 3 & 2 } \right\}.\]  
Here $\rw(R) = 312$, $\rw(S) = 123$, $\rw(T)=132$. We have $\Des(R) = \{2\}$, $\Des(S) = \emptyset$, $\Des(T)=\{2\}$. In $S$, the ascent $1$ is attacking since $s_1S \notin \StdTab(D)$, while the ascent $2$ is nonattacking since $s_2S = T \in \StdTab(D)$.
\end{example}

To facilitate our main definition below, we also say that $T$ has an \emph{ascent in positions $(r,s)$} if $r<s$ and $\rw(T)_{s}-\rw(T)_{r}=1$, where $\rw(T)_r$ denotes the $r$th entry (from the left) of $\rw(T)$. For example, the standard tableau $R$ from Example~\ref{ex:rw} has $\rw(R)=312$, and thus the ascent $1$ in $R$ is an ascent in positions $(2,3)$.

\begin{definition}\label{def:ascentcompatible}
The set $\StdTab(D)$ is said to be \emph{ascent-compatible} if whenever $T, T' \in \StdTab(D)$ both have an ascent in positions $(r,s)$, this ascent is attacking in $T$ if and only if it is attacking in $T'$. 
\end{definition}
 In other words, for ascents involving the same pair of boxes of $D$, whether the ascent is attacking or nonattacking does not depend on which $T\in \StdTab(D)$ the ascent appears in.

\begin{example}
The family $\StdTab(D)$ of standard tableaux defined in Example~\ref{ex:rw} is not ascent-compatible. We have $\rw(R) = 312$ and $\rw(S) = 123$, so $1$ is an ascent of $R$ in positions $(2,3)$ and $2$ is an ascent of $S$ in positions $(2,3)$. Since these ascents are in the same positions, ascent-compatibility requires they both be attacking or both be nonattacking. But $s_1R\notin\StdTab(D)$ while $s_2S = T\in \StdTab(D)$, so the ascent of $R$ in positions $(2,3)$ is attacking but the ascent of $S$ in positions $(2,3)$ is nonattacking. 
\end{example}

We now construct $H_n(0)$-modules from ascent-compatible sets $\StdTab(D)$. Let ${\bf N}_{\StdTab(D)}$ denote the complex span of $\StdTab(D)$. Define operators $\pi_1, \ldots , \pi_{n-1}$ on ${\bf N}_{\StdTab(D)}$ by
\begin{equation}\label{eqn:0Hecke}
\pi_iT = \begin{cases} -T & \mbox{ if } i\in \Des(T)  \\
                                        0 & \mbox{ if } i \notin \Des(T), i \mbox{ is attacking } \\
                                        s_iT & \mbox{ if } i \notin \Des(T), i \mbox{ is nonattacking }
                                        \end{cases}
\end{equation}
for each $T\in \StdTab(D)$.

\begin{example}\label{ex:0H}
Let $n=3$ and $D = \tableau{  {\ } & {\ } \\ & {\ } }\, $, with reading order defined by reading the lower box first and then the higher boxes from left to right. Let
\[\StdTab(D) = \left\{ R = \tableau{ 1 & 3 \\ & 2 } \, , \,\,\, S = \tableau{ 2 & 3 \\ & 1 } \, ,\,\,\, T = \tableau{ 3 & 2 \\ & 1 } \right\}.\]
Then $\rw(R) = 213$, $\rw(S) = 123$ and $\rw(T)=132$. Only $S$ has an ascent in positions $(1,2)$ or $(2,3)$; $R$ and $T$ both have ascents in positions $(1,3)$ but both these ascents are attacking, hence $\StdTab(D)$ is ascent-compatible. By (\ref{eqn:0Hecke}), we have
\[\pi_1R = -R, \,\,\,\,\,\, \pi_2R = 0,  \,\,\,\,\,\, \pi_1S = R, \,\,\,\,\,\, \pi_2S=T,  \,\,\,\,\,\, \pi_1T=0,  \,\,\,\,\,\, \pi_2T=-T.\]
\end{example}

\begin{theorem}\label{thm:0Hecke}
Let $D$ be a diagram of $n$ boxes in the plane and $\StdTab(D)$ an ascent-compatible subset of $\StdFill(D)$. Then the operators $\{\pi_i : 1\le i \le n-1\}$ define a $0$-Hecke action on ${\bf N}_{\StdTab(D)}$.
\end{theorem}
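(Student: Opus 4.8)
The plan is to verify directly that the operators $\pi_i$ defined by (\ref{eqn:0Hecke}) satisfy the relations (\ref{eqn:qHecke}) at $q=0$, namely $\pi_i^2 = -\pi_i$, the commutation relation $\pi_i\pi_j = \pi_j\pi_i$ for $|i-j|\ge 2$, and the braid relation $\pi_i\pi_{i+1}\pi_i = \pi_{i+1}\pi_i\pi_{i+1}$. The first relation is immediate from the case analysis in (\ref{eqn:0Hecke}): if $i\in\Des(T)$ then $\pi_i^2T = -\pi_iT = T = -\pi_iT$; if $i$ is an attacking ascent then $\pi_i^2T = 0 = -\pi_iT$; and if $i$ is a nonattacking ascent then $\pi_iT = s_iT$, and since the pair of boxes carrying $i$ and $i+1$ is the same in $T$ and in $s_iT$ (merely with entries swapped), $s_iT$ has $i$ as a descent, so $\pi_i^2T = \pi_i(s_iT) = -s_iT = -\pi_iT$. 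Here one should note that $s_i$ sends $\StdTab(D)$ to itself on nonattacking ascents (by definition) and that "$i$ is a descent of $s_iT$" follows because swapping $i$ and $i+1$ reverses their relative order in the reading word.

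Next I would handle the commutation relation $\pi_i\pi_jT = \pi_j\pi_iT$ for $|i-j|\ge 2$. The key observation is that the boxes containing $i,i+1$ are disjoint from the boxes containing $j,j+1$, so the operators $s_i$ and $s_j$ commute as maps on $\StdFill(D)$, and whether $i$ is a descent/attacking ascent/nonattacking ascent in $T$ is unaffected by applying $s_j$, and vice versa. One does a short case check across the $3\times 3$ combinations of "descent / attacking / nonattacking'' for $i$ and for $j$; in each the two sides agree (up to the scalars $0,\pm1$), using ascent-compatibility to guarantee that the attacking/nonattacking status of $i$ is the same in $T$ and in $s_jT$ (they share the relevant pair of boxes).

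The main obstacle is the braid relation $\pi_i\pi_{i+1}\pi_iT = \pi_{i+1}\pi_i\pi_{i+1}T$, where the boxes of $i,i+1,i+2$ interact. Here I would argue by cases on the relative positions, in the reading word $\rw(T)$, of the three entries $i,i+1,i+2$ — equivalently on which of $i$ and $i+1$ lie in $\Des(T)$ and, when an entry is an ascent, whether it is attacking or nonattacking. This is the standard kind of verification (mirroring the classical fact that the $\pi_i$ on permutations give an $H_n(0)$-action, and the computation in \cite{BHT} for $\mathbf{M}_\alpha$), but it requires care: one must track that $s_i$ and $s_{i+1}$ interact like simple transpositions, use that nonattacking ascents stay inside $\StdTab(D)$, and crucially invoke ascent-compatibility to ensure that after applying an $s$-operator the attacking/nonattacking classification of the next relevant ascent is determined by its pair of boxes and hence consistent. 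The cases where some intermediate step produces $0$ need to be matched on both sides; the cases where all steps are nonattacking reduce to the braid relation for transpositions; and the mixed cases are checked individually. Since (\ref{eqn:0Hecke}) is set up precisely so that on the "permutation part'' it mimics the known $0$-Hecke action and the attacking ascents simply kill the vector, each case resolves by a direct short computation. Having verified all three families of relations on the basis $\StdTab(D)$, linearity gives the $0$-Hecke action on ${\bf N}_{\StdTab(D)}$.
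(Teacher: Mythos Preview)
Your proposal is correct and follows essentially the same approach as the paper: verify the three $0$-Hecke relations on basis elements by case analysis on descent/attacking/nonattacking status, invoking ascent-compatibility to control how the status of one ascent behaves after applying an $s_j$ supported on different boxes. The paper carries out the braid relation in full detail (a nested case split on $\pi_iT\in\{0,-T,s_iT\}$ and then on $\pi_{i+1}T$), whereas you only sketch it; your sketch is accurate, but be aware that the ``mixed cases'' do require tracking the positions of $i,i+1,i+2$ in $\rw(T)$ explicitly to see which subsequent ascents are descents, so the execution is not entirely mechanical.
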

\begin{proof}
Let $T\in \StdTab(D)$. By definition, if $i$ is a nonattacking ascent in $T$, then $s_iT\in \StdTab(D)$. Hence $\pi_iT\in {\bf N}_{\StdTab(D)}$ for all $i$.

We now prove that the relations (\ref{eqn:qHecke}) are satisfied at $q=0$. First we show that $\pi_i^2T=-\pi_iT$. If $\pi_iT = -T$, then $\pi_i^2T = \pi_i(-T) = -\pi_iT$. If $\pi_iT=0$, then $\pi_i^2T = 0 = -\pi_iT$. If $\pi_iT = s_iT$, then $\pi_i^2T = \pi_is_iT = -s_iT = -\pi_iT$, where the second equality follows since if $i$ is an ascent in $T$ then $i$ is a descent in $s_iT$. Hence $\pi_i^2=\pi_i$.

Next we show that if $|i-j|\ge 2$, then $\pi_i\pi_jT = \pi_j\pi_iT$. First suppose $\pi_iT = 0$. Then $\pi_j\pi_iT=0$. If $\pi_jT = 0$ or $\pi_jT=-T$ then $\pi_i\pi_jT=0$. If $\pi_jT =s_jT$ then $\pi_i\pi_jT = \pi_is_jT$. Since $i$ and $i+1$ occupy the same boxes in $s_jT$ as they do in $T$, and this ascent is attacking in $T$, we have $\pi_is_jT=0$ by ascent-compatibility.

Now suppose $\pi_iT=-T$. If $\pi_jT=0$ then $\pi_i\pi_jT = 0 = \pi_j\pi_iT$. If $\pi_jT=-T$ then $\pi_i\pi_jT = T = \pi_j\pi_iT$. If  $\pi_jT =s_jT$ then $\pi_j\pi_iT = -s_jT$, while $\pi_i\pi_jT = \pi_is_jT$. But $\pi_is_jT=-s_jT$ since $i$ is a descent in $T$ and therefore a descent in $s_jT$.

Now suppose $\pi_iT=s_iT$. Then $\pi_j\pi_iT = \pi_js_iT$. If $\pi_jT=0$ then $\pi_i\pi_jT=0$, and $\pi_js_iT$ must be $0$ by ascent-compatibility since $j$ and $j+1$ occupy the same boxes in $s_iT$ as they do in $T$. If $\pi_jT=-T$ then $\pi_i\pi_jT = -s_iT$, and $\pi_js_iT$ must be equal to $-s_iT$ since $j$ is a descent in $T$ and therefore a descent in $s_iT$. If $\pi_jT=s_jT$ then $\pi_i\pi_jT = \pi_is_jT$ which is equal to $s_is_jT$ by ascent-compatibility since $i$ and $i+1$ occupy the same boxes in $s_jT$ as they do in $T$. Similarly $\pi_j\pi_iT = \pi_js_iT$ which is equal to $s_js_iT$ by ascent-compatibility since $j$ and $j+1$ occupy the same boxes in $s_iT$ as they do in $T$. Since $|i-j|\ge 2$,  we have $s_is_jT = s_js_iT$. Hence $\pi_i\pi_j = \pi_j\pi_i$ when $|i-j|\ge 2$.

Finally, we establish $\pi_i\pi_{i+1}\pi_iT = \pi_{i+1}\pi_i\pi_{i+1}T$ via the following cases.
\begin{enumerate}
\item $\pi_iT=0$
\item $\pi_iT=-T$
 \begin{enumerate}
 \item $\pi_{i+1}T=0$
 \item $\pi_{i+1}T=-T$
 \item $\pi_{i+1}T=s_{i+1}T$
 \end{enumerate}
\item $\pi_iT=s_iT$
 \begin{enumerate}
 \item $\pi_{i+1}T=0$
 \item $\pi_{i+1}T=-T$
 \item $\pi_{i+1}T=s_{i+1}T$
 \end{enumerate}
\end{enumerate}
\noindent
(1): Here we have $\pi_i\pi_{i+1}\pi_iT=0$. If $\pi_{i+1}T=0$ or $\pi_{i+1}T=-T$ then $\pi_{i+1}\pi_i\pi_{i+1}T = 0$. Now suppose $\pi_{i+1}T = s_{i+1}T$. Then both $i$ and $i+1$ are ascents in $T$, so $i, i+1, i+2$ appear in increasing order in $\rw(T)$ from left to right. Hence $i$ is an ascent in $s_{i+1}T$. If $\pi_i(s_{i+1}T) = 0$ we are done. If $\pi_i(s_{i+1}T) = s_is_{i+1}T$ then by ascent-compatibility, $i+1$ must be an attacking ascent in $s_is_{i+1}T$ since $i$ is an attacking ascent in $T$ and the boxes occupied by $i+1$ and $i+2$ in $s_is_{i+1}T$ are precisely those occupied by $i$ and $i+1$ in $T$. Hence $0 = \pi_{i+1}s_is_{i+1}T = \pi_{i+1}\pi_i\pi_{i+1}T$.

\noindent
(2)(a): Here we have $\pi_{i+1}\pi_i\pi_{i+1}T=0$, and also $\pi_i\pi_{i+1}\pi_iT = -\pi_i\pi_{i+1}T = 0$.

\noindent
(2)(b): Here $\pi_{i+1}\pi_i\pi_{i+1}T= -T = \pi_i\pi_{i+1}\pi_iT$.

\noindent
(2)(c): Here $\pi_i\pi_{i+1}\pi_iT = -\pi_i(s_{i+1}T)$ and $\pi_{i+1}\pi_i\pi_{i+1}T = \pi_{i+1}\pi_is_{i+1}T$. 

If $\pi_i(s_{i+1}T)=0$ then both sides are $0$. If $\pi_i(s_{i+1}T) = -s_{i+1}T$, then $\pi_i\pi_{i+1}\pi_iT=s_{i+1}T$ and $\pi_{i+1}\pi_i\pi_{i+1}T = -\pi_{i+1}s_{i+1}T = s_{i+1}T$ since $i+1$ is a descent in $s_{i+1}T$. Now suppose $\pi_is_{i+1}T = s_is_{i+1}T$. Then $\pi_i\pi_{i+1}\pi_iT = -s_is_{i+1}T$ and $\pi_{i+1}\pi_is_{i+1}T = \pi_{i+1}s_is_{i+1}T$. Hence it suffices to show that $i+1$ is a descent in $s_is_{i+1}T$. We have $i$ is a descent in $T$ and an ascent in $s_{i+1}T$, so $i+1$ appears first in $\rw(T)$, followed by $i$, followed by $i+2$. Therefore $i+1$ is right of $i+2$ in $\rw(s_is_{i+1}T)$, as required.

\noindent
(3)(a): Here we have $\pi_{i+1}\pi_i\pi_{i+1}T=0$ and $\pi_i\pi_{i+1}\pi_iT =  \pi_i\pi_{i+1}s_iT$. Both $i$ and $i+1$ are ascents in $T$, hence $i$ appears first in $\rw(T)$, followed by $i+1$, followed by $i+2$. Therefore $i+1$ is an ascent in $s_iT$. If $\pi_{i+1}s_iT=0$ we are done. If $\pi_{i+1}s_iT=s_{i+1}s_iT$, then by ascent-compatibility $i$ is an attacking ascent in $s_{i+1}s_iT$, since $i+1$ is an attacking ascent in $T$ and the boxes occupied by $i$ and $i+1$ in $s_is_{i+1}T$ are exactly those occupied by $i+1$ and $i+2$ in $T$. Hence $\pi_is_{i+1}s_iT = 0$, and so $\pi_i\pi_{i+1}\pi_iT=0$.

\noindent
(3)(b): Here we have $\pi_{i+1}\pi_i\pi_{i+1}T = -\pi_{i+1}s_iT$ and $\pi_i\pi_{i+1}\pi_iT =\pi_i\pi_{i+1}s_iT$.

If $\pi_{i+1}s_iT=0$, then both sides are $0$. If $\pi_{i+1}s_iT=-s_iT$, then $\pi_{i+1}\pi_i\pi_{i+1}T = s_iT$ and $\pi_i\pi_{i+1}\pi_iT =-\pi_is_iT = s_iT$ since $i$ is a descent in $s_iT$. Now suppose $\pi_{i+1}s_iT=s_{i+1}s_iT$. Then $\pi_{i+1}\pi_i\pi_{i+1}T = -s_{i+1}s_iT$ and $\pi_i\pi_{i+1}\pi_iT =\pi_is_{i+1}s_iT$.  Hence it is enough to show that $i$ is a descent in $s_{i+1}s_iT$. By our assumptions, $i$ is an ascent in $T$, $i+1$ is a descent in $T$, and $i+1$ is an ascent in $s_iT$, which together imply that in $\rw(T)$, $i$ appears first, followed by $i+2$, followed by $i+1$. Therefore $i$ appears right of $i+1$ in $\rw(s_{i+1}s_iT)$, as required.

\noindent
(3)(c): Here we have $\pi_i\pi_{i+1}\pi_iT =\pi_i\pi_{i+1}s_iT$ and $\pi_{i+1}\pi_i\pi_{i+1}T =  \pi_{i+1}\pi_is_{i+1}T$. So $i$ appears first in $\rw(T)$, followed by $i+1$, followed by $i+2$, and the ascents $i$ and $i+1$ are both nonattacking in $T$. 

Moreover, $i+1$ is an ascent in $s_iT$ and $i$ is an ascent in $s_{i+1}T$, and the boxes occupied by $i, i+1$ in $s_{i+1}T$ are exactly the boxes occupied by $i+1, i+2$ in $s_iT$. Hence by ascent-compatibility, either $\pi_{i+1}s_iT = 0 = \pi_is_{i+1}T$ or $\pi_{i+1}s_{i}T=s_{i+1}s_{i}T$ and $\pi_is_{i+1}T=s_is_{i+1}T$.

In the former case, we are done. In the latter case, $i$ and $i+1$ occupy the same boxes in $s_{i+1}s_iT$ as $i+1$ and $i+2$ occupy in $T$. Since this ascent is nonattacking in $T$, by ascent-compatibility we have $\pi_is_{i+1}s_{i}T = s_{i}s_{i+1}s_{i}T$. Similarly, by ascent-compatibility we have $\pi_{i+1}s_is_{i+1}T =  s_{i+1}s_is_{i+1}T$. Therefore $\pi_i\pi_{i+1}\pi_iT = s_{i}s_{i+1}s_{i}T$ and $\pi_{i+1}\pi_i\pi_{i+1}T = s_{i+1}s_is_{i+1}T$. These are the same since $s_{i+1}s_is_{i+1}=s_{i}s_{i+1}s_{i}$.
\end{proof}

For $\StdTab(D)$ ascent-compatible, we now define a $HCl_n(0)$-supermodule $\widetilde{{\bf N}}_{\StdTab(D)}$ as the module induced from the $H_n(0)$-module ${\bf N}_{\StdTab(D)}$, i.e.,
\[\widetilde{{\bf N}}_{\StdTab(D)} = {\rm Ind}_{H_n(0)}^{HCl_n(0)}{\bf N}_{\StdTab(D)}.\] 
A basis of $\widetilde{{\bf N}}_{\StdTab(D)}$ is $\{c_XT : X\subseteq [n], T\in \StdTab(D)\}$.

\subsection{Peak and quasisymmetric characteristics of diagram modules}
For the remainder of this section, we assume $\StdTab(D)$ is ascent-compatible unless otherwise stated. Towards computing the peak characteristic of $\widetilde{{\bf N}}_{\StdTab(D)}$ in terms of $\StdTab(D)$, we first establish precise formulas for the images of the basis elements of $\widetilde{{\bf N}}_{\StdTab(D)}$ under each $\pi_i$.

\begin{proposition}\label{prop:Dcrossrelations}
Let $T\in \StdTab(D)$ and $X\subseteq [n]$. For each $1\le i \le n-1$, the following formulas compute $\pi_i(c_XT)$ in $\widetilde{{\bf N}}_{\StdTab(D)}$.

If $i\in \Des(T)$, we have 
\begin{equation}\label{eqn:descent}\pi_i(c_XT) = 
\begin{cases} -c_XT & \mbox{ if } i, i+1 \notin X \\
		      -c_{(X\setminus \{i\})\cup \{i+1\}}T & \mbox{ if } i\in X, i+1\notin X \\
		      -c_XT & \mbox{ if } i\notin X, i+1\in X \\
		      c_{X\setminus \{i,i+1\}}T & \mbox{ if } i, i+1 \in X.
\end{cases}
\end{equation}

If $i\notin \Des(T)$ is attacking, we have 
\begin{equation}\label{eqn:ascent0}
\pi_i(c_XT) = 
\begin{cases} 0 & \mbox{ if } i, i+1 \notin X \\
		      0 & \mbox{ if } i\in X, i+1\notin X \\
		      -c_XT+c_{(X\setminus \{i+1\})\cup \{i\}}T & \mbox{ if } i\notin X, i+1\in X \\
		      -c_XT+c_{X\setminus \{i,i+1\}}T & \mbox{ if } i, i+1 \in X.
\end{cases}
\end{equation}

If $i\notin \Des(T)$ is nonattacking, we have 
\begin{equation}\label{eqn:ascentswap}
\pi_i(c_XT) = 
\begin{cases} c_Xs_iT & \mbox{ if } i, i+1 \notin X \\
		      c_{(X\setminus \{i\})\cup \{i+1\}}s_iT & \mbox{ if } i\in X, i+1\notin X \\
		      -c_XT +  c_{(X\setminus \{i+1\})\cup \{i\}}T +  c_{(X\setminus \{i+1\})\cup \{i\}}s_iT & \mbox{ if } i\notin X, i+1\in X \\
		      -c_XT + c_{X\setminus \{i,i+1\}}T - c_Xs_iT & \mbox{ if } i, i+1 \in X.
\end{cases}
\end{equation}
\end{proposition}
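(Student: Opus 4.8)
The plan is to verify the three cases by direct computation, in each case starting from the known action of $\pi_i$ on the induced module $\mathbf{M}_\alpha$ and adapting it to $\widetilde{\mathbf N}_{\StdTab(D)}$. Recall that $\widetilde{\mathbf N}_{\StdTab(D)} = \mathrm{Ind}_{H_n(0)}^{HCl_n(0)}\mathbf N_{\StdTab(D)}$, so as a vector space it is $Cl_n \otimes_{\C} \mathbf N_{\StdTab(D)}$ with basis $\{c_X T\}$, and the cross-relations \eqref{eqn:crossrelations} govern how $\pi_i$ moves past the Clifford generators. The first step is to record the general principle: writing $\pi_i(c_X T)$ requires pushing $\pi_i$ to the right past $c_X$ using \eqref{eqn:crossrelations}, which produces correction terms involving $c_i, c_{i+1}$ and a leftover $\pi_i$ acting on $T$, and then applying the $H_n(0)$-action \eqref{eqn:0Hecke}. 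The key observation is that the purely-Clifford bookkeeping — how $\pi_i$ interacts with $c_X$ depending on whether $i, i+1 \in X$ — is \emph{identical} to the computation already carried out in \cite{BHT} that produced \eqref{eqn:MIdescent} and \eqref{eqn:MIascent}; the only difference is what $\pi_i$ does to the underlying module element.

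Concretely, for the descent case $i \in \Des(T)$, the action $\pi_i T = -T$ matches exactly the action $\pi_i v_\alpha = -v_\alpha$ for $i \in \Des(\alpha)$ in \eqref{eqn:irreps}, so formula \eqref{eqn:descent} is obtained verbatim from \eqref{eqn:MIdescent} by substituting $T$ for $v_\alpha$. For the attacking-ascent case $i \notin \Des(T)$ attacking, the action $\pi_i T = 0$ matches $\pi_i v_\alpha = 0$ for $i \notin \Des(\alpha)$ in \eqref{eqn:irreps}, so formula \eqref{eqn:ascent0} is obtained verbatim from \eqref{eqn:MIascent} by the same substitution. Thus the first two cases require essentially no new work beyond citing the derivation in \cite{BHT}; I would state this explicitly and move on.

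The real content is the nonattacking-ascent case, where $\pi_i T = s_i T$ rather than $0$, so \eqref{eqn:MIascent} no longer applies directly. Here I would redo the cross-relation computation from scratch. In each of the four subcases (according to the membership of $i, i+1$ in $X$), I push $\pi_i$ past $c_X$ using \eqref{eqn:crossrelations}: when neither $i$ nor $i+1$ lies in $X$, $\pi_i$ commutes with $c_X$ and we simply get $c_X(\pi_i T) = c_X s_i T$; when $i \in X, i+1 \notin X$, the relation $\pi_i c_i = c_{i+1}\pi_i$ converts $c_i$ to $c_{i+1}$ (with the remaining $c_j$, $j \ne i, i+1$, commuting through), giving $c_{(X\setminus\{i\})\cup\{i+1\}}s_i T$; when $i \notin X, i+1 \in X$, the relation $(\pi_i+1)c_{i+1} = c_i(\pi_i+1)$, i.e. $\pi_i c_{i+1} = c_i \pi_i + c_i - c_{i+1}$, produces three terms, and applying $\pi_i T = s_i T$ to the first yields the three-term answer in \eqref{eqn:ascentswap}; when both $i, i+1 \in X$, one combines both relations on the adjacent pair $c_i c_{i+1}$, and the sign $-1$ appearing in front of $c_X s_i T$ comes from the anticommutation $c_i c_{i+1} = -c_{i+1} c_i$ needed to reorder indices after the moves. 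Throughout, I must be careful that all resulting $c_Y$ are written with indices in increasing order, picking up signs from \eqref{eqn:crossrelations} (the Clifford relations) as needed; the $-1$ in the last subcase is exactly such a sign.

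The main obstacle, and the step deserving the most care, is the fourth subcase of the nonattacking instance: tracking the interaction of $\pi_i$ with the \emph{pair} $c_i c_{i+1}$ simultaneously, keeping the Clifford signs straight, and confirming that the leftover term $\pi_i T = s_i T$ enters with the correct sign to yield $-c_X T + c_{X\setminus\{i,i+1\}}T - c_X s_i T$. I would verify this by an explicit small computation: write $c_X = c_A c_i c_{i+1} c_B$ with $A = \{x \in X : x < i\}$ and $B = \{x \in X : x > i+1\}$, commute $\pi_i$ through $c_B$ freely, handle $c_i c_{i+1}$ using the relations, and commute back through $c_A$. One can cross-check the final formulas against \eqref{eqn:MIascent}: formally setting $s_i T = 0$ in \eqref{eqn:ascentswap} must recover \eqref{eqn:MIascent}, which it does, providing a useful consistency check on signs.
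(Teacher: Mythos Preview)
Your proposal is correct and follows essentially the same approach as the paper: push $\pi_i$ past $c_X$ using the cross-relations \eqref{eqn:crossrelations}, obtain an expression involving $\pi_iT$, and then apply the $H_n(0)$-action \eqref{eqn:0Hecke}. The only difference is organizational: the paper carries out the Clifford manipulation once, uniformly in the four subcases for membership of $i,i+1$ in $X$, obtaining formulas of the form (Clifford word)$\cdot\pi_iT$ plus lower-order terms, and only at the end specializes $\pi_iT$ to $-T$, $0$, or $s_iT$; you instead split first on the value of $\pi_iT$, citing \eqref{eqn:MIdescent} and \eqref{eqn:MIascent} for the descent and attacking cases and redoing the computation only for the nonattacking case. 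Both are fine; the paper's ordering avoids repeating (or citing) the same Clifford bookkeeping three times, while your ordering makes the dependence on \cite{BHT} explicit and isolates the genuinely new subcase. Your consistency check at the end is a nice addition not present in the paper.
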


\begin{proof}
By (\ref{eqn:crossrelations}), $\pi_i$ commutes with $c_j$ whenever $j\neq i, i+1$ and thus we only need to consider whether $i$, $i+1$ are in $X$ or not. 

If $i,i+1 \notin X$ then $\pi_i(c_XT) = c_X\pi_iT$, so the first line of each of (\ref{eqn:descent}), (\ref{eqn:ascent0}) and (\ref{eqn:ascentswap}) follows from (\ref{eqn:0Hecke}).

Let $p_i(X)$ denote the number of elements of $X$ that are strictly smaller than $i$. 
If $i \in X$ and $i+1 \notin X$ then
\begin{align*}
\pi_ic_XT = (-1)^{p_i(X)}\pi_ic_ic_{X\setminus \{i\}}T & = (-1)^{p_i(X)}c_{i+1}\pi_ic_{X\setminus \{i\}}T \\ & = (-1)^{p_i(X)}c_{i+1}c_{X\setminus \{i\}}\pi_iT =  c_{(X\setminus \{i\})\cup \{i+1\}}\pi_iT,
\end{align*}
where the last equality is due to the fact that, in this case, $p_{i+1}(X\setminus \{i\}) = p_i(X)$. The second line of each of (\ref{eqn:descent}), (\ref{eqn:ascent0}) and (\ref{eqn:ascentswap}) now follows from (\ref{eqn:0Hecke}).

If $i \notin X$ and $i+1 \in X$ then 
\begin{align*}
\pi_i(c_XT) & = (-1)^{p_{i+1}(X)}\pi_ic_{i+1}c_{X\setminus \{i+1\}}T  \\ 
                  & = (-1)^{p_{i+1}(X)}(c_i\pi_i + c_i -c_{i+1})c_{X\setminus \{i+1\}}T  \\ 
	 	 & = (-1)^{p_{i+1}(X)}(c_i\pi_i(c_{X\setminus \{i+1\}}T) + c_ic_{X\setminus \{i+1\}}T -c_{i+1}c_{X\setminus \{i+1\}}T) \\
		 & = c_{(X\setminus \{i+1\})\cup\{i\}}\pi_iT + c_{(X\setminus \{i+1\})\cup\{i\}}T  - c_{X}T,  
\end{align*}
where the last equality is due to the fact that in this case, $p_{i}(X\setminus \{i+1\}) = p_{i+1}(X)$. The third line of each of (\ref{eqn:descent}), (\ref{eqn:ascent0}) and (\ref{eqn:ascentswap}) now follows from (\ref{eqn:0Hecke}). In particular, for (\ref{eqn:descent}), if $\pi_iT=-T$ the first and second terms in the above expression cancel.

If $i , i+1 \in X$ then $p_i(X) = p_{i+1}(X\setminus \{i\})$ and $p_i(X\setminus\{i\}) = p_{i+1}(X\setminus \{i,i+1\})$, and 
\begin{align*}
\pi_i(c_XT) & = (-1)^{p_i(X)}(-1)^{p_{i+1}(X\setminus \{i\})}\pi_ic_ic_{i+1}c_{X\setminus \{i,i+1\}}T  \\ 
                  & = c_{i+1}(c_i\pi_i+c_i-c_{i+1})c_{X\setminus \{i,i+1\}}T \\
                  & = (-c_ic_{i+1}\pi_i-c_ic_{i+1}+1)c_{X\setminus \{i,i+1\}}T \\
                  & = -c_{X}\pi_iT-c_XT+c_{X\setminus \{i,i+1\}}T. 
        \end{align*}
The fourth line of each of (\ref{eqn:descent}), (\ref{eqn:ascent0}) and (\ref{eqn:ascentswap}) now follows from (\ref{eqn:0Hecke}). In particular, for (\ref{eqn:descent}), if $\pi_iT=-T$ the first and second terms in the above expression cancel.
\end{proof}

We note that relations equivalent to these are also observed in \cite{Li} for the $HCl_n(0)$-supermodules induced from projective covers of simple $H_n(0)$-modules.

\begin{remark}
The supermodule structure of $\widetilde{{\bf N}}_{\StdTab(D)}$ is given by letting the degree $0$ part of $\widetilde{{\bf N}}_{\StdTab(D)}$ be generated by those $c_XT$ with $|X|$ even and the degree $1$ part be generated by those $c_XT$ with $|X|$ odd. As can be seen from Proposition~\ref{prop:Dcrossrelations}, the degrees of the basis elements appearing in $\pi_i(c_XT)$ are equal to the degree of $c_XT$.
\end{remark}

We can represent a basis element $c_XT$ of $\widetilde{{\bf N}}_{\StdTab(D)}$ in tableau form by marking the entries of $T$ that are in $X$; see Example~\ref{ex:0HC} below.

\begin{example}\label{ex:0HC}
As in Example~\ref{ex:0H}, let $D = \tableau{  {\ } & {\ } \\ & {\ } }\, $, with reading order defined by reading the lower box first and then the higher boxes from left to right, and let
\[\StdTab(D) = \left\{ R = \tableau{ 1 & 3 \\ & 2 } \, , \,\,\, S = \tableau{ 2 & 3 \\ & 1 } \, ,\,\,\, T = \tableau{ 3 & 2 \\ & 1 } \right\}.\] 
The basis elements of $\widetilde{{\bf N}}_{\StdTab(D)}$ are 
\[\begin{array}{c @{\hskip2\cellsize}c@{\hskip2\cellsize}c@{\hskip2\cellsize}c@{\hskip2\cellsize}c@{\hskip2\cellsize}c@{\hskip2\cellsize}c@{\hskip2\cellsize}c}
\tableau{ 1 & 3 \\ & 2 } & \tableau{  1' & 3 \\ & 2 } & \tableau{ 1 & 3 \\ & 2' } & \tableau{ 1 & 3' \\ & 2 } & \tableau{  1' & 3 \\ & 2' } & \tableau{ 1' & 3'  \\ & 2 } & \tableau{ 1 & 3' \\ & 2' } & \tableau{ 1' & 3'  \\ & 2' } \\ \\
 \tableau{ 2 & 3  \\ & 1 } &  \tableau{ 2 & 3 \\ & 1' }  &  \tableau{ 2' & 3 \\ & 1 } &  \tableau{ 2 & 3' \\ & 1 } &  \tableau{ 2' & 3 \\ & 1' } &  \tableau{ 2 & 3' \\ & 1' } &  \tableau{ 2' & 3' \\ & 1 } &  \tableau{ 2' & 3' \\ & 1' } \\ \\ 
 \tableau{ 3 & 2 \\ & 1 } & \tableau{ 3 & 2 \\ & 1' } & \tableau{ 3 & 2' \\ & 1 } & \tableau{ 3' & 2 \\ & 1 } & \tableau{ 3 & 2' \\ & 1' } & \tableau{ 3' & 2 \\ & 1' } & \tableau{ 3' & 2' \\ & 1 } & \tableau{ 3' & 2' \\ & 1' } 
\end{array}\]
Using (\ref{eqn:descent}), (\ref{eqn:ascent0}) and (\ref{eqn:ascentswap}) respectively, we can compute, for example,
\begin{align*}
\pi_1(c_{\{1\}}R) & = \pi_1 \, \tableau{ 1' & 3 \\ & 2 } = - \, \tableau{ 1 & 3 \\ & 2' } \\
\pi_1(c_{\{1,2\}}T) & = \pi_1 \, \tableau{ 3 & 2' \\ & 1' } = - \, \tableau{ 3 & 2' \\ & 1' } +  \tableau{ 3 & 2 \\ & 1 }\\
\pi_1(c_{\{2,3\}}S) & = \pi_1 \, \tableau{ 2' & 3' \\ & 1 } = - \, \tableau{ 2' & 3' \\ & 1 } +  \tableau{  2 & 3' \\ & 1' } + \tableau{ 1' & 3' \\ & 2 }.
\end{align*}
\end{example}

Define a relation $\preceq$ on $\StdTab(D)$ by $S\preceq T$ if $S$ can be obtained from $T$ by applying a (possibly empty) sequence of the $\pi_i$ operators.

\begin{lemma}\label{lem:partialorder}
The relation $\preceq$ is a partial order on $\StdTab(D)$.
\end{lemma}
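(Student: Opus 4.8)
The plan is to verify the three defining properties of a partial order—reflexivity, antisymmetry, and transitivity—for the relation $\preceq$ on $\StdTab(D)$. Reflexivity is immediate: applying the empty sequence of operators shows $T \preceq T$ for every $T$. Transitivity is also essentially immediate: if $S \preceq T$ and $R \preceq S$, then concatenating the sequence of $\pi_i$ operators witnessing $R \preceq S$ with the one witnessing $S \preceq T$ shows $R \preceq T$; one should note that whenever $\pi_i$ applied to an element of $\StdTab(D)$ produces a nonzero scalar multiple of an element of $\StdTab(D)$ (which is exactly what is encoded in the relation $S \preceq T$, reading off (\ref{eqn:0Hecke})), the result is again in $\StdTab(D)$, so the composition stays inside $\StdTab(D)$.

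The substance of the lemma is therefore antisymmetry. The plan is to introduce a statistic on $\StdTab(D)$ that is weakly monotone under each $\pi_i$ and strictly monotone whenever $\pi_i$ acts nontrivially by a swap. The natural choice is the number of descents: I will show that if $\pi_i T = s_i T$ (the nonattacking-ascent case), then $\Des(s_i T) \supsetneq$ is not quite right, so more precisely I will compare $|\Des(T)|$ against $|\Des(s_i T)|$. When $i$ is a nonattacking ascent of $T$, swapping $i$ and $i+1$ turns this ascent into a descent, and by the standard fact about how transposing adjacent values $i, i+1$ in a word affects the descent set, the only possible changes to the descent set occur at positions involving the values $i-1, i, i+1$; one checks $i \in \Des(s_i T)$, while membership of $i-1$ (if it exists) can change. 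To get a genuinely monotone statistic I will instead use the \emph{number of inversions} $\inv(\rw(T))$ of the reading word: if $\pi_i T = s_i T$ and $i \notin \Des(T)$, then $i$ appears to the left of $i+1$ in $\rw(T)$, and swapping their values strictly increases the number of inversions of the reading word by exactly the number of entries lying strictly between them in the word whose values lie in $(i, i+1)$—wait, there are none—so in fact $\inv(\rw(s_iT)) = \inv(\rw(T)) + 1$ when $i$ and $i+1$ are adjacent in position, but in general I should argue directly: transposing the \emph{values} $i$ and $i+1$ (which are consecutive integers) in a permutation written in one-line notation changes the inversion count by exactly $\pm 1$, and it increases by $1$ precisely when $i$ precedes $i+1$, i.e.\ precisely when $i$ is an ascent. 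Hence each nontrivial application of a $\pi_i$ (the swap case) strictly increases $\inv(\rw(\cdot))$, while the other cases ($\pi_i T = -T$ or $\pi_i T = 0$) do not move to a different tableau at all.

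With this statistic in hand, antisymmetry follows: suppose $S \preceq T$ and $T \preceq S$ with $S \neq T$. Then there is a sequence of $\pi_i$'s taking $T$ to $S$; since $S \neq T$, this sequence must include at least one genuine swap, so $\inv(\rw(S)) > \inv(\rw(T))$. By the same argument applied to the sequence taking $S$ to $T$, we get $\inv(\rw(T)) > \inv(\rw(S))$, a contradiction. Therefore $S = T$, establishing antisymmetry and completing the proof. I expect the main (though still modest) obstacle to be the bookkeeping in the claim that transposing the consecutive values $i$ and $i+1$ in the one-line notation changes the inversion count by exactly $\pm 1$ with sign governed by whether $i$ is an ascent; this is where the fact that $i$ and $i+1$ are \emph{consecutive integers} is essential, since then no entry can have value strictly between them, so only the relative order of $i$ and $i+1$ themselves contributes to the change.
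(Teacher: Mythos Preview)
Your proposal is correct and follows essentially the same approach as the paper: reflexivity and transitivity are dispatched as immediate, and antisymmetry is proved by observing that each nontrivial application of $\pi_i$ (the nonattacking-ascent swap) strictly increases the number of inversions of the reading word, since swapping the consecutive values $i$ and $i+1$ changes exactly one pair's inversion status. Your false start with $|\Des(T)|$ is unnecessary, but the final argument via $\inv(\rw(T))$ matches the paper's proof exactly.
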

\begin{proof}
It is immediate from the definition that $\preceq$ is reflexive and transitive. For antisymmetry, recall that an \emph{inversion} in a permutation $\gamma$ is a pair of indices $r<s$ such that $\gamma(r)>\gamma(s)$. If $T, s_iT\in \StdTab(D)$, then the permutation $\rw(s_iT)$ has one more inversion than the permutation $\rw(T)$. Hence, if $S\preceq T$ and $S\neq T$, then $\rw(S)$ has strictly more inversions than $\rw(T)$ and therefore $T\npreceq S$. 
\end{proof}

Extend the partial ordering $\preceq$ to a total ordering $\preceq'$ arbitrarily, and suppose the elements of $\StdTab(D) = \{T_1, \ldots , T_m\}$ are ordered
$T_1 \preceq' T_2 \preceq' \cdots \preceq' T_m$. For each $1\le k \le m$, define subspaces $\widetilde{{\bf N}}_k$ of $\widetilde{{\bf N}}_{\StdTab(D)}$ by 
\[\widetilde{{\bf N}}_k = \rm{span}\{c_XT_j : X\subseteq [n], j\le k\}.\] 
Also define $\widetilde{{\bf N}}_0 = \{0\}$. The definition of the ordering $\preceq$, the formulas (\ref{eqn:descent}), (\ref{eqn:ascent0}), (\ref{eqn:ascentswap}), and the fact that $c_j(c_XT_k) = \pm c_{X\triangle \{j\}}T_k$, together imply that $\widetilde{{\bf N}}_{k}$ is a $HCl_n(0)$-sub-supermodule of $\widetilde{{\bf N}}_{\StdTab(D)}$ for each $0 \le k \le m$. We therefore have a filtration of $HCl_n(0)$-supermodules
\[\{0\}=\widetilde{{\bf N}}_0 \subset \widetilde{{\bf N}}_1 \subset \cdots \subset \widetilde{{\bf N}}_m = \widetilde{{\bf N}}_{\StdTab(D)}.\]

Given $1\le k \le m$, the quotient supermodule $\widetilde{{\bf N}}_k/\widetilde{{\bf N}}_{k-1}$ has basis $\{c_XT_k : X\subseteq [n]\}$. 

\begin{lemma}\label{lem:isom}
Let $T_k\in \StdTab(D)$ and let $\comp_n(\Des(T_k))=\alpha$. Then $\widetilde{{\bf N}}_k/\widetilde{{\bf N}}_{k-1}$ is isomorphic to ${\bf M}_\alpha$ as $HCl_n(0)$-supermodules.
\end{lemma}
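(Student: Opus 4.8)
The plan is to exhibit an explicit isomorphism between the quotient supermodule $\widetilde{{\bf N}}_k/\widetilde{{\bf N}}_{k-1}$ and ${\bf M}_\alpha$ by matching up their distinguished bases. Recall that ${\bf M}_\alpha$ has basis $\{c_X v_\alpha : X\subseteq [n]\}$ with $HCl_n(0)$-action given by (\ref{eqn:MIdescent}) and (\ref{eqn:MIascent}), while $\widetilde{{\bf N}}_k/\widetilde{{\bf N}}_{k-1}$ has basis $\{c_X T_k : X\subseteq [n]\}$ (the bar denoting the image in the quotient). I would define the linear map $\varphi: {\bf M}_\alpha \to \widetilde{{\bf N}}_k/\widetilde{{\bf N}}_{k-1}$ by $\varphi(c_X v_\alpha) = c_X T_k$ for all $X\subseteq [n]$; this is visibly a bijection of bases, hence a linear isomorphism, and it respects the $\mathbb{Z}_2$-grading since both $c_X v_\alpha$ and $c_X T_k$ have degree $|X| \bmod 2$. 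It remains to check that $\varphi$ intertwines the two $HCl_n(0)$-actions. Since $\varphi$ manifestly commutes with each $c_j$ (both sides send $c_X \mapsto \pm c_{X\triangle\{j\}}$ with the identical sign, governed by $p_j(X)$), the only thing to verify is that $\varphi(\pi_i(c_X v_\alpha)) = \pi_i(c_X T_k)$ for all $i$ and all $X$.

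The verification is a direct comparison of formulas. Write $\alpha = \comp_n(\Des(T_k))$, so that $i\in\Des(\alpha)$ iff $i\in\Des(T_k)$. When $i\in\Des(T_k)$, the right-hand side $\pi_i(c_X T_k)$ is computed by (\ref{eqn:descent}), which is term-by-term identical to (\ref{eqn:MIdescent}) after replacing $v_\alpha$ by $T_k$; so $\varphi$ intertwines in this case. When $i\notin\Des(T_k)$ and $i$ is an \emph{attacking} ascent, the right-hand side is given by (\ref{eqn:ascent0}), which is likewise term-by-term identical to (\ref{eqn:MIascent}). The remaining case is $i\notin\Des(T_k)$ with $i$ a \emph{nonattacking} ascent: here $\pi_i(c_X T_k)$ is given by (\ref{eqn:ascentswap}), which contains extra terms involving $s_i T_k$ (namely $c_X s_i T_k$, $c_{(X\setminus\{i\})\cup\{i+1\}} s_i T_k$, $c_{(X\setminus\{i+1\})\cup\{i\}} s_i T_k$, and $-c_X s_i T_k$ depending on the membership of $i,i+1$ in $X$). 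The key point is that since $i$ is a nonattacking ascent in $T_k$, we have $s_i T_k \in \StdTab(D)$, and moreover $\rw(s_i T_k)$ has strictly more inversions than $\rw(T_k)$ (as in the proof of Lemma~\ref{lem:partialorder}), so $s_i T_k \prec T_k$; hence $s_i T_k = T_j$ for some $j < k$, and therefore every basis element $c_Y s_i T_k$ lies in $\widetilde{{\bf N}}_{k-1}$ and vanishes in the quotient $\widetilde{{\bf N}}_k/\widetilde{{\bf N}}_{k-1}$. After deleting these terms, (\ref{eqn:ascentswap}) reduces exactly to (\ref{eqn:MIascent}): the first two lines become $0$, and the last two lines become $-c_X T_k + c_{(X\setminus\{i+1\})\cup\{i\}} T_k$ and $-c_X T_k + c_{X\setminus\{i,i+1\}} T_k$ respectively. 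Thus $\varphi(\pi_i(c_X v_\alpha)) = \pi_i(c_X T_k)$ in all cases, and $\varphi$ is an isomorphism of $HCl_n(0)$-supermodules.

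The main obstacle, and the only step requiring care, is confirming that $s_i T_k \prec T_k$ in the nonattacking-ascent case so that the $s_i T_k$-terms genuinely die in the quotient. This follows because $\preceq$ was defined via application of the $\pi_i$ operators and, by (\ref{eqn:0Hecke}), a nonattacking ascent $i$ of $T_k$ gives $\pi_i T_k = s_i T_k$, so $s_i T_k \preceq T_k$ directly; the inversion count from Lemma~\ref{lem:partialorder} then shows $s_i T_k \neq T_k$. Once the chosen total order $\preceq'$ refines $\preceq$, this forces the index $j$ of $s_i T_k$ to satisfy $j < k$, which is exactly what is needed. Everything else is bookkeeping: matching signs in the Clifford action (handled uniformly by the $(-1)^{p_i(X)}$ factors already recorded in the proof of Proposition~\ref{prop:Dcrossrelations}) and reading off that (\ref{eqn:descent})/(\ref{eqn:ascent0}) coincide line-for-line with (\ref{eqn:MIdescent})/(\ref{eqn:MIascent}).
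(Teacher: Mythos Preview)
Your proposal is correct and follows essentially the same approach as the paper: both construct an explicit linear isomorphism matching the bases $\{c_X T_k\}$ and $\{c_X v_\alpha\}$ (yours goes ${\bf M}_\alpha \to \widetilde{{\bf N}}_k/\widetilde{{\bf N}}_{k-1}$, the paper's goes the other way), check degree-preservation and compatibility with each $c_j$, and then verify $\pi_i$-equivariance by comparing (\ref{eqn:descent}) with (\ref{eqn:MIdescent}) when $i\in\Des(T_k)$ and observing that when $i\notin\Des(T_k)$ the $s_iT_k$-terms in (\ref{eqn:ascentswap}) vanish in the quotient so that (\ref{eqn:ascentswap}) reduces to (\ref{eqn:ascent0}), matching (\ref{eqn:MIascent}). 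Your justification that $s_iT_k\prec T_k$ via Lemma~\ref{lem:partialorder} is exactly the reason this works, and the paper uses the same fact (stated more tersely).
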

\begin{proof}
Define a map $\psi:\widetilde{{\bf N}}_k/\widetilde{{\bf N}}_{k-1}\rightarrow {\bf M}_\alpha$ by $\psi(c_XT_k) = c_Xv_\alpha$, and extend by linearity. This map is evidently bijective and degree-preserving. 
To show it is a morphism of $HCl_n(0)$-supermodules, we first observe that for any $1\le j\le n$, we have 
\[\psi(c_jc_XT_k) = \psi((-1)^{p_j(X)} c_{X\triangle \{j\}}T_k) = (-1)^{p_j(X)} c_{X\triangle \{j\}}v_\alpha = c_jc_Xv_\alpha = c_j\psi(c_XT_k),\] 
where we recall $p_j(X)$ is the number of $x\in X$ such that $x<j$. Hence $\psi$ commutes with the action of the generators $c_j$. 

Showing $\psi$ commutes with the action of each $\pi_i$ for $1\le i \le n-1$ is done in two cases. By definition, $\Des(T_k) = \Des(\alpha)$. If $i\in \Des(T_k)$, then to see that $\psi(\pi_i(c_XT_k)) = \pi_i(c_Xv_\alpha)$ is a matter of comparing the cases in (\ref{eqn:descent}) with those in (\ref{eqn:MIdescent}). 

Now suppose $i\notin \Des(T_k)$. Then $\pi_i(c_XT_k)$ is determined by (\ref{eqn:ascent0}) or (\ref{eqn:ascentswap}). But we have $s_iT_k\in \widetilde{{\bf N}}_{k-1}$, and hence equal to zero in the quotient $\widetilde{{\bf N}}_k/\widetilde{{\bf N}}_{k-1}$. Therefore, in $\widetilde{{\bf N}}_k/\widetilde{{\bf N}}_{k-1}$, the equations (\ref{eqn:ascentswap}) become identical to those in (\ref{eqn:ascent0}). Hence for any $i\notin \Des(T_k)$, $\pi_i(c_XT_k)$ is determined by the cases in (\ref{eqn:ascent0}), and then it follows that $\psi(\pi_i(c_XT_k)) = \pi_i(c_Xv_\alpha)$ by comparing the cases in (\ref{eqn:ascent0}) with those in (\ref{eqn:MIascent}).
\end{proof}

Define the \emph{peak set} $\Peak(T)$ of $T\in \StdTab(D)$ to be $\Peak(\Des(T))$. 
Since ${\bf M}_\alpha$ is isomorphic to ${\bf M}_\beta$ as $HCl_n(0)$-supermodules when $\Peak(\alpha) = \Peak(\beta)$ \cite[Theorem 5.5]{BHT}, we have

\begin{lemma}\label{lem:dependonlyonpeak}
Let $T_k\in \StdTab(D)$. Then $\widetilde{{\bf N}}_k/\widetilde{{\bf N}}_{k-1}$ is isomorphic to ${\bf M}_{\comp_n(\Peak(T_k))}$ as $HCl_n(0)$-supermodules.
\end{lemma}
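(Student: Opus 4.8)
The plan is to combine the two immediately preceding lemmas. By Lemma~\ref{lem:isom}, setting $\alpha = \comp_n(\Des(T_k))$, we already know $\widetilde{{\bf N}}_k/\widetilde{{\bf N}}_{k-1} \cong {\bf M}_\alpha$ as $HCl_n(0)$-supermodules. It therefore suffices to show ${\bf M}_\alpha \cong {\bf M}_{\comp_n(\Peak(T_k))}$, and then compose the two isomorphisms.

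For this second step, first I would observe that $\Des(T_k) = \Des(\alpha)$ by definition of $\alpha$, so $\Peak(T_k) = \Peak(\Des(T_k)) = \Peak(\Des(\alpha)) = \Peak(\alpha)$. Let $\beta = \comp_n(\Peak(T_k))$; since $\Peak(T_k)$ is, by definition of the peak set, a subset of $[n-1]$ containing no $1$ and no two consecutive integers, $\beta$ is a peak composition, and moreover $\Des(\beta) = \Peak(T_k)$, whence $\Peak(\beta) = \Peak(\Des(\beta)) = \Peak(\Peak(T_k)) = \Peak(T_k) = \Peak(\alpha)$ (applying $\Peak$ to a set with no consecutive integers and no $1$ returns that same set). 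Thus $\Peak(\alpha) = \Peak(\beta)$, and by \cite[Theorem 5.5]{BHT} we conclude ${\bf M}_\alpha \cong {\bf M}_\beta$ as $HCl_n(0)$-supermodules. Chaining this with Lemma~\ref{lem:isom} gives $\widetilde{{\bf N}}_k/\widetilde{{\bf N}}_{k-1} \cong {\bf M}_\beta = {\bf M}_{\comp_n(\Peak(T_k))}$, as required.

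There is essentially no obstacle here: the statement is a direct corollary of Lemma~\ref{lem:isom} and the cited isomorphism ${\bf M}_\alpha \cong {\bf M}_\beta$ whenever $\Peak(\alpha) = \Peak(\beta)$. The only point requiring a moment's care is the bookkeeping that $\Peak(\comp_n(\Peak(T_k))) = \Peak(T_k)$, i.e. that passing to the peak composition does not change the peak set — this follows because a peak composition $\beta$ satisfies $\Peak(\beta) = \Des(\beta)$, as noted in the background section. Accordingly I expect the proof to be just one or two lines, simply citing the two lemmas and \cite[Theorem 5.5]{BHT}.
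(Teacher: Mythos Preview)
Your proposal is correct and follows exactly the paper's approach: the paper presents this lemma as an immediate consequence of Lemma~\ref{lem:isom} together with \cite[Theorem 5.5]{BHT} (that ${\bf M}_\alpha \cong {\bf M}_\beta$ whenever $\Peak(\alpha) = \Peak(\beta)$), without even writing out a proof block. Your added bookkeeping verifying $\Peak(\comp_n(\Peak(T_k))) = \Peak(T_k)$ is the only thing you make explicit that the paper leaves implicit.
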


We can now determine the peak characteristic of $\widetilde{{\bf N}}_{\StdTab(D)}$ in terms of $\StdTab(D)$.

\begin{theorem}\label{thm:peakchar}
Let $D$ be a diagram in the plane with $n$ boxes and $\StdTab(D)$ an ascent-compatible subset of $\StdFill(D)$. Then 
\[\widetilde{ch}([\widetilde{{\bf N}}_{\StdTab(D)}]) = \sum_{T\in \StdTab(D)} K_{\comp_n(\Peak(T))}.\]
\end{theorem}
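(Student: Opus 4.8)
The plan is to use the $HCl_n(0)$-supermodule filtration
\[\{0\}=\widetilde{{\bf N}}_0 \subset \widetilde{{\bf N}}_1 \subset \cdots \subset \widetilde{{\bf N}}_m = \widetilde{{\bf N}}_{\StdTab(D)}\]
constructed just before the statement, together with the identification of the successive quotients established in Lemma~\ref{lem:dependonlyonpeak}. First I would recall that the peak characteristic $\widetilde{ch}:\mathcal{\widetilde{G}}\rightarrow \Peak$ is an isomorphism of groups (indeed algebras), and in particular is additive on short exact sequences of $HCl_n(0)$-supermodules: for any such sequence $0\to {\bf A}\to {\bf B}\to {\bf C}\to 0$ one has $[{\bf B}]=[{\bf A}]+[{\bf C}]$ in $\mathcal{\widetilde{G}}$, hence $\widetilde{ch}([{\bf B}])=\widetilde{ch}([{\bf A}])+\widetilde{ch}([{\bf C}])$.

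Next I would apply this additivity inductively along the filtration. For each $1\le k\le m$ the short exact sequence $0\to \widetilde{{\bf N}}_{k-1}\to \widetilde{{\bf N}}_{k}\to \widetilde{{\bf N}}_{k}/\widetilde{{\bf N}}_{k-1}\to 0$ gives $[\widetilde{{\bf N}}_k] = [\widetilde{{\bf N}}_{k-1}] + [\widetilde{{\bf N}}_k/\widetilde{{\bf N}}_{k-1}]$ in $\mathcal{\widetilde{G}}$, so telescoping from $k=1$ to $k=m$ (using $\widetilde{{\bf N}}_0=\{0\}$) yields
\[[\widetilde{{\bf N}}_{\StdTab(D)}] = \sum_{k=1}^m [\widetilde{{\bf N}}_k/\widetilde{{\bf N}}_{k-1}].\]
By Lemma~\ref{lem:dependonlyonpeak}, $\widetilde{{\bf N}}_k/\widetilde{{\bf N}}_{k-1}\cong {\bf M}_{\comp_n(\Peak(T_k))}$, so applying $\widetilde{ch}$ and then the formula \eqref{eqn:peakcharM}, namely $\widetilde{ch}([{\bf M}_\beta]) = K_{\comp_n(\Peak(\beta))}$ with $\beta = \comp_n(\Peak(T_k))$ (for which $\Peak(\beta)=\Peak(T_k)$ since $\comp_n(\Peak(T_k))$ is a peak composition), gives $\widetilde{ch}([\widetilde{{\bf N}}_k/\widetilde{{\bf N}}_{k-1}]) = K_{\comp_n(\Peak(T_k))}$. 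Summing over $k$ and reindexing the sum by $T\in\StdTab(D)$ produces exactly $\sum_{T\in\StdTab(D)} K_{\comp_n(\Peak(T))}$, as claimed.

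There is no serious obstacle here: the theorem is essentially a bookkeeping consequence of the filtration and Lemma~\ref{lem:dependonlyonpeak}, and the only points requiring care are (i) verifying that the subspaces $\widetilde{{\bf N}}_k$ really are sub-supermodules — but this was already noted in the text just before the statement, using the formulas \eqref{eqn:descent}, \eqref{eqn:ascent0}, \eqref{eqn:ascentswap}, the definition of $\preceq$, and the action of the $c_j$ — and (ii) correctly invoking that $\comp_n(\Peak(T_k))$ is a peak composition so that the argument of $K$ in \eqref{eqn:peakcharM} simplifies as stated. The mild subtlety worth a sentence in the write-up is that $\preceq$ being a genuine partial order (Lemma~\ref{lem:partialorder}) is what guarantees the linear extension $\preceq'$ exists and that the filtration is well-defined; beyond that the proof is a one-line application of additivity of $\widetilde{ch}$ on the associated graded.
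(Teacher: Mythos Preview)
Your proposal is correct and follows essentially the same approach as the paper: use the filtration $\widetilde{{\bf N}}_0\subset\cdots\subset\widetilde{{\bf N}}_m$, identify each successive quotient with ${\bf M}_{\comp_n(\Peak(T_k))}$ via Lemma~\ref{lem:dependonlyonpeak}, and apply additivity of $\widetilde{ch}$ together with \eqref{eqn:peakcharM}. Your write-up is simply more explicit about the telescoping and the fact that $\comp_n(\Peak(T_k))$ is already a peak composition, but the argument is the same.
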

\begin{proof}
By Lemma~\ref{lem:dependonlyonpeak} and (\ref{eqn:peakcharM}) we have 
\begin{align*} \widetilde{ch}([\widetilde{{\bf N}}_{\StdTab(D)}]) & = \sum_{k=1}^m \widetilde{ch}([\widetilde{{\bf N}}_k/\widetilde{{\bf N}}_{k-1}])  \\ & = \sum_{k=1}^m \widetilde{ch}([{\bf M}_{\comp_n(\Peak(T_k))}]) = \sum_{T\in \StdTab(D)}K_{\comp_n(\Peak(T))}. \qedhere
\end{align*}
\end{proof}

One may similarly obtain a formula for the quasisymmetric characteristics of the $H_n(0)$-modules ${\bf N}_{\StdTab(D)}$ in terms of $\StdTab(D)$.
 
\begin{theorem}\label{thm:char}
Let $D$ be a diagram in the plane with $n$ boxes and $\StdTab(D)$ an ascent-compatible subset of $\StdFill(D)$. Then
\[ch([{\bf N}_{\StdTab(D)}]) = \sum_{T\in \StdTab(D)}F_{\comp_n(\Des(T))}.\]
\end{theorem}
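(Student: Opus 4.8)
The plan is to mirror the argument for Theorem~\ref{thm:peakchar}, but one level lower: instead of inducing up to $HCl_n(0)$, we work directly with the $H_n(0)$-module ${\bf N}_{\StdTab(D)}$, filter it by the same total order $\preceq'$ refining the partial order $\preceq$ from Lemma~\ref{lem:partialorder}, and identify each subquotient with a simple module ${\bf F}_\alpha$. First I would set ${\bf N}_k = \mathrm{span}\{T_j : j\le k\}$ for $T_1\preceq' T_2\preceq'\cdots\preceq' T_m$, with ${\bf N}_0=\{0\}$, and observe that each ${\bf N}_k$ is an $H_n(0)$-submodule of ${\bf N}_{\StdTab(D)}$: this is immediate from the defining action (\ref{eqn:0Hecke}), since $\pi_iT_k$ is either $\pm T_k$, or $0$, or $s_iT_k$, and in the last case $s_iT_k\prec T_k$ (its reading word has strictly more inversions, exactly as in the proof of Lemma~\ref{lem:partialorder}), hence $s_iT_k\in{\bf N}_{k-1}$. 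This yields a filtration $\{0\}={\bf N}_0\subset{\bf N}_1\subset\cdots\subset{\bf N}_m={\bf N}_{\StdTab(D)}$ of $H_n(0)$-modules.

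Next I would show that for each $k$, the one-dimensional quotient ${\bf N}_k/{\bf N}_{k-1}$, spanned by the image of $T_k$, is isomorphic as an $H_n(0)$-module to ${\bf F}_{\alpha}$ where $\alpha=\comp_n(\Des(T_k))$. Define $\psi:{\bf N}_k/{\bf N}_{k-1}\to{\bf F}_\alpha$ by $\psi(T_k)=v_\alpha$; this is clearly a linear isomorphism. To check it intertwines the $\pi_i$, compare (\ref{eqn:0Hecke}) with (\ref{eqn:irreps}): if $i\in\Des(T_k)=\Des(\alpha)$ then $\pi_iT_k=-T_k$ maps to $-v_\alpha=\pi_iv_\alpha$; if $i\notin\Des(T_k)$, then either $i$ is attacking, so $\pi_iT_k=0\mapsto 0=\pi_iv_\alpha$, or $i$ is nonattacking, so $\pi_iT_k=s_iT_k\in{\bf N}_{k-1}$, which is $0$ in the quotient, again matching $\pi_iv_\alpha=0$. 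Hence ${\bf N}_k/{\bf N}_{k-1}\cong{\bf F}_{\comp_n(\Des(T_k))}$.

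Finally I would conclude by additivity of the characteristic $ch$ on short exact sequences: since $[{\bf N}_{\StdTab(D)}]=\sum_{k=1}^m[{\bf N}_k/{\bf N}_{k-1}]$ in the Grothendieck group $\mathcal{G}$, applying $ch$ and using (\ref{eqn:qsymchar}) gives $ch([{\bf N}_{\StdTab(D)}])=\sum_{k=1}^m ch([{\bf F}_{\comp_n(\Des(T_k))}])=\sum_{k=1}^m F_{\comp_n(\Des(T_k))}=\sum_{T\in\StdTab(D)}F_{\comp_n(\Des(T))}$, as desired. I do not anticipate a genuine obstacle here; the only point requiring a little care is the submodule claim, i.e.\ verifying that the nonattacking-ascent case always lands in the span of strictly-lower tableaux, but this is precisely the inversion-counting observation already recorded in Lemma~\ref{lem:partialorder}. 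Everything else is a direct specialization of the $HCl_n(0)$ argument (Lemmas~\ref{lem:isom} and~\ref{lem:dependonlyonpeak} and Theorem~\ref{thm:peakchar}) with the Clifford generators stripped away.
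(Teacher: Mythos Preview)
Your proposal is correct and follows essentially the same approach as the paper: define the filtration ${\bf N}_k=\mathrm{span}\{T_j:j\le k\}$ using the total order $\preceq'$, identify each one-dimensional subquotient with the simple module ${\bf F}_{\comp_n(\Des(T_k))}$ via (\ref{eqn:irreps}), and sum the characteristics using (\ref{eqn:qsymchar}). If anything, you have spelled out the submodule verification and the isomorphism check more carefully than the paper does.
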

\begin{proof}
Define a filtration of ${\bf N}_{\StdTab(D)}$ by
\[0={\bf N}_0 \subset {\bf N}_1 \subset \cdots \subset {\bf N}_m = {\bf N}_{\StdTab(D)},\]
where ${\bf N}_k = {\rm span}\{T_j : j \le k\}$. 
The successive quotients ${\bf N}_k/{\bf N}_{k-1}$ are one-dimensional, spanned by $T_k$, and satisfy 
\[\pi_iT_k = \begin{cases} -T_k & \mbox{ if } i\in \Des(T_k) \\
					0 & \mbox{ if } i\notin \Des(T_k)
\end{cases}\] 
Therefore ${\bf N}_k/{\bf N}_{k-1}$ is isomorphic to ${\bf F}_{\comp_n(\Des(T_k))}$ as $H_n(0)$-modules (see (\ref{eqn:irreps})), and the statement follows from (\ref{eqn:qsymchar}). 
\end{proof}

\begin{remark}
There is a surjective Hopf algebra homomorphism $\theta:\QSym \rightarrow \PQSym$ defined by $\theta(F_{\Des(\alpha)}) = K_{\Peak(\alpha)}$, originally introduced by Stembridge \cite[Theorem 3.1]{Stembridge:enriched}. Under this homomorphism, the image of the quasisymmetric characteristic $ch([{\bf N}_{\StdTab(D)}])$ is precisely the peak characteristic $\widetilde{ch}([\widetilde{{\bf N}}_{\StdTab(D)}])$.
\end{remark}

As mentioned in the introduction, an important motivation for the diagram modules paradigm arises from the fact that many important families of functions in $\Sym$, $\QSym$ and $\PQSym$ are defined as sums of fundamental quasisymmetric (or peak) functions associated to descent (or peak) sets of tableaux of varying shapes. Therefore, to obtain these functions as quasisymmetric or peak characteristics of diagram (super)modules, one needs only to check ascent-compatibility for those tableau families.  For all such families of tableaux that we are aware of in the context of $H_n(0)$-modules or $HCl_n(0)$-supermodules, the conditions for when exchanging entries $i$ and $i+1$ yields another tableau in the family depend only on the position (in $D$) of the box containing $i$ relative to the position of the box containing $i+1$ (e.g., weakly left of, strictly below, etc.).  
Hence the following lemma, although immediate from the definition of ascent-compatibility, will prove useful in the upcoming sections. 

\begin{lemma}\label{lem:position}
Let $D$ be a diagram and $\StdTab(D)$ a subset of $\StdFill(D)$. If the attacking/nonattacking status of ascents in elements of $\StdTab(D)$ is determined only by the relative position of the two boxes of $D$ associated to those ascents, then $\StdTab(D)$ is ascent-compatible.
\end{lemma}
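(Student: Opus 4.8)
The plan is to unwind the definition of ascent-compatibility (Definition~\ref{def:ascentcompatible}) and observe that the hypothesis is exactly what is needed to verify it. First I would fix two standard tableaux $T, T' \in \StdTab(D)$ that both have an ascent in the same positions $(r,s)$; by the definition of ``ascent in positions $(r,s)$'', this means $\rw(T)_s - \rw(T)_r = 1$ and $\rw(T')_s - \rw(T')_r = 1$, so in both tableaux the entries occupying the $r$th and $s$th boxes of $D$ (in reading order) are a consecutive pair $i, i+1$ (with $i+1$ in the $s$th box in each case, though the actual value of $i$ may differ between $T$ and $T'$). The key point is that in $T$ the two boxes involved in this ascent are precisely the $r$th and $s$th boxes of $D$ in reading order, and likewise in $T'$ — so the \emph{pair of boxes} of $D$ associated to the ascent is the same for $T$ and for $T'$, and hence their relative position in $D$ is the same.

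Next I would invoke the hypothesis: the attacking/nonattacking status of an ascent is determined only by the relative position of the two boxes of $D$ associated to that ascent. Since $T$ and $T'$ have their ascent in positions $(r,s)$ associated to the same pair of boxes, which have a fixed relative position, the hypothesis forces the ascent to be attacking in $T$ if and only if it is attacking in $T'$. This is exactly the conclusion required by Definition~\ref{def:ascentcompatible}, so $\StdTab(D)$ is ascent-compatible.

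There is essentially no obstacle here; as the paper itself remarks, the lemma is immediate from the definitions. The only mild subtlety worth spelling out is the translation between ``an ascent $i$ in positions $(r,s)$'' (a statement about $\rw(T)$) and ``the pair of boxes of $D$ carrying $i$ and $i+1$'' (a statement about $T$ as a filling of $D$): because $\rw(T)$ is obtained by listing the entries of $T$ in the fixed reading order on $D$, the index $r$ is the rank in reading order of the box containing $i$, and $s$ is the rank of the box containing $i+1$; thus $(r,s)$ determines, and is determined by, the ordered pair of boxes, independently of which tableau we are in. Once this identification is made, the lemma is a one-line consequence of the hypothesis.

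\begin{proof}
Suppose the attacking/nonattacking status of ascents in elements of $\StdTab(D)$ is determined only by the relative position of the two boxes of $D$ associated to those ascents. Let $T, T' \in \StdTab(D)$ both have an ascent in positions $(r,s)$. By definition this means $r < s$ and $\rw(T)_s - \rw(T)_r = 1$, and likewise $\rw(T')_s - \rw(T')_r = 1$. Since $\rw(T)$ lists the entries of $T$ in the fixed reading order on $D$, the entry $\rw(T)_r$ lies in the $r$th box of $D$ in reading order and $\rw(T)_r + 1 = \rw(T)_s$ lies in the $s$th box of $D$ in reading order; the same holds for $T'$. Hence the (ordered) pair of boxes of $D$ associated to this ascent is the $r$th and $s$th boxes of $D$ in reading order, independently of whether we consider the ascent in $T$ or in $T'$. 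In particular, the two boxes of $D$ associated to the ascent have the same relative position in both cases. By hypothesis, the attacking/nonattacking status of the ascent depends only on this relative position, so the ascent is attacking in $T$ if and only if it is attacking in $T'$. This is precisely the condition in Definition~\ref{def:ascentcompatible}, so $\StdTab(D)$ is ascent-compatible.
\end{proof}
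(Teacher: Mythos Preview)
Your proof is correct and follows essentially the same approach as the paper: both arguments observe that the hypothesis amounts to the attacking/nonattacking status of an ascent in positions $(r,s)$ depending only on $(r,s)$, which is exactly what Definition~\ref{def:ascentcompatible} requires. You have simply spelled out in more detail the identification between the positions $(r,s)$ and the corresponding pair of boxes of $D$, which the paper leaves implicit.
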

\begin{proof}
This is equivalent to the attacking/nonattacking status of an ascent in position $(r,s)$ depending only on $r$ and $s$ (and not on $T\in \StdTab(D)$), so the statement follows from Definition~\ref{def:ascentcompatible}.
\end{proof}

%
\section{$0$-Hecke-Clifford supermodules for quasisymmetric Schur $Q$-functions}\label{sec:TypeCModule}
%

As our first application of the diagram modules framework, we consider a basis of the peak algebra known as the \emph{quasisymmetric Schur $Q$-functions}. Jing and Li introduced these functions in \cite{Jing.Li} and asked whether they can be given a representation-theoretic intepretation in terms of $0$-Hecke-Clifford supermodules. We answer this question affirmatively, and also prove that the supermodules we construct are cyclic, generated by a single tableau.

\subsection{Quasisymmetric Schur $Q$-functions}

The \emph{diagram} of a composition $\alpha$, denoted $D(\alpha)$, is the array of boxes with $\alpha_i$ boxes in row $i$, left-justified. The rows are numbered from bottom to top, e.g., if $\alpha = (4,2,1,3)$ then 
\[D(\alpha) = \tableau{ {\ } & {\ } & {\ } \\ {\ }  \\ {\ }  & {\ } \\ {\ } & {\ } & {\ } & {\ } }.\]

\begin{definition}\label{def:SPCT}
Let $\alpha$ be a peak composition of $n$. A \emph{standard peak composition tableau} of \emph{shape} $\alpha$ \cite{Jing.Li} is a standard filling of $D(\alpha)$ satisfying the following conditions.
\begin{enumerate}[leftmargin=1.8cm]
\item[(SPCT1)] Entries increase from left to right along each row.
\item[(SPCT2)] Entries increase from bottom to top in the first column.
\item[(SPCT3)] For every $1\le k \le n$, the subdiagram of $D(\alpha)$ consisting of the boxes with entries at most $k$ is the diagram of a peak composition.
\end{enumerate}
Let $\SPCT(\alpha)$ denote the set of all standard peak composition tableaux of shape $\alpha$.
\end{definition}

Given $T\in \SPCT(\alpha)$, the reading word $\rw(T)$ of $T$ is defined in \cite{Oguz} to be the word obtained by reading the entries in the columns of $T$ from bottom to top, starting with the leftmost column and proceeding rightwards. It follows that $i \in \Des(T)$ if and only if $i+1$ is either strictly left of $i$ in $T$ or in the same column as $i$ and below $i$.

\begin{example}\label{ex:SPCT}
Let $\alpha = (3,3,1)$. Then the elements of $\SPCT(\alpha)$, together with their descent sets, are
\[\begin{array}{c @{\hskip2\cellsize}c@{\hskip2\cellsize}c@{\hskip2\cellsize}c@{\hskip2\cellsize}c}
 \tableau{ 7 \\ 4 & 5 & 6 \\ 1 & 2 & 3 } & \tableau{ 6 \\ 4 & 5 & 7 \\ 1 & 2 & 3 }  &  \tableau{ 7 \\ 3 & 5 & 6 \\ 1 & 2 & 4 } & \tableau{ 6 \\ 3 & 5 & 7 \\ 1 & 2 & 4 } & \tableau{ 7 \\ 3 & 4 & 6 \\ 1 & 2 & 5 } \\ \\  
 \{3, 6\} & \{3, 5\} & \{2, 4, 6\} & \{2, 4, 5\} & \{2, 6\} \\ \\
 \tableau{ 6 \\ 3 & 4 & 7 \\ 1 & 2 & 5 }  &  \tableau{ 7 \\ 3 & 4 & 5 \\ 1 & 2 & 6 } &  \tableau{ 5 \\ 3 & 4 & 7 \\ 1 & 2 & 6 } & \tableau{ 6 \\ 3 & 4 & 5 \\ 1 & 2 & 7 } & \tableau{ 5 \\ 3 & 4 & 6 \\ 1 & 2 & 7 } \\ \\
  \{2, 5\} & \{2, 5, 6\} & \{2, 4\}& \{2, 5\}& \{2, 4, 6\} 
\end{array}\]
\end{example}

We are now ready to define the quasisymmetric Schur $Q$-functions $\widetilde{Q}_\alpha$. Jing and Li conjectured that these functions expand positively in the peak functions \cite{Jing.Li}, and Kantarc{\i} O{\u g}uz \cite{Oguz} resolved this conjecture in the affirmative, providing the following formula which we will use as our definition.
  
\begin{definition}\label{def:SchurQformula}\cite[Theorem 3.9]{Oguz}
Let $\alpha$ be a peak composition of $n$. Then 
\[\widetilde{Q}_\alpha = \sum_{T\in \SPCT(\alpha)}K_{\comp_n(\Peak(T))}.\]
\end{definition}

\begin{example}\label{ex:TypeCQSchur}
Let $\alpha = (3,3,1)$. Using Example~\ref{ex:SPCT} and noting that $\Peak(\{2,4,5\}) = \{2,4\}$ and $\Peak(\{2,5,6\}) = \{2,5\}$, we have 
\[\widetilde{Q}_{(3,3,1)} = K_{(3,3,1)} +  K_{(3,2,2)} +  2K_{(2,2,2,1)} + 2K_{(2,2,3)} + K_{(2,4,1)} +  3K_{(2,3,2)}.   \]
\end{example}

\subsection{The supermodules}

To construct $HCl_n(0)$-supermodules whose peak characteristics are the quasisymmetric Schur $Q$-functions, by Theorem~\ref{thm:peakchar} and Definition~\ref{def:SchurQformula} it is enough to verify that $\SPCT(\alpha)$ is ascent-compatible.

For any standard tableau $T$, we say an entry $i$ is \emph{immediately right (or left)} of an entry $j$ in $T$ if $i$ and $j$ are in the same row and $i$ is one column to the right (or left) of $j$ (in particular, $i$ and $j$ are adjacent entries in a row). Likewise, $i$ is \emph{immediately above (below)} $j$ if $i$ and $j$ are in the same column and $i$ is one row above (below) $j$.

\begin{lemma}\label{lem:SPCTdescents}
Let $\alpha$ be a peak composition of $n$ and $T\in \SPCT(\alpha)$. Then an ascent $i$ in $T$ is attacking if and only if $i+1$ is immediately right of $i$ in $T$.
\end{lemma}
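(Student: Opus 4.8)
The plan is to characterize, for $T \in \SPCT(\alpha)$, exactly when $s_i T$ lies in $\SPCT(\alpha)$, by analyzing the relative position in $T$ of the box containing $i$ and the box containing $i+1$. Since $i$ is assumed to be an ascent, we know $i+1$ is not immediately below $i$ in the same column and not strictly left of $i$; by the reading-word description recalled before Example~\ref{ex:SPCT}, $i+1$ must therefore be either (a) weakly right of $i$ in a strictly higher row, (b) in the same row as $i$, strictly to its right, or (c) in the same column as $i$, strictly above it (more than one row up). I would go through these cases and determine in each whether swapping $i$ and $i+1$ preserves conditions (SPCT1), (SPCT2), (SPCT3).

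First I would observe that conditions (SPCT1) and (SPCT2) concern only order within rows and within the first column, and that (SPCT3) says every prefix $\{1,\dots,k\}$ occupies a peak-composition shape. Swapping $i$ and $i+1$ changes only the prefix $\{1,\dots,i\}$ (all other prefixes are unaffected, since they either contain both or neither of $i,i+1$), so (SPCT3) can only fail for $k=i$; and (SPCT1),(SPCT2) can only be violated at the boxes of $i$ and $i+1$ and their row/column neighbours. Then I would argue: in case (a), where $i+1$ is weakly right of $i$ and strictly higher, swapping $i$ and $i+1$ is harmless — the relevant row and column inequalities still hold (here one uses that $i$ and $i+1$ are consecutive, so no entry lies strictly between them to obstruct), and the prefix shape after removing $i+1$ instead of $i$ is still a valid peak composition because removing a box that is the last in its row from a composition diagram, versus removing the box of $i$, differ only in row lengths in a way that stays within peak compositions — I would spell this out using (SPCT3) applied to $k=i-1$ and $k=i+1$. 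Similarly in case (c), $i$ directly above-but-not-adjacent to $i+1$: this forces (by (SPCT3) at $k=i$) that the column structure is compatible with the swap, and one checks (SPCT1)/(SPCT2) hold. In case (b), $i+1$ immediately right of $i$ in the same row: here swapping would put $i+1$ immediately left of $i$, violating (SPCT1), so $s_i T \notin \SPCT(\alpha)$ — the ascent is attacking. Conversely, if $i+1$ is \emph{not} immediately right of $i$, we are in case (a) or (c) and $s_i T \in \SPCT(\alpha)$, so the ascent is nonattacking.

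The main obstacle I expect is the careful bookkeeping for condition (SPCT3) in cases (a) and (c): one must verify that the subdiagram of boxes with entries $\le i$ remains a peak-composition diagram after the swap. The cleanest way is to note that the two candidate prefix shapes (before vs.\ after swapping) are obtained from the shape of entries $\le i-1$ by adding the box of $i+1$, respectively the box of $i$, and from the shape of entries $\le i+1$ by deleting one of these two boxes; since both $\{1,\dots,i-1\}$ and $\{1,\dots,i+1\}$ already have peak-composition shape in $T$, and the box of $i+1$ in $T$ is an addable corner of the $\le i$ shape while the box of $i$ is a removable corner, a short case analysis on row lengths (using that a composition is a peak composition iff all parts except possibly the last exceed $1$) shows the swapped prefix shape is again a peak composition precisely when $i+1$ is not immediately right of $i$. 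Once this is established, ascent-compatibility of $\SPCT(\alpha)$ follows immediately from Lemma~\ref{lem:position}, since the attacking/nonattacking status depends only on the relative position of the boxes of $i$ and $i+1$.
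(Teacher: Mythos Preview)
Your overall strategy---check (SPCT1), (SPCT2), (SPCT3) for $s_iT$ by case analysis on the relative position of $i$ and $i+1$---is the same as the paper's, but your case breakdown has a gap. You omit the case where $i+1$ is strictly to the right of $i$ and in a strictly \emph{lower} row; this genuinely occurs (for instance $i=4$, $i+1=5$ in the sixth tableau of Example~\ref{ex:SPCT}). Cases (a) and (c) as you have written them also overlap. The correct starting point is the characterization recalled just before Example~\ref{ex:SPCT}: $i$ is an ascent precisely when $i+1$ is either strictly right of $i$ (in \emph{any} row), or in the same column as $i$ and above it.

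Your handling of (SPCT3) is also more elaborate than necessary. The paper isolates one clean fact that does all the work: when $i$ is an ascent and $i+1$ is not immediately right of $i$, the box of $i+1$ cannot lie in the first column. Indeed, if it did, then $i$ would also have to be in the first column (since $i+1$ is weakly right of $i$), and the subdiagram for entries $1,\ldots,i+1$ in $T$ would contain two distinct rows of length~$1$, contradicting (SPCT3) for $T$ itself. Once you know the box of $i+1$ lies in column $\ge 2$, (SPCT2) for $s_iT$ is immediate, and (SPCT3) follows because appending a box in column $\ge 2$ to a peak-composition diagram always gives another peak-composition diagram. Your proposed ``short case analysis on row lengths'' would presumably reach this conclusion, but it is worth extracting this single observation explicitly rather than leaving it inside a case split; it replaces the separate treatment of your cases (a) and (c) and the missing lower-row case all at once.
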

\begin{proof}
It follows from the reading word that $i$ is an ascent in $T$ if and only if $i+1$ is either strictly right of $i$ in $T$, or above $i$ in the same column of $T$. 

If $i+1$ is immediately right of $i$ in $T$ then $s_iT\notin \SPCT(\alpha)$ by (SPCT1). Conversely, suppose $i$ is an ascent in $T$ and $i+1$ is not immediately right of $i$ in $T$. Then $i$ and $i+1$ cannot be in the same row of $T$ by (SPCT1), and thus (SPCT1) also holds for $s_iT$. Since $i+1$ is weakly right of $i$, the only way $i+1$ could be in the first column of $T$ is if $i$ is also in the first column, but this is impossible: (SPCT3) would then be violated because in the subdiagram corresponding to entries $1, \ldots , i+1$, the row containing $i$ and the row containing $i+1$ would both have length $1$. Hence $i+1$ cannot be in the first column of $T$, and thus (SPCT2) holds for $s_iT$. For (SPCT3), note that the subdiagram corresponding to entries $1, \ldots, k$ is the same for $T$ and $s_iT$ except when $k=i$. So the subdiagram of $s_iT$ corresponding to entries $1, \ldots, i-1$ is that of a peak composition, and then adding the entry $i$ still gives the diagram of a peak composition because the $i$ is placed in the box that contains $i+1$ in $T$, which is not in the first column. Hence $s_iT$ satisfies (SPCT3).
\end{proof}

Lemma~\ref{lem:SPCTdescents} establishes that the attacking/nonattacking status of ascents in $T\in \SPCT(\alpha)$ is determined by the relative position of the boxes involved. Therefore, Lemma~\ref{lem:SPCTascentcompatible} below follows immediately from Lemma~\ref{lem:position}.

\begin{lemma}\label{lem:SPCTascentcompatible}
Let $\alpha$ be a peak composition of $n$. Then $\SPCT(\alpha)$ is ascent-compatible.
\end{lemma}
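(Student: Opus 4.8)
The plan is to deduce Lemma~\ref{lem:SPCTascentcompatible} directly from the two results immediately preceding it. Specifically, Lemma~\ref{lem:SPCTdescents} characterizes, for any $T \in \SPCT(\alpha)$, exactly when an ascent $i$ is attacking: this happens if and only if the box containing $i+1$ is immediately to the right of the box containing $i$. This criterion refers only to the relative position of the two boxes in $D(\alpha)$ — not to the particular tableau $T$ in which the ascent occurs — so it is precisely the hypothesis of Lemma~\ref{lem:position}. Hence invoking Lemma~\ref{lem:position} with $D = D(\alpha)$ and $\StdTab(D) = \SPCT(\alpha)$ yields ascent-compatibility.

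Concretely, I would write: let $\alpha$ be a peak composition of $n$. By Lemma~\ref{lem:SPCTdescents}, for every $T \in \SPCT(\alpha)$ an ascent $i$ of $T$ is attacking if and only if $i+1$ occupies the box immediately to the right of the box occupied by $i$. Thus the attacking/nonattacking status of any ascent in any element of $\SPCT(\alpha)$ depends only on the relative position of the two boxes of $D(\alpha)$ involved. By Lemma~\ref{lem:position}, $\SPCT(\alpha)$ is therefore ascent-compatible.

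Since the surrounding text already states that Lemma~\ref{lem:SPCTascentcompatible} "follows immediately from Lemma~\ref{lem:position}," there is no genuine obstacle here; the only thing to be careful about is phrasing the reduction cleanly, making explicit that "immediately right of" is a condition on the pair of boxes (equivalently, on the pair of positions $(r,s)$ in the reading word, as in the proof of Lemma~\ref{lem:position}) rather than on the entries or on $T$. The proof is a one-line citation of the two prior lemmas.

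\begin{proof}
Let $\alpha$ be a peak composition of $n$. By Lemma~\ref{lem:SPCTdescents}, for any $T\in \SPCT(\alpha)$, an ascent $i$ in $T$ is attacking if and only if $i+1$ is immediately right of $i$ in $T$; that is, if and only if the box of $D(\alpha)$ containing $i+1$ is immediately to the right of the box containing $i$. This condition depends only on the relative position of the two boxes of $D(\alpha)$ associated to the ascent, and not on the choice of $T\in \SPCT(\alpha)$. Hence $\SPCT(\alpha)$ is ascent-compatible by Lemma~\ref{lem:position}.
\end{proof}
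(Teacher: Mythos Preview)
Your proposal is correct and follows exactly the same approach as the paper: the paper itself states that Lemma~\ref{lem:SPCTdescents} shows the attacking/nonattacking status depends only on relative box position, so Lemma~\ref{lem:SPCTascentcompatible} follows immediately from Lemma~\ref{lem:position}. Your written-out version is a clean rendering of precisely this argument.
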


Since $\SPCT(\alpha)$ is ascent-compatible, the diagram modules framework yields $HCl_n(0)$-supermodules $\widetilde{{\bf N}}_{\SPCT(\alpha)}$. Moreover, their peak characteristics are the quasisymmetric Schur $Q$-functions, thus answering the question of Jing and Li \cite{Jing.Li}.

\begin{theorem}\label{thm:QSQF}
Let $\alpha$ be a peak composition of $n$. Then
\[\widetilde{ch}([\widetilde{{\bf N}}_{\SPCT(\alpha)}]) = \widetilde{Q}_\alpha.\]
\end{theorem}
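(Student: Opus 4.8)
The plan is to combine the two results that have already been assembled: Theorem~\ref{thm:peakchar}, which computes the peak characteristic of any diagram supermodule $\widetilde{{\bf N}}_{\StdTab(D)}$ as $\sum_{T\in\StdTab(D)}K_{\comp_n(\Peak(T))}$, and Definition~\ref{def:SchurQformula}, which is exactly Oguz's formula $\widetilde{Q}_\alpha=\sum_{T\in\SPCT(\alpha)}K_{\comp_n(\Peak(T))}$. The only thing standing between these two is the hypothesis of Theorem~\ref{thm:peakchar} that the relevant family of standard fillings be ascent-compatible, and this is precisely the content of Lemma~\ref{lem:SPCTascentcompatible}. So the proof is essentially a one-line deduction.

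Concretely, first I would invoke Lemma~\ref{lem:SPCTascentcompatible} to record that $\SPCT(\alpha)$ is an ascent-compatible subset of $\StdFill(D(\alpha))$ (here the ambient diagram is $D=D(\alpha)$, with the column reading order from bottom to top, left column first, described just before Example~\ref{ex:SPCT}). This legitimizes forming the diagram supermodule $\widetilde{{\bf N}}_{\SPCT(\alpha)}$ and applying the characteristic formula. Second, I would apply Theorem~\ref{thm:peakchar} with $\StdTab(D)=\SPCT(\alpha)$ to obtain
\[
\widetilde{ch}\bigl([\widetilde{{\bf N}}_{\SPCT(\alpha)}]\bigr)=\sum_{T\in\SPCT(\alpha)}K_{\comp_n(\Peak(T))}.
\]
Third, I would observe that the right-hand side is, verbatim, the expression defining $\widetilde{Q}_\alpha$ in Definition~\ref{def:SchurQformula}, so the two sides agree. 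That completes the argument.

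There is essentially no obstacle here: the substantive work has been front-loaded into establishing Lemma~\ref{lem:SPCTdescents} (that an ascent $i$ in $T\in\SPCT(\alpha)$ is attacking exactly when $i+1$ sits immediately to the right of $i$), which feeds Lemma~\ref{lem:position} to give ascent-compatibility, and into the filtration argument proving Theorem~\ref{thm:peakchar}. If I had to name a point requiring care, it would be making sure the reading order fixed on $D(\alpha)$ in Section~\ref{sec:TypeCModule} (the column reading word of \cite{Oguz}) is the same one used when $\Des(T)$ and $\Peak(T)$ are computed in Theorems~\ref{thm:peakchar} and~\ref{thm:char}; since $\Des(T)$ in \eqref{eqn:rwdescent} is defined purely from $\rw(T)$, and the descent description recalled before Example~\ref{ex:SPCT} matches \eqref{eqn:rwdescent} for this reading order, there is no inconsistency. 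Hence the proof is just: apply Lemma~\ref{lem:SPCTascentcompatible}, then Theorem~\ref{thm:peakchar}, then compare with Definition~\ref{def:SchurQformula}.
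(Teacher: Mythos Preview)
Your proposal is correct and matches the paper's own proof essentially verbatim: invoke Lemma~\ref{lem:SPCTascentcompatible} so that Theorem~\ref{thm:peakchar} applies with $D=D(\alpha)$ and $\StdTab(D)=\SPCT(\alpha)$, and then identify the resulting sum with $\widetilde{Q}_\alpha$ via Definition~\ref{def:SchurQformula}. Your added remark about consistency of the reading order is a fine sanity check but not something the paper spells out.
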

\begin{proof}
We have 
\[\widetilde{ch}([\widetilde{{\bf N}}_{\SPCT(\alpha)}]) = \sum_{T\in \SPCT(\alpha)}K_{\comp_n(\Peak(T))} = \widetilde{Q}_\alpha,\] where the first equality follows from Lemma~\ref{lem:SPCTascentcompatible} and Theorem~\ref{thm:peakchar} (letting $D=D(\alpha)$ and $\StdTab(D) = \SPCT(\alpha)$), and the second equality is Definition~\ref{def:SchurQformula}.
\end{proof}

\begin{remark}
Kantarc{\i} O{\u g}uz defines another family of tableaux called \emph{marked standard peak composition tableaux} and uses these to provide a formula for the expansion of $\widetilde{Q}_\alpha$ in the fundamental basis of $\QSym$ \cite[Theorem 3.7]{Oguz}. These tableaux are precisely the basis elements $c_XT$ of the $HCl_n(0)$-supermodule $\widetilde{{\bf N}}_{\SPCT(\alpha)}$ (interpreted as in Example~\ref{ex:0HC}), thus Theorem~\ref{thm:QSQF} and the diagram modules framework also provides a representation-theoretic interpretation of marked standard peak composition tableaux.
\end{remark}

\subsection{Cyclic structure}

Define $\widetilde{{\bf N}}_{\StdTab(D)}$ (or ${\bf N}_{\StdTab(D)}$) to be \emph{tableau-cyclic} if it is generated by a single $T\in \StdTab(D)$. 
We will prove $\widetilde{{\bf N}}_{\SPCT(\alpha)}$ is tableau-cyclic for all peak compositions $\alpha$. This provides a point of contrast with $HCl_n(0)$-supermodules we obtain in the next section that are not tableau-cyclic.

Given a peak composition $\alpha$, define the \emph{source tableau} $T^{\sup}_\alpha$ to be the element of $\SPCT(\alpha)$ whose entries in column $1$ are the first $\ell(\alpha)$ odd numbers, whose entries in column $2$ are the first $\ell(\alpha)-1$ or $\ell(\alpha)$  even numbers (depending on whether or not the last part of $\alpha$ is equal to $1$), and whose entries in subsequent columns are the remaining numbers, increasing consecutively up each column. 

\begin{example}
Let $\alpha = (3,4,3,2)$. Then 
\[T^{\sup}_\alpha = \tableau{ 7 & 8 \\ 5 & 6 & 11  \\ 3 & 4 & 10 & 12 \\ 1 & 2 & 9}.\] 
\end{example}

\begin{lemma}\label{lem:cyclic}
Let $\alpha \vDash n$ and $T\in \SPCT(\alpha)$. Then there is some sequence $\pi_{i_1}\ldots \pi_{i_r}$ where each $\pi_{i_j} \in \{\pi_1, \ldots , \pi_{n-1}\}$ such that $T= \pi_{i_1}\ldots \pi_{i_r}T_\alpha^{\sup}$.
\end{lemma}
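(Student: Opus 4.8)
The plan is to show that from any $T \in \SPCT(\alpha)$ one can repeatedly apply the $\pi_i$ operators to decrease the number of inversions of $\rw(T)$ until reaching $T^{\sup}_\alpha$, which should be characterized as the unique element of $\SPCT(\alpha)$ whose reading word has the minimal number of inversions (equivalently, the unique minimal element in the partial order $\preceq$ from Lemma~\ref{lem:partialorder}). Since $\preceq$ is a partial order with successive covers given by nonattacking-ascent swaps $T \mapsto s_i T$ (each of which strictly decreases inversions, by the proof of Lemma~\ref{lem:partialorder}), it suffices to prove that $T^{\sup}_\alpha$ is the unique $\preceq$-minimal element; then every $T$ is connected to $T^{\sup}_\alpha$ by a downward chain, and reversing that chain gives the desired word $\pi_{i_1}\cdots\pi_{i_r}$. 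Concretely: first verify $T^{\sup}_\alpha \in \SPCT(\alpha)$ (conditions (SPCT1)--(SPCT3) are easy to check from its explicit column description, using that the first column has the odd numbers $1,3,5,\ldots$ and the second column the even numbers, so the subdiagram on $\{1,\ldots,k\}$ is always a peak composition). Second, observe $\rw(T^{\sup}_\alpha) = 1\,2\,3\,\cdots\,n$, the identity permutation, which has zero inversions, so it is $\preceq$-minimal; and any $T$ with $\rw(T) = 12\cdots n$ must equal $T^{\sup}_\alpha$ because the column-reading word being the identity forces the entries to increase consecutively up the columns left-to-right, and then (SPCT2)/(SPCT3) pin down the column lengths as $\ell(\alpha), \ell(\alpha)$-or-$\ell(\alpha)$-minus-one, etc. — matching $T^{\sup}_\alpha$.

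The remaining and main point is the reachability itself: for $T \neq T^{\sup}_\alpha$ I must produce a nonattacking ascent $i$ of $T$ (so that $\pi_i T = s_i T$ has strictly fewer inversions), and argue that iterating terminates at $T^{\sup}_\alpha$. Termination is automatic from the inversion count being a nonnegative integer that strictly decreases; the content is the \emph{existence} of such an $i$ whenever $T \neq T^{\sup}_\alpha$. Here is where I would do the real work: since $\rw(T) \neq 12\cdots n$, there is some $i$ with $i+1$ appearing to the left of $i$ in $\rw(T)$ — but that is a \emph{descent}, the wrong direction. Instead I want an ascent $i$ (i.e. $i+1$ strictly right of $i$, or above $i$ in the same column, per Lemma~\ref{lem:SPCTdescents}'s reading-word analysis) that is moreover nonattacking, i.e. $i+1$ is \emph{not} immediately right of $i$. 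So the key combinatorial claim is: if $\rw(T) \neq 12\cdots n$, then $T$ has at least one ascent $i$ such that $i+1$ is not immediately right of $i$ in $T$. I expect this to follow by taking the smallest $i$ that is "out of place" and analyzing the cases, using (SPCT1)--(SPCT3) to rule out the obstruction; in particular one shows that if every ascent $i$ had $i+1$ immediately to its right, the tableau would be forced to be exactly $T^{\sup}_\alpha$ (the rows and columns would be completely rigid). I anticipate this case analysis — locating a usable nonattacking ascent — to be the principal obstacle, as it requires carefully combining the row-increasing, first-column, and peak-subdiagram constraints; everything else (the partial order machinery, the inversion-count termination argument, and the identification of $T^{\sup}_\alpha$ with the identity reading word) is routine given the earlier results.

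An alternative, perhaps cleaner route avoids characterizing minimality abstractly: induct on the number of inversions of $\rw(T)$. If $\rw(T) = 12\cdots n$ we are done by the identification above. Otherwise, apply the key claim to get a nonattacking ascent $i$; set $T' = s_i T = \pi_i T$, which lies in $\SPCT(\alpha)$ and has one fewer inversion, so by induction $T' = \pi_{j_1}\cdots\pi_{j_s} T^{\sup}_\alpha$, whence $T = \pi_i T' = \pi_i \pi_{j_1}\cdots\pi_{j_s} T^{\sup}_\alpha$. This packages the same ideas but sidesteps talking about $\preceq$-minimality; either way the crux is the existence of a nonattacking ascent when $T \neq T^{\sup}_\alpha$.
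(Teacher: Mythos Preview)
Your proposal has the direction of the argument reversed in a way that breaks it at several points.

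First, $\rw(T^{\sup}_\alpha)$ is \emph{not} the identity permutation. For $\alpha = (3,4,3,2)$ (the paper's own example), the column reading word of $T^{\sup}_\alpha$ is $1,3,5,7,2,4,6,8,9,10,11,12$, which has six inversions. So $T^{\sup}_\alpha$ cannot be characterized as the tableau with zero inversions, and your ``identification'' step fails.

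Second, and more fundamentally, you have the effect of $\pi_i$ backwards. If $i$ is a nonattacking ascent of $T$, then $\pi_i T = s_i T$ has one \emph{more} inversion, not one fewer: an ascent means $i$ precedes $i+1$ in $\rw(T)$, and swapping them creates a new inversion. This is precisely what the proof of Lemma~\ref{lem:partialorder} says. So your induction on inversion count runs in the wrong direction.

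Third, in your alternative route you set $T' = s_i T = \pi_i T$ and then write $T = \pi_i T'$. But $i$ is a \emph{descent} of $T' = s_i T$, so $\pi_i T' = -T'$, not $T$.

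What you actually need is: for each $T \neq T^{\sup}_\alpha$, exhibit a \emph{descent} $j$ of $T$ with $s_j T \in \SPCT(\alpha)$; then $T' := s_j T$ has one fewer inversion, $j$ is a nonattacking ascent of $T'$, and $\pi_j T' = T$, so induction applies to $T'$. Characterizing $T^{\sup}_\alpha$ as the unique tableau with no such ``undoable descent'' is genuine combinatorial content not supplied by your reading-word claim. The paper's proof handles exactly this: it locates the earliest box (in reading order) where $T$ differs from $T^{\sup}_\alpha$ and, by a case analysis on whether that box lies in the first column, the second column, or a later column, shows the entry there can be lowered via one swap (or, in the delicate first-column case, two swaps) while remaining in $\SPCT(\alpha)$.
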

\begin{proof}
Let $\mathfrak{b}$ be the earliest box of $D(\alpha)$ in reading order whose entry $j$ in $T$ is not the same as its entry $k$ in $T_\alpha^{\sup}$. We will show that we can obtain $T'\in \SPCT(\alpha)$ such that $T'$ agrees with $T$ on all entries in boxes earlier than $\mathfrak{b}$ and there is a sequence of $0$-Hecke operators taking $T'$ to $T$. Then, repeating this procedure as often as needed, we obtain $T^\star\in \SPCT(\alpha)$ such that $T^\star$ agrees with $T^{\sup}_\alpha$ on all boxes up to and including $\mathfrak{b}$, and $T$ is obtained by applying a sequence of $0$-Hecke operators to $T^\star$. We may then repeat this argument on the earliest box in which the entries of $T^\star$ and $T^{\sup}_\alpha$ disagree, and so forth.

Suppose first that $\mathfrak{b}$ is not in the first two columns of $D(\alpha)$. Then the boxes earlier than $\mathfrak{b}$ have entries $1, 2, \ldots k-1$ in both $T$ and $T_\alpha^{\sup}$. Hence $j>k$ and the entry $j-1 \ge k$ must appear later in reading order than $j$ in $T$. By (SPCT1) the entry $j-1$ is not immediately right of $j$ in $T$. Thus by Lemma~\ref{lem:SPCTdescents}, $T = \pi_{j-1}T'$, where $T'\in \SPCT(\alpha)$ is $T$ with entries $j-1$ and $j$ exchanged. In particular $T'$ has entry $j-1$ in box $\mathfrak{b}$. 

Next suppose $\mathfrak{b}$ is in the second column of $D(\alpha)$.  
The entries in the first column of $T$ are the first $\ell(\alpha)$ odd numbers. Therefore the lowest $\ell(\alpha)-1$ entries of the second column of $T$ must be the first $\ell(\alpha)-1$ even numbers, since if an entry $b$ in the second column was larger than a higher entry $a$ in the first column, (SPCT3) would be violated by the entries $1, \ldots , a$ and the row containing $b$. So if $\mathfrak{b}$ is in the second column of $T$, it must be the top box in the second column. We may then repeat the same argument as in the first case to find $T'$. 

Finally, suppose $\mathfrak{b}$ is in the first column of $D(\alpha)$.   
If $j-1$ is in the third column or later, then the tableau $T'$ obtained by swapping $j-1$ and $j$ in $T$ is in $\SPCT(\alpha)$, and $T = \pi_{j-1}T'$. Otherwise, $j-1$ cannot be in the first column of $T$ since this would violate (SPCT3) on the entries $1, \ldots , j$ and the row containing $j-1$, so $j-1$ is in the second column of $T$. In fact $T$ and $T_\alpha^{\sup}$ must agree on all boxes below $\mathfrak{b}$ in the first column and all boxes at least two boxes below the box immediately right of $\mathfrak{b}$ in the second column, since entries of these boxes in the second column are forced by the entries in the first column. Moreover, $j-1$ cannot occur in the row of $\mathfrak{b}$ or higher, by (SPCT1) and (SPCT2). This forces $j-1$ to be in the box immediately right of and below $\mathfrak{b}$. Since the entry $j$ of box $\mathfrak{b}$ in $T$ is at least three greater than the entry of the box immediately below it, the same is true for entry $j-1$ of the box immediately right of and below $\mathfrak{b}$. Hence $j-2$ cannot be in the first two columns of $T$, and thus we may exchange the entries $j-2$ and $j-1$, and then exchange the entries $j-1$ and $j$, to obtain $T'\in \SPCT(\alpha)$ with $T = \pi_{j-2}\pi_{j-1}T'$. The entry in box $\mathfrak{b}$ of $T'$ is $j-1$.
\end{proof}

\begin{theorem}\label{thm:SPCTcyclic}
Let $\alpha \vDash n$. Then $\widetilde{{\bf N}}_{\SPCT(\alpha)}$ is tableau-cyclic, generated by $T_\alpha^{\sup}$.
\end{theorem}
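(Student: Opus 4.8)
The plan is to deduce the theorem almost immediately from Lemma~\ref{lem:cyclic}. Recall that $\widetilde{{\bf N}}_{\SPCT(\alpha)}$ is defined as the induced module ${\rm Ind}_{H_n(0)}^{HCl_n(0)}{\bf N}_{\SPCT(\alpha)}$, so it has basis $\{c_XT : X\subseteq [n],\ T\in \SPCT(\alpha)\}$, and the generators $c_j$ of the Clifford part act by $c_j(c_XT) = \pm c_{X\triangle\{j\}}T$. Consequently, any submodule containing a single tableau $T$ (i.e.\ $c_\emptyset T$) automatically contains $c_XT$ for every $X\subseteq[n]$, since we can build up any $X = \{i_1<\cdots<i_k\}$ by successively applying $c_{i_k},\dots,c_{i_1}$. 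So it suffices to show that the submodule generated by $T_\alpha^{\sup}$ contains every $T\in \SPCT(\alpha)$.

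First I would observe that the $HCl_n(0)$-submodule of $\widetilde{{\bf N}}_{\SPCT(\alpha)}$ generated by $T_\alpha^{\sup}$ is in particular closed under the action of the $\pi_i$. By Lemma~\ref{lem:cyclic}, for every $T\in\SPCT(\alpha)$ there is a word $\pi_{i_1}\cdots\pi_{i_r}$ with $T = \pi_{i_1}\cdots\pi_{i_r}T_\alpha^{\sup}$; here the $\pi_i$ are the $0$-Hecke operators on ${\bf N}_{\SPCT(\alpha)}$, which are identified (via $T \leftrightarrow c_\emptyset T$) with the restriction of the $HCl_n(0)$-action to the $H_n(0)$-subalgebra. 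Applying that same word of operators inside $\widetilde{{\bf N}}_{\SPCT(\alpha)}$ therefore carries $T_\alpha^{\sup}$ to $T$ (up to nothing: on the degree-zero span spanned by the tableaux themselves, the formulas \eqref{eqn:descent}, \eqref{eqn:ascent0}, \eqref{eqn:ascentswap} with $X=\emptyset$ reduce exactly to \eqref{eqn:0Hecke}). Hence $T$ lies in the submodule generated by $T_\alpha^{\sup}$ for every $T\in\SPCT(\alpha)$. Combining this with the previous paragraph, the submodule generated by $T_\alpha^{\sup}$ contains all basis elements $c_XT$, so it is all of $\widetilde{{\bf N}}_{\SPCT(\alpha)}$.

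Since all the substantive combinatorial work has been done in Lemma~\ref{lem:cyclic} (the genuinely delicate part, which handles the interaction of (SPCT1)--(SPCT3) with swapping adjacent entries near the first two columns), the proof of Theorem~\ref{thm:SPCTcyclic} itself should be short: essentially a two-sentence argument combining Lemma~\ref{lem:cyclic} with the fact that the $c_j$ act invertibly (up to sign) on the $X$-coordinate. The only point requiring a word of care is making explicit that a word of $\pi_i$'s applied to $T_\alpha^{\sup}$ inside $\widetilde{{\bf N}}_{\SPCT(\alpha)}$ really does reproduce the corresponding word applied in ${\bf N}_{\SPCT(\alpha)}$ — i.e.\ that no lower-order ($c_X$ with $X\neq\emptyset$) terms are spuriously introduced — which is immediate from the $X=\emptyset$ cases of Proposition~\ref{prop:Dcrossrelations}. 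I do not anticipate any real obstacle here.
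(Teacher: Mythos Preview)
Your proposal is correct and follows essentially the same approach as the paper's proof: both invoke Lemma~\ref{lem:cyclic} to reach every $T\in\SPCT(\alpha)$ from $T_\alpha^{\sup}$ via $\pi_i$'s, observe that with $X=\emptyset$ the action of $\pi_i$ in $\widetilde{{\bf N}}_{\SPCT(\alpha)}$ agrees with that in ${\bf N}_{\SPCT(\alpha)}$, and then apply $c_j$'s to obtain all $c_XT$. Your remark that the $X=\emptyset$ cases of Proposition~\ref{prop:Dcrossrelations} reduce exactly to \eqref{eqn:0Hecke} is a slightly cleaner justification than the paper's parenthetical reference to the first case of \eqref{eqn:ascentswap}, but the content is the same.
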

\begin{proof}
By Lemma~\ref{lem:cyclic}, every $T\in \SPCT(\alpha)$ may be obtained by application of a sequence of $\pi_i$'s to $T_\alpha^{\sup}$ (note $T_\alpha^{\sup}=c_\emptyset T_\alpha^{\sup}$, and thus we are in the first case of (\ref{eqn:ascentswap}) when applying this sequence). Then we can obtain $c_XT$ for any $X$ by applying the appropriate sequence of $c_j$'s.
\end{proof}

\begin{remark} The supermodules $\widetilde{{\bf N}}_{\SPCT(\alpha)}$ are not in general indecomposable. For example, $\SPCT(2,2,2,2) = \{T^{\sup}_{(2,2,2,2)}\}$, and it follows that $\widetilde{{\bf N}}_{\SPCT(2,2,2,2)}$ is isomorphic to ${\bf M}_{(2,2,2,2)}$. 
Since $\Peak(2,2,2,2) = \{2,4,6\}$ has three elements, by \cite[Corollary 5.6]{BHT} ${\bf M}_{(2,2,2,2)}$ decomposes into a sum of $2^{\lfloor\frac{3-1}{2}\rfloor}=2$ copies of the simple supermodule associated to $(2,2,2,2)$. 
\end{remark}

\section{A peak analogue of Young quasisymmetric Schur functions}
The $\SPCT$s used to construct the quasisymmetric Schur $Q$-function basis of $\PQSym$ are a subset of the \emph{standard immaculate tableaux} that define the dual immaculate basis of $\QSym$ \cite{BBSSZ:2}. Accordingly, the quasisymmetric Schur $Q$-functions may be regarded as a peak algebra analogue of the dual immaculate functions. We now consider the question of applying the diagram modules framework to find analogues in $\PQSym$ of other important bases of $\QSym$, as peak characteristics of $0$-Hecke-Clifford supermodules. 

The well-studied quasisymmetric Schur functions \cite{HLMvW11:QS} are an obvious next candidate. To facilitate comparison with the Schur $Q$-functions later, we will work instead with a variant called the \emph{Young quasisymmetric Schur functions} introduced in \cite{LMvWbook}. Young quasisymmetric Schur functions are obtained from quasisymmetric Schur functions by reversing both the indexing composition and the variable set.

\subsection{Diagram modules for Young quasisymmetric Schur functions}

\begin{definition}\label{def:SYCT}
Let $\alpha\vDash n$. A \emph{standard Young composition tableau} \cite{LMvWbook} of shape $\alpha$ is a standard filling $T$ of $D(\alpha)$ satisfying the following conditions.
\end{definition}

\begin{enumerate}[leftmargin=1.8cm]
\item[(SYCT1)] Entries of $T$ increase from left to right along rows.
\item[(SYCT2)] Entries of $T$ increase from bottom to top in the first column.
\item[(SYCT3)] Let $(c,r)$ denote the box in column $c$ and row $r$ and $T(c,r)$ its entry in $T$. If boxes $(c,r)$ and $(c+1,r')$ for $r'<r$ are in $D(\alpha)$ and $T(c,r)<T(c+1,r')$, then $(c+1,r)$ is in $D(\alpha)$ and $T(c+1,r)<T(c+1,r')$. 
\end{enumerate}

Pictorially, (SYCT3) states that for any three boxes arranged as below, if $a<b$ then $c<b$. We refer to these configurations of boxes as \emph{triples}.
\begin{equation}\label{eqn:SYCTtriples}
    \begin{array}{l}
        \tableau{ a & c } \\[-0.5\cellsize]  \hspace{1.5\cellsize} \vdots  \hspace{0.4\cellsize} \\ \tableau{  & b } 
    \end{array}
  \end{equation}
Let $\SYCT(\alpha)$ denote the set of all standard Young composition tableaux of shape $\alpha$. The \emph{descent set} of $T\in \SYCT(\alpha)$ is defined in \cite{LMvWbook} to be the entries $i$ such that $i+1$ is weakly left of $i$ in $T$. Then the Young quasisymmetric Schur function $\yqs_\alpha$ \cite{LMvWbook} is defined by
\begin{equation}\label{eqn:yqs}
\yqs_\alpha = \sum_{T\in \SYCT(\alpha)} F_{\comp_n(\Des(T))}.
\end{equation}

\begin{example}\label{ex:SYCT}
Let $\alpha = (2,4)$. The elements of $\SYCT(\alpha)$, along with their descent sets, are shown below.
\[\begin{array}{c@{\hskip2\cellsize}c@{\hskip2\cellsize}c@{\hskip2\cellsize}c@{\hskip2\cellsize}c}
\tableau{ 3 & 4 & 5 & 6 \\ 1 & 2 } & \tableau{ 2 & 3 & 5 & 6 \\ 1 & 4 } & \tableau{ 2 & 3 & 4 & 6 \\ 1 & 5 } & \tableau{ 2 & 3 & 4 & 5 \\ 1 & 6 }  \\ \\
\{ 2 \} & \{ 1, 3 \} & \{ 1, 4 \} &  \{ 1, 5 \} 
\end{array}\]
Therefore, $\yqs_{(2,4)} = F_{(2,4)} + F_{(1,2,3)} + F_{(1,3,2)} + F_{(1,4,1)}$.
\end{example}

To construct diagram $H_n(0)$-modules whose quasisymmetric characteristics are the Young quasisymmetric Schur functions, it suffices to confirm that the descent set of an $\SYCT$ arises from a reading word, and that $\SYCT(\alpha)$ is ascent-compatible. 

For $T\in \SYCT(\alpha)$, define $\rw(T)$ to be the word obtained by reading the entries of $T$ from top to bottom in the first column and then bottom to top in subsequent columns, reading the columns in order from left to right. For example, the reading word of the second tableau in Example~\ref{ex:SYCT} is $214356$.

\begin{lemma}\label{lem:SYCTdescent}
Let $\alpha\vDash n$ and $T\in \SYCT(\alpha)$. The descent set of $T$ obtained from $\rw(T)$ is precisely the entries $i$ such that $i+1$ is weakly left of $i$ in $T$.
\end{lemma}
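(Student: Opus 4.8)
The plan is to compare, entry by entry, the two descriptions of $\Des(T)$: the one coming from the reading word $\rw(T)$ via equation~(\ref{eqn:rwdescent}), and the combinatorial one stating that $i\in\Des(T)$ exactly when $i+1$ is weakly left of $i$ in $T$. So fix $i\in[n-1]$, let $\mathfrak{b}$ be the box containing $i$ and $\mathfrak{b}'$ the box containing $i+1$, and analyze where $\mathfrak{b}'$ sits relative to $\mathfrak{b}$: strictly left, strictly right, or same column. I want to show that in the ``strictly left'' and ``same column'' cases $i$ is right of $i+1$ in $\rw(T)$ (hence a descent), and in the ``strictly right'' case $i$ is left of $i+1$ in $\rw(T)$ (hence an ascent); since ``weakly left'' means ``strictly left or same column,'' this matches the claimed description.

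First I would handle the easy cases. If $\mathfrak{b}'$ is strictly left of $\mathfrak{b}$: the reading word processes columns left to right, so every entry of the column of $i+1$ is read before every entry of the column of $i$; thus $i+1$ precedes $i$ in $\rw(T)$ and $i\in\Des(T)$. If $\mathfrak{b}'$ is strictly right of $\mathfrak{b}$: symmetrically, $i$ precedes $i+1$ in $\rw(T)$, so $i\notin\Des(T)$. The only subtle case is when $i$ and $i+1$ lie in the same column $c$. Here I would split further on whether $c=1$ or $c\ge 2$. If $c=1$: by (SYCT2) entries increase up the first column, so $i$ is below $i+1$; but the first column is read \emph{top to bottom}, so $i+1$ is read before $i$, giving $i\in\Des(T)$ — and indeed $i+1$ is (weakly) left of $i$. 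If $c\ge 2$: columns other than the first are read \emph{bottom to top}, so $i$ precedes $i+1$ in $\rw(T)$ iff $i$ is below $i+1$ in column $c$; I need to argue that in fact $i$ must be \emph{above} $i+1$ when they share a higher column, so that $i\notin\Des(T)$ and this agrees with ``$i+1$ not weakly left of $i$.''

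The crux is therefore the following claim: if $i$ and $i+1$ occupy the same column $c\ge 2$ of $T\in\SYCT(\alpha)$, then $i$ lies \emph{above} $i+1$. Suppose not, i.e.\ $i+1$ is strictly above $i$ in column $c$; say $i$ is in box $(c,r)$ and $i+1$ in box $(c,r')$ with $r'>r$. I would look at the box $(c-1,r)$ immediately to the left of $i$, which exists since $T$ has shape $D(\alpha)$ and the box $(c,r)$ is present. Let $a=T(c-1,r)$. By (SYCT1), $a<i$, so $a<i+1=T(c,r')$; but then the three boxes $(c-1,r)$, $(c,r)$, $(c,r')$ with $r'>r$ — wait, I need the triple to be oriented correctly: (SYCT3)/(\ref{eqn:SYCTtriples}) concerns boxes $(c,r)$, $(c+1,r)$, $(c+1,r')$ with $r'<r$. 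So I should instead look \emph{downward}: take boxes $(c-1,r')$, $(c,r')$, $(c,r)$ with $r<r'$; here the triple has top-left entry $T(c-1,r')$, top-right entry $i+1=T(c,r')$, and bottom-right entry $i=T(c,r)$. Since $T(c-1,r')<T(c,r')$? That is not automatic; but if $T(c-1,r')<i+1$ then (SYCT3) forces $i=T(c,r)<T(c,r')=i+1$, which is true and yields no contradiction. So this direct approach needs care — I expect the genuinely delicate point is precisely extracting a contradiction from (SYCT3) when $i,i+1$ are stacked the ``wrong'' way in a higher column, and I would work it out by choosing the triple so that the hypothesis $a<b$ of (SYCT3) is forced (e.g.\ using that the left neighbour in row $r'$ has entry less than $i+1$ by (SYCT1) if it is in the same row as a smaller entry, or tracing down column $c-1$), deriving that some box must exist and be filled with an entry between $i$ and $i+1$, which is impossible since $i$ and $i+1$ are consecutive. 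Once this claim is established, assembling the three cases into the statement is immediate.
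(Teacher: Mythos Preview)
Your overall decomposition into the three column cases is exactly the paper's approach, and the different-column cases and the $c=1$ case are fine. But the $c\ge 2$ case contains two genuine errors.

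First, you have the desired conclusion backwards. You write that you want $i$ above $i+1$ ``so that $i\notin\Des(T)$ and this agrees with `$i+1$ not weakly left of $i$'.'' But ``same column'' \emph{is} weakly left, so the lemma demands $i\in\Des(T)$ in this case. And indeed if $i$ is above $i+1$ in a column read bottom to top, then $i+1$ is read first and $i$ later, so $i$ \emph{is} a descent. Your earlier summary paragraph had this right; you reversed it when you got to the details.

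Second, your application of (SYCT3) is misread. Your second triple --- top-left $a=T(c-1,r')$ (the entry immediately left of $i+1$), top-right $i+1=T(c,r')$, bottom $i=T(c,r)$ --- is exactly the one the paper uses. But in the triple picture (\ref{eqn:SYCTtriples}) the hypothesis of (SYCT3) is $a<b$ (top-left versus \emph{bottom}), not $a<c$; and the conclusion is $c<b$ (top-right less than bottom). So here the rule says: if $a<i$ then $i+1<i$. Now $a<i+1$ by (SYCT1), and since entries are distinct integers this forces $a\le i$, hence $a<i$; then (SYCT3) gives $i+1<i$, the contradiction you were looking for. You compared $a$ to the wrong vertex and therefore got the vacuous conclusion $i<i+1$. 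With these two fixes your argument is complete and coincides with the paper's.
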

\begin{proof}
If $i$ and $i+1$ are in different columns of $T$, then $i$ is read later than $i+1$ if and only if $i+1$ is left of $i$ in $T$. If $i$ and $i+1$ are both in the first column of $T$, then by (SYCT2) $i$ is below $i+1$, thus read later than $i+1$. If $i$ and $i+1$ are both in column $c$ of $T$ for $c>1$, then $i$ must be above $i+1$ since otherwise (SYCT3) is violated by $i$, $i+1$ and the entry immediately left of $i+1$ in $T$, and therefore $i$ is read later than $i+1$.  
\end{proof}

\begin{lemma}\label{lem:SYCTascentcompatible}
Let $\alpha \vDash n$. Then $\SYCT(\alpha)$ is ascent-compatible.
\end{lemma}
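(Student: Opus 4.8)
The plan is to invoke Lemma~\ref{lem:position}: it suffices to show that the attacking/nonattacking status of an ascent $i$ in $T \in \SYCT(\alpha)$ depends only on the relative position (in $D(\alpha)$) of the box containing $i$ and the box containing $i+1$, not on the particular tableau $T$. So first I would use Lemma~\ref{lem:SYCTdescent} to record when $i$ is an ascent: $i \notin \Des(T)$ means $i+1$ is strictly right of $i$ in $T$, and in that case either $i+1$ is in a strictly higher row, the same row (one column to the right, by (SYCT1), since entries in a row are consecutive-increasing only when adjacent), or a strictly lower row. The claim to prove is that $s_iT \in \SYCT(\alpha)$ (nonattacking) or not (attacking) is determined by which of these relative positions occurs.

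The key step is a case analysis on the relative position of the box of $i$ and the box of $i+1$. If $i+1$ is immediately right of $i$ in the same row, then $s_iT$ violates (SYCT1), so $i$ is attacking — and this holds for every such $T$. The substantive case is when $i+1$ is strictly right of $i$ and in a different row; here I would argue $s_iT \in \SYCT(\alpha)$ always. Swapping $i$ and $i+1$ cannot break (SYCT1) because $i$ and $i+1$ are not in the same row, and any row comparison involving only one of them is unaffected in its truth value by the swap since $i,i+1$ are consecutive and no other entry lies between them. It cannot break (SYCT2) because $i+1$ being strictly right of $i$ forces $i+1$ not to be in the first column (if it were in column $1$, then since entries increase up column $1$ and $i<i+1$, box of $i$ would be below it in column $1$ too, contradicting "strictly right"); and if $i$ is in column $1$ it stays the largest-so-far in its initial segment after the swap, etc. — the column-$1$ ordering among $\{1,\dots\}$ is preserved. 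The real work is (SYCT3): I would check that no triple as in \eqref{eqn:SYCTtriples} changes its validity under the swap. A triple can only be affected if one of its three entries is $i$ or $i+1$; since $i$ and $i+1$ are consecutive, any inequality $x < y$ appearing in a triple condition where $\{x,y\} \cap \{i,i+1\} \neq \emptyset$ but $\{x,y\} \neq \{i,i+1\}$ retains its truth value after the swap, because swapping $i \leftrightarrow i+1$ changes neither relative order with a third value $\neq i,i+1$. The only triples potentially in danger are those containing both $i$ and $i+1$, and these I would rule out directly using that $i+1$ is strictly right of $i$: in the triple shape of \eqref{eqn:SYCTtriples} the two top boxes are in the same row and the bottom box is in a strictly lower row, so one must examine the finitely many ways $i, i+1$ can occupy two of these three positions and check (using (SYCT1), (SYCT2), (SYCT3) for $T$ itself, and the positional hypothesis) that the resulting constraints on $s_iT$ still hold.

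I expect the main obstacle to be the (SYCT3) verification: triples are a genuinely nonlocal condition, and one must be careful that the positional data "$i+1$ is strictly right of $i$, in such-and-such relative row" really does pin down the outcome uniformly over all $T \in \SYCT(\alpha)$, rather than leaving a residual dependence on other entries of $T$. The saving observation — which makes the whole argument go through — is that $i$ and $i+1$ are consecutive integers, so swapping them is invisible to every order comparison not involving the pair itself; hence the only thing that can change is whether triples containing \emph{both} $i$ and $i+1$ remain valid, and for those the relative position of the two boxes is exactly the relevant data. Once this is established for all relative positions, Lemma~\ref{lem:position} immediately gives that $\SYCT(\alpha)$ is ascent-compatible.
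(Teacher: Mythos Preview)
Your proposal is correct and follows essentially the same route as the paper: show that an ascent $i$ is attacking if and only if $i+1$ is immediately right of $i$, then invoke Lemma~\ref{lem:position}. The paper's argument is slightly more streamlined in the (SYCT3) step---rather than a full case analysis of how $i$ and $i+1$ might sit in a triple, it observes directly that the only nontrivial configuration to exclude is ``$i+1$ one column right of $i$ and strictly below'', and this is impossible because it would already violate (SYCT3) in $T$ itself; your case analysis would arrive at the same conclusion.
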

\begin{proof}
Let $T\in \SYCT(\alpha)$. We will show an ascent $i$ in $T$ is attacking if and only if $i+1$ is immediately right of $i$ in $T$. Then by Lemma~\ref{lem:position}, $\SYCT(\alpha)$ is ascent-compatible.

Let $i$ be an ascent in $T$. Then $i+1$ is strictly right of $i$ in $T$ by Lemma~\ref{lem:SYCTdescent}. If $i+1$ is immediately right of $i$ in $T$, then $i$ is attacking since $s_iT$ fails (SYCT1). If $i+1$ is not immediately right of $i$ in $T$, then $i+1$ is not in the same row as $i$, and moreover $i+1$ cannot be in the column immediately right of $i$ and a row below $i$, since then $i$ and $i+1$ would form two entries of a triple that would violate (SYCT3). Therefore $i$ and $i+1$ are not in the same row or column and do not both belong to any triple of entries. So the ordering of entries on rows, columns and triples is preserved when exchanging $i$ and $i+1$, and therefore $s_iT \in \SYCT(\alpha)$. Hence $i$ is nonattacking. 
\end{proof}

The next result now follows from (\ref{eqn:yqs}), Lemma~\ref{lem:SYCTascentcompatible} and Theorem \ref{thm:char}, letting $D=D(\alpha)$ and $\StdTab(D) = \SYCT(\alpha)$.

\begin{theorem}
Let $\alpha\vDash n$. Then 
$ch([{\bf N}_{\SYCT(\alpha)}]) = \yqs_\alpha.$
\end{theorem}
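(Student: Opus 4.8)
The plan is to obtain this as an immediate consequence of Theorem~\ref{thm:char}, applied to the diagram $D = D(\alpha)$ with $\StdTab(D) = \SYCT(\alpha)$ and with the reading order on $D(\alpha)$ given by the reading word $\rw$ defined just above Lemma~\ref{lem:SYCTdescent} (read the first column top-to-bottom, then each subsequent column bottom-to-top, columns left to right). Before Theorem~\ref{thm:char} can be invoked, two things must be checked: first, that $\SYCT(\alpha)$ is an ascent-compatible subset of $\StdFill(D(\alpha))$, so that the hypotheses of Theorem~\ref{thm:char} are met; and second, that the descent set used to define $\yqs_\alpha$ in \eqref{eqn:yqs} (following \cite{LMvWbook}, the entries $i$ such that $i+1$ is weakly left of $i$ in $T$) coincides with the reading-word descent set $\Des(T)$ of \eqref{eqn:rwdescent} that appears in the conclusion of Theorem~\ref{thm:char}.

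Both of these are handled by the preceding lemmas, which I would establish first. Lemma~\ref{lem:SYCTascentcompatible} gives ascent-compatibility of $\SYCT(\alpha)$ (itself a consequence of Lemma~\ref{lem:position}, once one verifies that the attacking/nonattacking status of an ascent $i$ in $T\in\SYCT(\alpha)$ is governed solely by whether $i+1$ sits immediately to the right of $i$). Lemma~\ref{lem:SYCTdescent} provides the needed reconciliation: for $T\in\SYCT(\alpha)$, the set of $i$ with $i+1$ weakly left of $i$ in $T$ is exactly $\Des(T)$ as computed from $\rw(T)$. Granting these, Theorem~\ref{thm:char} yields
\[
ch([{\bf N}_{\SYCT(\alpha)}]) = \sum_{T\in \SYCT(\alpha)} F_{\comp_n(\Des(T))},
\]
and Lemma~\ref{lem:SYCTdescent} identifies the right-hand side with the defining sum \eqref{eqn:yqs} for $\yqs_\alpha$, completing the proof.

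Since the substantive work is already done — the $0$-Hecke module structure in Theorem~\ref{thm:0Hecke}, the characteristic formula in Theorem~\ref{thm:char}, and the two lemmas on $\SYCT(\alpha)$ — there is no genuine obstacle in the theorem itself; the argument is essentially a one-line citation. The only point requiring any care is a bookkeeping one: making sure the reading order fixed on $D(\alpha)$ is precisely the one for which Lemma~\ref{lem:SYCTdescent} was proved, so that the two descent-set conventions truly agree and the sum produced by Theorem~\ref{thm:char} matches \eqref{eqn:yqs} term by term.
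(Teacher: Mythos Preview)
Your proposal is correct and follows essentially the same approach as the paper: invoke Theorem~\ref{thm:char} with $D=D(\alpha)$ and $\StdTab(D)=\SYCT(\alpha)$, using Lemma~\ref{lem:SYCTascentcompatible} for ascent-compatibility and equation~\eqref{eqn:yqs} (via Lemma~\ref{lem:SYCTdescent}) to identify the resulting sum with $\yqs_\alpha$. Your write-up is slightly more explicit than the paper's one-line proof in flagging the role of Lemma~\ref{lem:SYCTdescent} in matching the two descent-set conventions, but the substance is identical.
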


\subsection{Extracting a new basis of the peak algebra}

Via Lemma~\ref{lem:SYCTascentcompatible} and Theorem~\ref{thm:peakchar}, the diagram modules framework also immediately gives $HCl_n(0)$-supermodules $\widetilde{{\bf N}}_{\SYCT(\alpha)}$ satisfying
\[ \widetilde{ch}([\widetilde{{\bf N}}_{\SYCT(\alpha)}]) = \sum_{T\in \SYCT(\alpha)}K_{\comp_n(\Peak(T))}.\]
One can ask whether these functions form a basis of $\PQSym$, as $\alpha$ ranges over peak compositions. 
This is not the case: these functions are not linearly independent. However, after restricting to an appropriate subset of $\SYCT(\alpha)$, the peak characteristics of the resulting $HCl_n(0)$-supermodules do form a basis of $\PQSym$. 

\begin{definition}
Given a peak composition $\alpha\vDash n$, let the \emph{standard peak Young composition tableaux} $\SPYCT(\alpha)$ be the elements of $\SYCT(\alpha)$ such that for every $1\le k \le n$, the subdiagram of $D(\alpha)$ consisting of the boxes with entries at most $k$ is the diagram of a peak composition. 
\end{definition}

Note this extra condition is precisely the condition (SPCT3) required for $\SPCT(\alpha)$. We can use the other $\SYCT$ conditions to provide a simpler description of $\SPYCT(\alpha)$.

\begin{lemma}\label{lem:SYCT2peakcomp}
Let $\alpha$ be a peak composition of $n$ and let $T\in \SYCT(\alpha)$. Then $T\in \SPYCT(\alpha)$ if and only if entries in the second column of $T$ increase from bottom to top.
\end{lemma}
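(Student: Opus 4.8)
The plan is to prove both implications directly from the defining conditions, translating the peak-composition requirement (SPCT3) into a statement purely about the second column.

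First I would establish the forward direction. Suppose $T\in\SPYCT(\alpha)$, so that for every $k$ the subdiagram of boxes with entries at most $k$ is the diagram of a peak composition. Assume for contradiction that the second column of $T$ does not increase from bottom to top: then there exist boxes $(2,r)$ and $(2,r')$ with $r<r'$ but $T(2,r)>T(2,r')$. Consider the smallest entry $k$ such that the box containing $k$ sits in column $2$ and in a row whose second-column entry is out of order in this way; more concretely, pick the moment $k$ at which an entry is added to column $2$ in a row $r$ such that some strictly higher row $r'>r$ already holds a (smaller) second-column entry. I would argue that at step $k$, the row $r$ gets a box of length exactly $2$ added on top of an existing length-$2$ row (namely row $r'$, or a row between them), so the subdiagram for entries $\le k$ has two rows of length $2$ that are "consecutive" in the sense forbidden for peak compositions — a peak composition allows consecutive equal parts of size $\ge 2$, so I need to be more careful here. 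The actual obstruction: when $k$ is placed in column $2$ of row $r$, the box $(1,r)$ already holds some entry $a<k$, and the row $r'>r$ has first-column entry $a'$ with $a<a'<k$ (since column $1$ increases upward) but second-column entry $b<k$ — wait, that would mean row $r'$ of length $2$ sits above row $r$ which at an earlier stage had length $1$. So at the stage just before $b$ was added to $(2,r')$, the subdiagram had a length-$1$ row (row $r$) strictly below a row (row $r'$) about to become length $2$; one checks via (SYCT1), (SYCT2), (SYCT3) that this configuration is incompatible with (SPCT3) exactly as in the argument of Lemma~\ref{lem:cyclic} and Lemma~\ref{lem:SPCTdescents}. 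I would spell out that a peak composition cannot have a part equal to $1$ except possibly the last, so a length-$1$ row strictly below a longer row is fatal.

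For the converse, suppose entries in the second column of $T$ increase from bottom to top; I must show (SPCT3) holds, i.e., every subdiagram $D_{\le k}$ of boxes with entries $\le k$ is a peak composition diagram — equivalently, $D_{\le k}$ has no row of length $1$ except possibly the topmost occupied row, and more precisely no length-$1$ row lying strictly below a nonempty row. Since each $D_{\le k}$ is built from $D_{\le k-1}$ by adding one box, and by (SYCT1)/(SYCT2) the added box is either at the end of an existing row or starts a new top-of-first-column row, the only way (SPCT3) can fail at step $k$ is if adding box with entry $k$ in the first column creates a length-$1$ row below some nonempty row, or leaves a length-$1$ row below it. Using (SYCT2) (first column increases upward) the new first-column box is the highest in column $1$, so no row lies above it in $D_{\le k}$ — hence adding to column $1$ is always safe. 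The remaining danger is a pre-existing length-$1$ row that never gets a second box "in time": if row $r$ has only its first-column box among entries $\le k$ while some higher row $r'>r$ is nonempty with entries $\le k$, that higher row has a second-column entry $b\le k$, and by the monotonicity hypothesis on column $2$ together with (SYCT3), the second-column box of row $r$ must carry an entry smaller than $b$, hence $\le k$, contradiction. Filling in this last chain of inequalities carefully is the one place requiring real care, but it is the same triple-based reasoning already used in Lemma~\ref{lem:SYCTdescent} and Lemma~\ref{lem:SYCTascentcompatible}.

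The main obstacle I anticipate is pinning down precisely which configuration of rows violates the peak-composition condition and extracting the correct entrywise inequalities from (SYCT3) — in particular handling the interaction between the "length-$1$ only allowed at the end" constraint and the triple condition. Once that is isolated, both directions are short bookkeeping arguments on the order in which boxes of columns $1$ and $2$ are filled.
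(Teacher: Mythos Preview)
Your overall strategy matches the paper's: argue one direction by finding a stage $k$ at which $D_{\le k}$ has a length-$1$ row that is not the top, and the other by showing no such stage exists. The forward direction (``$\SPYCT$ implies column $2$ increases''), after your several false starts, lands on the right picture --- at stage $b-1$ (where $b=T(2,r')$ and there is a lower row $r$ with $T(2,r)>b$) row $r$ has length $1$ and lies strictly below the nonempty row $r'$ --- and this is exactly the paper's argument, only the paper reaches it in one line by choosing $b$ minimal and noting $T(1,r')<T(2,r')<b$.

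The converse direction, however, has a real gap. You assert that if row $r$ has length $1$ in $D_{\le k}$ and some row $r'>r$ is nonempty, then ``that higher row has a second-column entry $b\le k$.'' This is not justified: row $r'$ could itself have length $1$ in $D_{\le k}$, so there is no such $b$. Your subsequent chain (column-$2$ monotonicity forces $T(2,r)<b\le k$) then never gets started. The fix --- and this is precisely what the paper does --- is to compare $T(2,r)$ not with a second-column entry above it but with the \emph{first}-column entry $T(1,r')$, which you do know is $\le k$. Applying (SYCT3) to the triple with $a=T(1,r')$ and $b=T(2,r)$: if $a<b$ then (SYCT3) would force $T(2,r')<T(2,r)$, contradicting the column-$2$ monotonicity hypothesis. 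Hence $T(1,r')>T(2,r)$, so $T(2,r)<T(1,r')\le k$, contradicting that row $r$ has length $1$. You invoke (SYCT3) in your sketch, but you point it at the wrong pair of entries; once redirected, the argument is immediate and your step-by-step tracking of $D_{\le k}$ becomes unnecessary.
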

\begin{proof}
Suppose entries in the second column of $T$ increase from bottom to top. 
Then any triple involving an entry $a$ in the first column and an entry $b$ in the second column (see (\ref{eqn:SYCTtriples})) must have $a>b$, since otherwise by (SYCT3) we would need $c$ above $b$ in the second column with $c<b$. This implies that the entry immediately right of any entry $a$ in the first column is smaller than any entry above $a$, hence the subdiagram corresponding to the entries $1, \ldots , k$ is the diagram of a peak composition for any $k$.  

Conversely, suppose entries in the second column of $T$ do not increase from bottom to top. 
Consider the smallest entry $b$ in the second column such that the entry $c$ immediately above $b$ is smaller than $b$. Then by (SYCT3) the entry $a$ immediately left of $c$ must also be smaller than $b$. Then the subdiagram corresponding to the entries $1, \ldots , a$ is not the diagram of a peak composition, so $T\notin \SPYCT(\alpha)$.
\end{proof}

We use the same reading word for $T\in \SPYCT(\alpha)$ as for $T\in \SYCT(\alpha)$.

\begin{lemma}\label{lem:SYCT2compatible}
Let $\alpha$ be a peak composition of $n$. Then $\SPYCT(\alpha)$ is ascent-compatible.
\end{lemma}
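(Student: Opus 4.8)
The plan is to reduce to Lemma~\ref{lem:position} exactly as in the proofs of Lemmas~\ref{lem:SPCTascentcompatible} and \ref{lem:SYCTascentcompatible}: it suffices to show that for $T\in\SPYCT(\alpha)$, an ascent $i$ is attacking if and only if $i+1$ is immediately right of $i$ in $T$. Since $\SPYCT(\alpha)\subseteq\SYCT(\alpha)$ and the reading word is the same, Lemma~\ref{lem:SYCTdescent} still tells us that $i$ being an ascent means $i+1$ is strictly right of $i$ in $T$, and Lemma~\ref{lem:SYCTascentcompatible} already gives the analogous characterization within $\SYCT(\alpha)$. So the only thing to check is that the operation $s_i$ (swapping $i$ and $i+1$) does not take us out of $\SPYCT(\alpha)$ in the nonattacking case, and does take us out in the attacking case — i.e. that the extra membership condition (equivalently, by Lemma~\ref{lem:SYCT2peakcomp}, that the second column is bottom-to-top increasing) is preserved under the relevant swaps.

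First I would dispose of the attacking direction: if $i+1$ is immediately right of $i$, then $s_iT$ already fails (SYCT1), so $s_iT\notin\SYCT(\alpha)\supseteq\SPYCT(\alpha)$, hence $i$ is attacking. For the nonattacking direction, suppose $i+1$ is not immediately right of $i$. By Lemma~\ref{lem:SYCTascentcompatible} we already know $s_iT\in\SYCT(\alpha)$; it remains to verify the second-column-increasing condition of Lemma~\ref{lem:SYCT2peakcomp} for $s_iT$. The swap only changes the relative positions of the values $i$ and $i+1$, so the second column of $s_iT$ fails to be increasing only if one of $i,i+1$ lies in the second column and the swap creates a descent there. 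I would argue this cannot happen: since $T$ satisfies the second-column condition and $i+1$ is strictly right of $i$, the only potential problem is $i$ in the second column with $i+1$ directly above it, or $i+1$ in the second column with $i$ directly below it. In the first scenario $i+1$ would be immediately above $i$ in column $2$, so in $s_iT$ we would get $i$ above $i+1$ in column $2$ — but this configuration, together with the entry immediately left of $i$ in column $1$, is exactly the situation ruled out in the converse part of Lemma~\ref{lem:SYCT2peakcomp}'s proof applied to $T$ itself (the value directly left of $i+1$ in column $1$ is $<i+1$, forcing a failure of (SPCT3) for $T$), contradiction; the second scenario is symmetric. One also needs to note that a swap between column $1$ and column $2$, or between column $2$ and a later column, cannot disturb the column-$2$ monotonicity beyond what is handled by the (SYCT3)-triple analysis already in Lemma~\ref{lem:SYCTascentcompatible}.

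The main obstacle, and the step I would write most carefully, is the case analysis for where $i$ and $i+1$ sit relative to the first two columns: one must make sure that in every configuration allowed by $i$ being a nonattacking ascent in $\SYCT(\alpha)$, the additional peak-composition constraint survives the swap, using the structural fact (from Lemma~\ref{lem:SYCT2peakcomp}) that in an $\SPYCT$ the second column is forced by the first column together with monotonicity. Once that is nailed down, Lemma~\ref{lem:position} applies verbatim and gives ascent-compatibility of $\SPYCT(\alpha)$.
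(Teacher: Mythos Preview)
Your approach is the same as the paper's --- reduce to Lemma~\ref{lem:position} by showing the attacking characterization in $\SPYCT(\alpha)$ coincides with that in $\SYCT(\alpha)$, which amounts to checking that a nonattacking swap preserves the second-column monotonicity of Lemma~\ref{lem:SYCT2peakcomp}. However, your case analysis is tangled and in places vacuous: the scenarios ``$i$ in the second column with $i+1$ directly above it'' and ``$i+1$ in the second column with $i$ directly below it'' place $i$ and $i+1$ in the \emph{same} column, which contradicts your own premise that $i+1$ is strictly right of $i$. So those cases never arise, and the argument you give to exclude them is beside the point. Meanwhile, the cases that actually matter (one of $i,i+1$ in column~2, the other elsewhere) you dismiss with a hand-wave.

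The paper's proof replaces all of this with a single observation you are circling around but never state: because $i+1$ is strictly right of $i$, the entries $i$ and $i+1$ lie in \emph{different} columns, and therefore exchanging them cannot change the relative order of entries within any column. In particular the second column remains increasing, so $s_iT\in\SPYCT(\alpha)$ by Lemma~\ref{lem:SYCT2peakcomp}. Once you make that observation, no case analysis is needed at all.
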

\begin{proof}
By Lemma~\ref{lem:SYCTdescent}, given $T\in \SYCT(\alpha)$ we have $\pi_iT = s_iT$ only if $i+1$ is strictly right of $i$ in $T$. Hence the relative order of the entries in any column of $\pi_iT$ is the same as the relative order of entries in that column of $T$. In particular this is true for the second column of $T$, so by Lemma~\ref{lem:SYCT2peakcomp}, if $T\in \SPYCT(\alpha)$ and $\pi_iT=s_iT$ in ${\bf N}_{\SYCT(\alpha)}$ then $s_iT\in \SPYCT(\alpha)$. So the characterization of attacking descents in $\SPYCT(\alpha)$ is the same as that in $\SYCT(\alpha)$, meaning attacking is determined by relative position of boxes (Lemma~\ref{lem:SYCTascentcompatible}), and thus $\SPYCT(\alpha)$ is ascent-compatible by Lemma~\ref{lem:position}. 
\end{proof}

Let $\alpha$ be a peak composition of $n$. Lemma~\ref{lem:SYCT2compatible} and Theorem~\ref{thm:peakchar} allow us to define the \emph{peak Young quasisymmetric Schur function} $\widetilde{\yqs}_\alpha$ as the peak characteristic of $\widetilde{{\bf N}}_{\SPYCT(\alpha)}$, resulting in the following formula. 

\begin{theorem}\label{thm:peakcharYQS}
Let $\alpha$ be a peak composition of $n$. Then 
\[\widetilde{\yqs}_\alpha = \widetilde{ch}([\widetilde{{\bf N}}_{\SPYCT(\alpha)}]) = \sum_{T\in \SPYCT(\alpha)} K_{\comp_n(\Peak(T))}.\]
\end{theorem}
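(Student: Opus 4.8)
The statement is essentially a bookkeeping corollary of the machinery already set up, so the plan is to assemble three ingredients that are all in place. First, Lemma~\ref{lem:SYCT2compatible} tells us that $\SPYCT(\alpha)$ is an ascent-compatible subset of $\StdFill(D(\alpha))$ with respect to the reading order inherited from $\SYCT(\alpha)$. This is exactly the hypothesis needed to invoke the diagram-module construction, so I would begin by recording that, with $D = D(\alpha)$ and $\StdTab(D) = \SPYCT(\alpha)$, the space $\widetilde{{\bf N}}_{\SPYCT(\alpha)}$ is a well-defined $HCl_n(0)$-supermodule (via Theorem~\ref{thm:0Hecke} for the underlying $H_n(0)$-action and then induction).

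Second, I would apply Theorem~\ref{thm:peakchar} directly to this data. That theorem states that for any ascent-compatible $\StdTab(D)$ one has
\[
\widetilde{ch}([\widetilde{{\bf N}}_{\StdTab(D)}]) = \sum_{T\in \StdTab(D)} K_{\comp_n(\Peak(T))}.
\]
Substituting $\StdTab(D) = \SPYCT(\alpha)$ yields
\[
\widetilde{ch}([\widetilde{{\bf N}}_{\SPYCT(\alpha)}]) = \sum_{T\in \SPYCT(\alpha)} K_{\comp_n(\Peak(T))},
\]
which is precisely the right-hand equality in the statement.

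Third, the middle equality is simply the \emph{definition} of $\widetilde{\yqs}_\alpha$ as announced in the sentence preceding the theorem: we \emph{define} the peak Young quasisymmetric Schur function to be the peak characteristic of $\widetilde{{\bf N}}_{\SPYCT(\alpha)}$. So there is nothing to prove there beyond noting that the definition is legitimate, which it is because the peak characteristic $\widetilde{ch}$ is a well-defined map on the Grothendieck group $\widetilde{\mathcal{G}}$ and $\widetilde{{\bf N}}_{\SPYCT(\alpha)}$ is a genuine finite-dimensional $HCl_n(0)$-supermodule (its basis $\{c_XT : X\subseteq[n],\ T\in\SPYCT(\alpha)\}$ is finite).

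Since every step is a direct citation, there is no real obstacle; the only thing requiring a moment's care is confirming that the reading order and the notion of $\Des(T)$ used implicitly by Theorem~\ref{thm:peakchar} (which is the reading-word descent set of~\eqref{eqn:rwdescent}) agrees with the descent set used to index the peak functions here — but this compatibility for $\SYCT$, and hence for $\SPYCT$, is exactly the content of Lemma~\ref{lem:SYCTdescent}, so $\Peak(T) = \Peak(\Des(T))$ is unambiguous. Thus the proof is a one-line chaining of Lemma~\ref{lem:SYCT2compatible}, the definition of $\widetilde{\yqs}_\alpha$, and Theorem~\ref{thm:peakchar}.
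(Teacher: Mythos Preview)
Your proposal is correct and matches the paper's own justification exactly: the paper states the theorem without a separate proof block, simply noting in the preceding sentence that Lemma~\ref{lem:SYCT2compatible} and Theorem~\ref{thm:peakchar} allow one to define $\widetilde{\yqs}_\alpha$ as the peak characteristic of $\widetilde{{\bf N}}_{\SPYCT(\alpha)}$, yielding the displayed formula. Your additional remark about Lemma~\ref{lem:SYCTdescent} ensuring the descent conventions agree is a nice point of care but not strictly needed, since the paper has already fixed the reading word for $\SPYCT(\alpha)$ to be that of $\SYCT(\alpha)$.
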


We now show the peak Young quasisymmetric Schur functions form a basis of $\PQSym$. Define a total ordering on the peak compositions of $n$ by $\alpha \succ \beta$ if for the smallest index $i$ such that $\alpha_i \neq \beta_i$, we have $\alpha_i > \beta_i$.  

\begin{lemma}\label{lem:order}
Let $\alpha$ be a peak composition of $n$. Then $\widetilde{\yqs}_\alpha = K_\alpha + \sum_{\beta}c_\beta K_\beta$, where $\beta$ ranges over peak compositions of $n$ and $c_\beta = 0$ whenever $\beta \succeq \alpha$.
\end{lemma}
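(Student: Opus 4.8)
The plan is to show that $\widetilde{\yqs}_\alpha$ is ``upper unitriangular'' with respect to the total order $\succ$ on peak compositions, which by Theorem~\ref{thm:peakcharYQS} amounts to analyzing the peak sets $\Peak(T)$ as $T$ ranges over $\SPYCT(\alpha)$. First I would identify the canonical source tableau: there is a distinguished element $T_0 \in \SPYCT(\alpha)$ — the analogue of $T^{\sup}_\alpha$ from Section~\ref{sec:TypeCModule}, or more simply the ``row-superstandard'' filling in which rows are filled left to right with consecutive blocks $1,2,\ldots$ from bottom to top — and I would check that $\Des(T_0) = \Des(\alpha)$, so that $\comp_n(\Peak(T_0)) = \comp_n(\Peak(\alpha)) = \alpha$ since $\alpha$ is a peak composition. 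This produces the leading term $K_\alpha$ with coefficient $1$, provided no other $T \in \SPYCT(\alpha)$ has $\comp_n(\Peak(T)) = \alpha$; I would rule this out by showing that any $T$ with $\Peak(T)\supseteq \Peak(\alpha)$ forcing $\comp_n(\Peak(T)) = \alpha$ must in fact equal $T_0$ (the reverse inequality on descent sets pins down the filling column by column using (SYCT1), (SYCT2), Lemma~\ref{lem:SYCT2peakcomp}).

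The main work is the triangularity bound: for every $T \in \SPYCT(\alpha)$ we must show $\comp_n(\Peak(T)) = \beta$ implies $\beta \preceq \alpha$, i.e. $\beta$ is not strictly greater than $\alpha$ in $\succ$ (and only $T_0$ achieves $\beta = \alpha$). The natural approach is to relate the composition $\beta = \comp_n(\Peak(T))$ to the shape $\alpha$ via the first-column structure. Because of (SYCT2) and the peak-composition condition (SPCT3), the entries in the first column of $T$ are constrained, and I would argue that the parts of $\comp_n(\Des(T))$ — hence, after passing to peaks, the parts of $\beta$ — are bounded in a way governed by where the first-column entries sit relative to the row lengths of $\alpha$. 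Concretely: the first part $\alpha_1$ equals the length of the bottom row, and any descent composition of an $\SYCT$ of shape $\alpha$ has its initial run length at most $\alpha_1$ (the entries $1, 2, \ldots$ stay in the bottom row until forced up), with equality iff the bottom row is $1,2,\ldots,\alpha_1$; an analogous statement propagates to later parts by induction on the rows, using that the restriction of $T$ to entries exceeding the bottom-row maximum is again of this type on the remaining shape. Combining these running-length bounds with the passage from $\Des$ to $\Peak$ (which only merges adjacent parts and can only decrease them in the $\succ$-order sense, since $\Peak$ of a peak composition is itself) yields $\beta \preceq \alpha$.

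I expect the main obstacle to be the bookkeeping in this inductive argument — precisely controlling how $\Peak(\Des(T))$ relates to $\Des(T)$ and how the recursion on rows interacts with the peak condition (SPCT3)/Lemma~\ref{lem:SYCT2peakcomp}. In particular one must be careful that the peak operation does not ``increase'' the composition in the $\succ$-order: since $\Peak(X)$ removes from a descent set $X$ any element $i$ with $i-1\in X$, passing from $\comp_n(\Des(T))$ to $\comp_n(\Peak(\Des(T)))$ merges certain consecutive parts, and one needs that this merging, combined with the bound $\Des(T)$ gives against $\Des(\alpha)$, still lands weakly below $\alpha$; this is where the hypothesis that $\alpha$ is itself a peak composition (so $\Peak(\alpha) = \Des(\alpha)$) is essential. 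A cleaner alternative I would consider is to prove the statement first at the level of quasisymmetric Schur functions — it is classical (\cite{LMvWbook}) that $\yqs_\alpha$ expands unitriangularly in the fundamental basis with respect to a suitable order — and then apply the Hopf map $\theta$ from the Remark after Theorem~\ref{thm:char}, checking that $\theta$ respects the relevant orders on compositions; this would reduce the problem to a known triangularity statement plus a compatibility check on $\theta$, which is likely the shortest route.
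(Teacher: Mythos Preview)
Your main approach --- identifying the row-superstandard filling $T_0$ and arguing row by row that any $T$ with $\comp_n(\Peak(T))\succeq\alpha$ must agree with $T_0$ --- is essentially the paper's proof. The paper's argument is cleaner in one important respect: rather than first bounding $\comp_n(\Des(T))$ and then wrestling with the passage to $\Peak$, it analyzes $\Peak(T)$ directly. The key point you are missing is that if $j$ is the smallest entry not in row~$1$, then $j$ sits in the leftmost box of row~$2$, and the peak condition (SPCT3) forces $j+1$ to be strictly right of $j$; hence $j-1$ is a descent but $j$ is \emph{not}, so $j-1$ is genuinely the smallest element of $\Peak(T)$, not merely of $\Des(T)$. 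With this observation the comparison to $\alpha_1$ is immediate and the induction proceeds without ever needing to control how merging parts under $\Des\mapsto\Peak$ interacts with the lex order. Your stated worry that ``merging can only decrease in the $\succ$-order'' is in fact backwards --- merging adjacent parts \emph{increases} the first affected part --- so your detour through $\Des(T)$ creates exactly the obstacle you flag, and the fix is simply to bypass it as the paper does.

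Your proposed shortcut via $\theta$ does \emph{not} work. The map $\theta$ applied to $\yqs_\alpha = ch([{\bf N}_{\SYCT(\alpha)}])$ yields $\widetilde{ch}([\widetilde{{\bf N}}_{\SYCT(\alpha)}]) = \sum_{T\in\SYCT(\alpha)} K_{\comp_n(\Peak(T))}$, whereas $\widetilde{\yqs}_\alpha$ is the sum over the strictly smaller set $\SPYCT(\alpha)$. These differ in general (indeed, the paper notes just before defining $\SPYCT$ that the $\SYCT$-based peak characteristics are not linearly independent), so the known unitriangularity of $\yqs_\alpha$ in the fundamental basis says nothing directly about $\widetilde{\yqs}_\alpha$. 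The restriction to $\SPYCT(\alpha)$ --- equivalently the condition (SPCT3) --- is not a cosmetic convenience but precisely what makes both the basis statement and the clean peak argument above go through.
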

\begin{proof}
Let $T^*$ be the standard filling of $D(\alpha)$ with entries $1, \ldots , \alpha_1$ in row $1$, entries $\alpha_1+1 , \ldots , \alpha_1+\alpha_2$ in row $2$, etc.  It is immediate that $T^*\in \SPYCT(\alpha)$. By definition, $\Des(T^*) = \{\alpha_1 , \alpha_1+\alpha_2, \ldots , \}$, and since $\alpha$ is a peak composition, $\alpha_1\neq 1$ and none of these values are consecutive. Hence $\Peak(T^*) = \Des(T^*)=\alpha$.

Now let $T \in \SPYCT(\alpha)$ and assume $\comp_n(\Peak(T)) \succeq \comp_n(\Peak(T^*))$. We claim that the smallest entry of $\Peak(T)$ is $j-1$, where $j$ is the smallest entry in $T$ that is not in row $1$ of $T$. To see this, note that $j$ must be in the leftmost box of row $2$ of $T$. Then since the subdiagrams of $D(\alpha)$ corresponding to the entries $1, \ldots , j$ and $1, \ldots , j+1$ in $T$ must be diagrams of peak compositions, we have $j-1\neq 1$ and $j+1$ is strictly right of $j$ in $T$. Therefore $j-1\neq 1$ is (the smallest) descent in $T$ and $j$ is not a descent in $T$, and thus $j-1\in \Peak(T)$ and there is no smaller element in $\Peak(T)$. Since $\comp_n(\Peak(T)) \succeq \comp_n(\Peak(T^*))$ we must have $j-1 \ge \alpha_1$, but there are only $\alpha_1$ boxes in row $1$ so we must have $j-1 = \alpha_1$. Therefore, the first row of $T$ consists of the entries $1, \ldots , \alpha_1$. 

Now repeat this argument for the second element of $\Peak(T)$, letting $j$ be the smallest entry in $T$ that is greater than $\alpha_1$ and not in row $2$ of $T$. Similarly, we have $j$ is in the leftmost box of row $3$ of $T$, $j-1 \neq \alpha_1+1$ and $j+1$ is strictly right of $j$ in $T$, so $j-1$ is the second element of $\Peak(T)$. Then $j-1 \ge \alpha_1+\alpha_2$ by assumption, which forces $j-1 = \alpha_1+\alpha_2$ since row $2$ has only $\alpha_2$ boxes. Therefore the second row of $T$ consists of the entries $\alpha_1+1 , \ldots , \alpha_1+\alpha_2$. Continuing thus, we find $T=T^*$. 
\end{proof}

\begin{theorem}\label{thm:basis}
The set $\{\widetilde{\yqs}_\alpha : \alpha \mbox{ is a peak composition}\}$ forms a basis of $\PQSym$. 
\end{theorem}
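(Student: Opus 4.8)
The plan is to combine Theorem~\ref{thm:peakcharYQS} with the triangularity established in Lemma~\ref{lem:order}. Since $\PQSym$ has a basis $\{K_\alpha : \alpha \text{ a peak composition}\}$ indexed by peak compositions of each $n$, and since $\widetilde{\yqs}_\alpha$ is homogeneous of degree $n$ whenever $\alpha \vDash n$, it suffices to show for each fixed $n$ that $\{\widetilde{\yqs}_\alpha : \alpha \text{ a peak composition of } n\}$ is a basis of the degree-$n$ component of $\PQSym$. This component is finite-dimensional with basis $\{K_\alpha : \alpha \text{ a peak composition of } n\}$, so by a dimension count it is enough to prove linear independence, and for that the unitriangularity from Lemma~\ref{lem:order} is exactly the right tool.

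Concretely, I would order the peak compositions of $n$ by the total order $\succ$ defined just before Lemma~\ref{lem:order}, say $\alpha^{(1)} \succ \alpha^{(2)} \succ \cdots \succ \alpha^{(N)}$, where $N$ is the number of peak compositions of $n$. Lemma~\ref{lem:order} says that
\[
\widetilde{\yqs}_{\alpha^{(i)}} = K_{\alpha^{(i)}} + \sum_{j > i} c_{\alpha^{(j)}} K_{\alpha^{(j)}},
\]
i.e.\ the transition matrix expressing the $\widetilde{\yqs}_{\alpha^{(i)}}$ in terms of the $K_{\alpha^{(i)}}$ is upper unitriangular (all diagonal entries equal to $1$, and only strictly $\succ$-smaller peak compositions appear below). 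An upper unitriangular matrix is invertible over any ring, so the $\widetilde{\yqs}_\alpha$ span the same space as the $K_\alpha$ and are linearly independent; hence they form a basis of the degree-$n$ part of $\PQSym$. Taking the union over all $n \ge 0$ gives the result.

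I do not anticipate a genuine obstacle here: the substance of the argument is already contained in Lemma~\ref{lem:order}, and what remains is the standard observation that a unitriangular change of basis is invertible. The only point requiring any care is confirming that $\widetilde{\yqs}_\alpha$ is well-defined and homogeneous of the correct degree — but this is immediate from Theorem~\ref{thm:peakcharYQS}, since every $T \in \SPYCT(\alpha)$ has $n$ boxes so $\comp_n(\Peak(T)) \vDash n$ and each $K_{\comp_n(\Peak(T))}$ is homogeneous of degree $n$ — and that the indexing set of peak compositions of $n$ matches the indexing set of the peak basis $\{K_\alpha\}$ in degree $n$, which is noted in Section~\ref{sec:background}.
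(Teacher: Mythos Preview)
Your proposal is correct and follows essentially the same approach as the paper: the paper's own proof simply observes that by Lemma~\ref{lem:order} the transition matrix from the $\widetilde{\yqs}_\alpha$ to the peak basis is triangular, which is exactly the unitriangularity argument you spell out in more detail.
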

\begin{proof}
By Lemma~\ref{lem:order}, the transition matrix between the peak Young quasisymmetric Schur functions and the peak basis of $\PQSym$ is triangular.
\end{proof}

The peak Young quasisymmetric Schur functions do not expand positively in the quasisymmetric Schur $Q$-functions. However, computations suggest the quasisymmetric Schur $Q$-functions might expand positively in the peak Young quasisymmetric Schur functions. As further evidence for this, we can show that for any peak composition $\alpha$, the functions in the peak expansion of $\widetilde{\yqs}_\alpha$ are a subset of those in the peak expansion of $\widetilde{Q}_\alpha$.

\begin{proposition}
Let $\alpha\vDash n$ be a peak composition. Then $\widetilde{Q}_\alpha - \widetilde{\yqs}_\alpha$ expands positively in the peak basis.
\end{proposition}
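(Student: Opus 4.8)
The strategy is to display $\widetilde{\yqs}_\alpha$ as a subsum of the peak expansion of $\widetilde{Q}_\alpha$. By Definition~\ref{def:SchurQformula}, $\widetilde{Q}_\alpha = \sum_{T \in \SPCT(\alpha)} K_{\comp_n(\Peak(T))}$, and by Theorem~\ref{thm:peakcharYQS}, $\widetilde{\yqs}_\alpha = \sum_{T \in \SPYCT(\alpha)} K_{\comp_n(\Peak(T))}$; note, however, that the two sums use different reading words to compute $\Des(T)$, and hence $\Peak(T)$: the $\SPCT$ reading word of \cite{Oguz} for the first and the $\SYCT$ reading word of \cite{LMvWbook} for the second. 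I would prove two things: \textbf{(i)} $\SPYCT(\alpha) \subseteq \SPCT(\alpha)$ as sets of fillings of $D(\alpha)$; and \textbf{(ii)} for every $T \in \SPYCT(\alpha)$, the descent set of $T$ read from the $\SYCT$ reading word coincides with the descent set of $T$ read from the $\SPCT$ reading word, so the two recipes assign $T$ the same peak set. Granting (i) and (ii), the summands of $\widetilde{\yqs}_\alpha$ are precisely the summands of $\widetilde{Q}_\alpha$ indexed by fillings lying in $\SPYCT(\alpha)$, whence $\widetilde{Q}_\alpha - \widetilde{\yqs}_\alpha = \sum_{T \in \SPCT(\alpha) \setminus \SPYCT(\alpha)} K_{\comp_n(\Peak(T))}$, a nonnegative integral combination of peak functions.

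Claim (i) is definition-chasing: $\SPYCT(\alpha)$ consists of the $T \in \SYCT(\alpha)$ (fillings satisfying (SYCT1), (SYCT2), (SYCT3)) for which every initial subdiagram is a peak composition diagram, i.e. which in addition satisfy (SPCT3); since (SYCT1) is (SPCT1) and (SYCT2) is (SPCT2), forgetting (SYCT3) leaves a filling satisfying (SPCT1), (SPCT2), (SPCT3), which is an element of $\SPCT(\alpha)$.

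For claim (ii), the crucial preliminary is that no $T \in \SPCT(\alpha)$ has consecutive integers $i$ and $i+1$ in its first column. Indeed, if it did, then by (SPCT1) the box (if any) directly right of $i$, and the box directly right of $i+1$, would carry entries larger than $i+1$, so in the subdiagram $D_{i+1}$ of boxes with entries $\le i+1$ the rows containing $i$ and $i+1$ would each have length $1$; by (SPCT2) the row of $i+1$ sits immediately above the row of $i$, and no higher box lies in $D_{i+1}$. Hence the composition read off $D_{i+1}$ has a non-final part equal to $1$, contradicting (SPCT3). Next, as observed in the proof of Lemma~\ref{lem:SYCTdescent}, for $T \in \SYCT(\alpha)$ two consecutive integers lying in one column $c \ge 2$ occur with the smaller one above (forced by (SYCT3)), while in the first column the smaller one lies below (by (SYCT2)); combined with the preliminary, for $T \in \SPYCT(\alpha)$ the condition ``$i+1$ is in the same column as $i$'' always entails ``$i+1$ is below $i$, in a column $\ge 2$''. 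I would then simply compare the two descriptions of $\Des(T)$: from the $\SYCT$ reading word it is $\{ i : i+1 \text{ is weakly left of } i \}$ (Lemma~\ref{lem:SYCTdescent}), and from the $\SPCT$ reading word it is $\{ i : i+1 \text{ is strictly left of } i, \text{ or in the same column as } i \text{ and below } i \}$. When $i+1$ is not in the same column as $i$ both conditions reduce to ``$i+1$ strictly left of $i$''; when $i+1$ is in the same column as $i$, the line above places $i$ in both sets. Thus the descent sets coincide, and since $\Peak(T) = \Peak(\Des(T))$ depends only on $\Des(T)$, so do the peak sets.

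The routine parts are the definition-chasing in (i) and the case analysis comparing the two descent descriptions in (ii); the genuinely substantive point, and the one I would take most care with, is reconciling the two distinct reading-word conventions, which works out precisely because (SPCT3) forbids consecutive entries in the first column while (SYCT3) pins down the vertical order of consecutive entries in the remaining columns.
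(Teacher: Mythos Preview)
Your proof is correct and follows essentially the same approach as the paper's own argument: show $\SPYCT(\alpha)\subseteq\SPCT(\alpha)$ by comparing the defining conditions, then reconcile the two reading-word conventions by observing that (SPCT3) rules out consecutive entries in the first column while (SYCT3) forces $i$ above $i+1$ in any other column, so the two descent sets agree. The paper's presentation is terser but the substance is identical.
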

\begin{proof}
Let $T\in \SPYCT(\alpha)$. Then $T$ satisfies (SPCT1), (SPCT2) and (SPCT3), so $T\in \SPCT(\alpha)$. Now, write $\Des_{\SPYCT}(T)$ for the descent set of $T$ thought of as an element of $\SPYCT(\alpha)$, and write $\Des_{\SPCT}(T)$ analogously. To show $T$ corresponds to the same peak function in both $\widetilde{Q}_\alpha$ and $\widetilde{\yqs}_\alpha$, it suffices to show $\Des_{\SPYCT}(T)=\Des_{\SPCT}(T)$. Since reading words for $\SPYCT(\alpha)$ and $\SPCT(\alpha)$ both read columns from left to right, if $i$ is in a different column to $i+1$ in $T$ then $i\in \Des_{\SPYCT}(T)$ if and only if $i\in \Des_{\SPCT}(T)$. Now suppose $i$ and $i+1$ are in the same column of $T$. This can't be the first column of $T$ since this would violate (SPCT3) on the entries $1, \ldots , i+1$. For $i$ and $i+1$ both in any other column of $T$, (SYCT3) forces $i$ to be above $i+1$, and therefore $i$ is in both $\Des_{\SPCT}(T)$ and $\Des_{\SPYCT}(T)$.
\end{proof}

\begin{remark}
In contrast to the $HCl_n(0)$-supermodules for quasisymmetric Schur $Q$-functions (Theorem~\ref{thm:SPCTcyclic}), the $HCl_n(0)$-supermodules $\widetilde{{\bf N}}_{\SYCT(\alpha)}$ and $\widetilde{{\bf N}}_{\SPYCT(\alpha)}$ are not tableau-cyclic in general. For either of these supermodules, the action of $\pi_i$ or $c_j$ on a tableau $T$ does not alter the relative order of entries in any column. Hence if $T, S\in \SYCT(\alpha)$ (or $\SPYCT(\alpha)$) and there is some column in $D(\alpha)$ in which the relative order of entries of $T$ and $S$ is different, then there is no element of $\SYCT(\alpha)$ or $\SPYCT(\alpha)$ from which both $T$ and $S$ can be obtained by applying sequences of $\pi_i$ and/or $c_j$.  
\end{remark}

\section{Connections to Schur $Q$-functions}

An appealing feature of (Young) quasisymmetric Schur functions is that they provide an particularly nice refinement of Schur functions. Therefore, it is natural to ask how the peak Young quasisymmetric functions relate to the peak algebra analogues of the Schur functions, namely, the Schur $Q$-functions. In this section we first use the diagram modules framework to construct $HCl_n(0)$-supermodules whose peak characteristics are the Schur $Q$-functions. We then establish an isomorphism of $HCl_n(0)$-supermodules that proves the basis of peak Young quasisymmetric Schur functions actually contains the Schur $Q$-functions. We also examine the $H_n(0)$-modules that arise alongside the $HCl_n(0)$-supermodules for Schur $Q$-functions via the diagram modules framework, and show their quasisymmetric characteristics are in fact Young quasisymmetric Schur functions.

\subsection{$0$-Hecke-Clifford supermodules for Schur $Q$-functions} 
Schur $Q$-functions form a basis for a Hopf subalgebra of the symmetric functions. They can be defined in terms of \emph{shifted} tableaux. 
A \emph{partition} $\lambda$ is a finite weakly decreasing sequence of positive integers; if the sequence is strictly decreasing then $\lambda$ is called a \emph{strict partition}. If the parts of $\lambda$ sum to $n$ we write $\lambda \vdash n$. 
The \emph{shifted diagram} of a strict partition $\lambda$ is the diagram $\ShD(\lambda)$ obtained from the usual (left-justified) diagram $D(\lambda)$ by shifting all boxes in the $i$th row $i-1$ units rightwards. 

\begin{definition}\label{def:SShT}
Let $\lambda$ be a strict partition. A \emph{standard shifted tableau} of shape $\lambda$ is a standard filling of $\ShD(\lambda)$ such that entries increase from left to right along each row and from bottom to top in each column. 
\end{definition}
Let $\SShT(\lambda)$ denote the set of all standard shifted tableaux of shape $\lambda$. Define the reading word $\rw(S)$ of $S\in \SShT(\lambda)$ by reading the entries of $S$ from left to right along rows, starting at the top row and proceeding downwards. It is then immediate that $i\in \Des(S)$ if and only if $i$ is strictly below $i+1$ in $S$. 

\begin{example}\label{ex:SchurQ}
Let $\lambda = (4,3,1)$. The elements of $\SShT(\lambda)$, along with their descent sets, are shown below.
\[\begin{array}{c@{\hskip2\cellsize}c@{\hskip2\cellsize}c@{\hskip2\cellsize}c@{\hskip2\cellsize}c@{\hskip2\cellsize}c}
\tableau{ & & 8 \\  & 5 & 6 & 7 \\ 1 & 2 & 3 & 4 } & \tableau{ & & 7 \\  & 5 & 6 & 8 \\ 1 & 2 & 3 & 4 } & \tableau{ & & 8 \\  & 4 & 6 & 7 \\ 1 & 2 & 3 & 5 } & \tableau{ & & 7 \\  & 4 & 6 & 8 \\ 1 & 2 & 3 & 5 } & \tableau{ & & 8 \\  & 4 & 5 & 7 \\ 1 & 2 & 3 & 6 } & \tableau{ & & 7 \\  & 4 & 5 & 8 \\ 1 & 2 & 3 & 6 }  \\ \\  
\{4, 7\} & \{4, 6\} & \{3, 5, 7\} & \{3, 5, 6\} & \{3, 6, 7\} & \{3, 6\}  \\ \\
\tableau{ & & 6 \\  & 4 & 5 & 8 \\ 1 & 2 & 3 & 7 } &  \tableau{ & & 8 \\  & 3 & 6 & 7 \\ 1 & 2 & 4 & 5 } &  \tableau{ & & 7 \\  & 3 & 6 & 8 \\ 1 & 2 & 4 & 5 } &  \tableau{ & & 8 \\  & 3 & 5 & 7 \\ 1 & 2 & 4 & 6 } &  \tableau{ & & 7 \\  & 3 & 5 & 8 \\ 1 & 2 & 4 & 6 } &  \tableau{ & & 6 \\  & 3 & 5 & 8 \\ 1 & 2 & 4 & 7 }    
\\ \\ \{3, 5, 7\} & \{2, 5, 7\} & \{2, 5, 6\} &  \{2, 4, 6, 7\} & \{2, 4, 6\} & \{2, 4, 5, 7\} 
\end{array}\]
\end{example}

Given a strict partition $\lambda \vdash n$, the Schur $Q$-function $Q_\lambda$ can be defined by its expansion in the peak basis (\cite{Assaf:shifted, Stembridge:enriched}). 
\begin{equation}\label{eqn:SchurQintopeak}
Q_\lambda = \sum_{S\in \SShT(\lambda)}K_{\comp_n(\Peak(S))}.
\end{equation}
\begin{example}
Let $\lambda = (4,3,1)$. Using Example~\ref{ex:SchurQ} and noting that $\Peak(\{3, 5, 6\}) = \{3, 5\}$, $\Peak(\{3, 6, 7\}) = \{3, 6\}$ etc., we have
\begin{align*} 
Q_{(4,3,1)} & = K_{(4,3,1)} +  K_{(4,2,2)} + 2K_{(3,2,2,1)} + K_{(3,2,3)} +  2K_{(3,3,2)} \\   
& \quad +  K_{(2,3,2,1)} + K_{(2,3,3)} + 2K_{(2,2,2,2)} +  K_{(2,2,3,1)}.
\end{align*}
\end{example}

We can apply the diagram modules framework to standard shifted tableaux to obtain $HCl_n(0)$-supermodules whose peak characteristics are the Schur $Q$-functions.

\begin{lemma}\label{lem:SShTascentcompatible}
Let $\lambda\vdash n$ be a strict partition. Then $\SShT(\lambda)$ is ascent-compatible.
\end{lemma}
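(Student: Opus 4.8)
The plan is to invoke Lemma~\ref{lem:position}: it suffices to show that, for any $S \in \SShT(\lambda)$ and any ascent $i$ of $S$, whether $i$ is attacking or nonattacking depends only on the relative position in $\ShD(\lambda)$ of the box containing $i$ and the box containing $i+1$. Concretely, I expect the right characterization to be: an ascent $i$ in $S$ is attacking if and only if $i+1$ is immediately right of $i$ in $S$ (so that $i$ and $i+1$ lie in the same row, in adjacent columns). This is exactly parallel to Lemma~\ref{lem:SPCTdescents} and Lemma~\ref{lem:SYCTascentcompatible}, so the argument should follow the same template.

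First I would unpack the descent condition: from the stated reading word for $\SShT(\lambda)$ (read each row left to right, top row first, proceeding downwards), $i \in \Des(S)$ iff $i$ is strictly below $i+1$ in $S$; hence $i$ is an ascent iff $i+1$ is weakly above $i$, i.e.\ $i+1$ is in the same row as $i$ and to its right, or $i+1$ is in a strictly higher row than $i$ (in any column). Next, the easy direction: if $i+1$ is immediately right of $i$ in $S$, then $s_iS$ violates the row-increasing condition of Definition~\ref{def:SShT}, so $s_iS \notin \SShT(\lambda)$ and $i$ is attacking. For the converse, suppose $i$ is an ascent and $i+1$ is \emph{not} immediately right of $i$; I must check that $s_iS$ still satisfies both the row-increasing and column-increasing conditions. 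Since $i < i+1$ are the two swapped values, the only potential violations involve the box of $i$, the box of $i+1$, and their horizontal/vertical neighbors, so there are only a handful of local configurations to rule out. If $i$ and $i+1$ are in the same row but not adjacent, there is an entry strictly between them in that row, which is impossible for consecutive integers; so in the ascent case $i$ and $i+1$ are in different rows, with $i+1$ strictly higher. Then they cannot be in the same column (that would force $i$ below $i+1$ in a column, making $i+1 > i$ consistent with column-increasing, but then $i$ would be a descent by the reading word — wait, rather: if $i+1$ is immediately above $i$ in the same column, $i$ is still an ascent, and swapping would put $i+1$ below $i$, violating column-increasing; I need to handle this). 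Let me restructure: the genuinely delicate sub-case is $i+1$ directly above $i$ in the same column, or $i+1$ one row up and one column over. I would argue that in shifted shape these configurations are governed purely by box positions, hence still fall under Lemma~\ref{lem:position}: the point is not that $s_iS \in \SShT(\lambda)$ in every ascent case, but that attacking-vs-nonattacking is a function of the box-pair only.

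The key step — and the main obstacle — is therefore the careful case analysis verifying that for every possible relative placement of the box of $i$ and the box of $i+1$ (same row adjacent; same column adjacent; diagonally adjacent up-right; and the various ``far apart'' cases), the attacking status is the same for all $S \in \SShT(\lambda)$ exhibiting that placement. In the ``far apart'' cases one checks directly that swapping preserves both monotonicity conditions, so $i$ is nonattacking; in the same-row-adjacent case $i$ is attacking; and the same-column-adjacent and diagonal cases need a short argument using the shifted shape (in particular that the shifted diagram has no box strictly northwest of a given box within certain ranges) to pin down the status uniformly. Once all local configurations are dispatched, the hypothesis of Lemma~\ref{lem:position} is met and ascent-compatibility of $\SShT(\lambda)$ follows immediately.
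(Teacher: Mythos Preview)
Your overall strategy---invoke Lemma~\ref{lem:position} by showing that an ascent $i$ is attacking if and only if $i+1$ is immediately right of $i$---is exactly the paper's approach. However, you have the ascent condition backwards, and this is what sends your case analysis off course. From the reading word (top row first, left to right, then proceed downwards), $i\in\Des(S)$ means $i+1$ is read \emph{before} $i$, i.e.\ $i+1$ is in a strictly higher row. Hence $i$ is an ascent precisely when $i+1$ is \emph{weakly below} $i$, not weakly above as you wrote.

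Once this is corrected, the ``delicate sub-cases'' you worry about disappear. If $i$ is an ascent, either $i+1$ is in the same row as $i$---and then by row-increasing $i+1$ must be immediately right of $i$, so $s_iS$ violates row-increasing and $i$ is attacking---or $i+1$ is in a strictly lower row. In the latter case $i$ and $i+1$ cannot share a column (column-increasing would force $i>i+1$), so they lie in distinct rows and distinct columns, and swapping them affects no row or column comparison; thus $s_iS\in\SShT(\lambda)$ and $i$ is nonattacking. The configuration ``$i+1$ immediately above $i$'' that troubled you is a \emph{descent}, not an ascent, so it never enters the analysis. With the direction fixed, the proof is two lines and Lemma~\ref{lem:position} applies directly.
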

\begin{proof}
Let $S\in \SShT(\lambda)$ and let $i$ be an ascent in $S$. Then $i+1$ is weakly below $i$ in $S$. If $i+1$ is in the same row as $i$, then it is immediately right of $i$, and $s_iS\notin \SShT(\lambda)$. Conversely, if $i+1$ is strictly below $i$ in $S$, then $i$ and $i+1$ are in different rows and columns, and thus $s_iS\in \SShT(\lambda)$. 
Therefore an ascent $i$ in $S$ is attacking if and only if $i+1$ is immediately right of $i$ in $S$, and thus by Lemma~\ref{lem:position}, $\SShT(\lambda)$ is ascent-compatible.
\end{proof}

The next theorem, which we will subsequently use to show that Schur $Q$-functions are peak Young quasisymmetric Schur functions, provides a representation-theoretic interpretation of the expansion (\ref{eqn:SchurQintopeak}) of Schur $Q$-functions into the peak basis.

\begin{theorem}\label{thm:peakcharSchurQ}
Let $\lambda\vdash n$ be a strict partition. Then 
\[ch([{\bf N}_{\SShT(\lambda)}]) = \sum_{S\in \SShT(\lambda)}F_{\comp_n(\Des(S))} \qquad \mbox{ and } \qquad \widetilde{ch}([\widetilde{{\bf N}}_{\SShT(\lambda)}]) = Q_\lambda.\]
\end{theorem}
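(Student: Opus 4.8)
The plan is to deduce both identities directly from the general diagram-module machinery established in Section~3, once the prerequisites on $\SShT(\lambda)$ are in place. The key prerequisite, ascent-compatibility of $\SShT(\lambda)$, is Lemma~\ref{lem:SShTascentcompatible}, and the fact that the descent set of $S\in\SShT(\lambda)$ in the sense of~(\ref{eqn:rwdescent}) agrees with the combinatorial descent set (the entries $i$ with $i$ strictly below $i+1$) was noted immediately after Definition~\ref{def:SShT}. With these two facts, I would simply invoke Theorem~\ref{thm:char} with $D=\ShD(\lambda)$ and $\StdTab(D)=\SShT(\lambda)$ to obtain
\[
ch([{\bf N}_{\SShT(\lambda)}]) = \sum_{S\in \SShT(\lambda)}F_{\comp_n(\Des(S))},
\]
which is the first claimed identity verbatim.

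For the second identity, I would invoke Theorem~\ref{thm:peakchar} (again with $D=\ShD(\lambda)$ and $\StdTab(D)=\SShT(\lambda)$, legitimate by Lemma~\ref{lem:SShTascentcompatible}) to get
\[
\widetilde{ch}([\widetilde{{\bf N}}_{\SShT(\lambda)}]) = \sum_{S\in \SShT(\lambda)}K_{\comp_n(\Peak(S))},
\]
and then observe that the right-hand side is exactly the expansion~(\ref{eqn:SchurQintopeak}) that defines $Q_\lambda$. So the proof is a two-line chain of equalities in each case, citing Theorem~\ref{thm:char}, Theorem~\ref{thm:peakchar}, Lemma~\ref{lem:SShTascentcompatible}, and~(\ref{eqn:SchurQintopeak}). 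One small point worth stating explicitly is that Theorem~\ref{thm:peakchar} and Theorem~\ref{thm:char} require only that $\StdTab(D)$ be a nonempty ascent-compatible subset of $\StdFill(D)$ and that the function $\Des$ used in their formulas be the reading-word descent set~(\ref{eqn:rwdescent}); the remark following Definition~\ref{def:SShT} confirms that the reading word for $\SShT(\lambda)$ induces precisely the usual shifted-tableau descent set, so the two notions of $\Des(S)$ coincide and there is no ambiguity in the formulas.

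There is essentially no obstacle here: the content was entirely front-loaded into Lemma~\ref{lem:SShTascentcompatible} (whose proof is the verification that an ascent $i$ in a standard shifted tableau is attacking precisely when $i+1$ sits immediately to the right of $i$, so that attacking/nonattacking status depends only on the relative position of the two boxes, allowing an appeal to Lemma~\ref{lem:position}) and into the definitional expansion~(\ref{eqn:SchurQintopeak}) of $Q_\lambda$. If anything needs care it is only the bookkeeping that the hypotheses of Theorems~\ref{thm:char} and~\ref{thm:peakchar} are met and that the reading orders match up; I would state that check in one sentence and then let the two chains of equalities close the argument.
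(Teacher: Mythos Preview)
Your proposal is correct and essentially identical to the paper's own proof, which also cites Lemma~\ref{lem:SShTascentcompatible}, Theorems~\ref{thm:char} and~\ref{thm:peakchar} (with $D=\ShD(\lambda)$ and $\StdTab(D)=\SShT(\lambda)$), and equation~(\ref{eqn:SchurQintopeak}). Your extra sentence confirming that the reading-word descent set agrees with the usual shifted-tableau descent set is a reasonable piece of bookkeeping, though the paper handles this implicitly via the remark after Definition~\ref{def:SShT}.
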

\begin{proof}
Follows from (\ref{eqn:SchurQintopeak}), Lemma~\ref{lem:SShTascentcompatible}, and Theorems~\ref{thm:char} and \ref{thm:peakchar}, letting $D=\ShD(\lambda)$ and $\StdTab(D) = \SShT(\lambda)$.
\end{proof}

\begin{remark}
Assaf uses \emph{marked standard (shifted) tableaux} to give a formula for the expansion of $Q_\lambda$ in the fundamental basis of $\QSym$ \cite[Proposition 4.2]{Assaf:shifted}. These tableaux are precisely the basis elements $c_XS$ of the $HCl_n(0)$-supermodule $\widetilde{{\bf N}}_{\SShT(\lambda)}$ (interpreted as in Example~\ref{ex:0HC}), thus Theorem~\ref{thm:peakcharSchurQ} and the diagram modules framework also provides a representation-theoretic interpretation of marked standard tableaux.
\end{remark}

\subsection{An isomorphism of $0$-Hecke-Clifford supermodules}

Observe that a strict partition $\lambda$ is also a peak composition, and therefore we have standard peak Young composition tableaux $\SPYCT(\lambda)$. 
Our next goal is to prove that for $\lambda$ a strict partition of $n$, $\widetilde{{\bf N}}_{\SShT(\lambda)}$ is isomorphic to $\widetilde{{\bf N}}_{\SPYCT(\lambda)}$ as $HCl_n(0)$-supermodules. 

\begin{lemma}\label{lem:SYCT2isSYCT}
Let $\lambda\vdash n$ be a strict partition. If $T\in \SPYCT(\lambda)$ or $T\in \SYCT(\lambda)$, then entries increase from bottom to top in every column of $T$. In particular, $\SPYCT(\lambda)=\SYCT(\lambda)$. 
\end{lemma}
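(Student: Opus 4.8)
The plan is to show that the column-increasing property, which by definition only holds in the first column for $\SYCT(\lambda)$, is automatically forced in every column when the shape is a \emph{strict partition} $\lambda$, and that the extra peak condition of $\SPYCT(\lambda)$ is then vacuous. First I would recall the key structural fact about the shape: since $\lambda=(\lambda_1>\lambda_2>\cdots>\lambda_k)$ is strictly decreasing, for every column $c$ with $c>1$ the box $(c,r)$ lies in $D(\lambda)$ only if the box $(c,r-1)$ directly below it also lies in $D(\lambda)$ — indeed column $c$ of $D(\lambda)$ consists precisely of the bottom $\#\{i:\lambda_i\ge c\}$ rows, a contiguous block starting at row $1$. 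This contiguity is what makes the triple condition (SYCT3) bite in every column.

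The main step is then the following claim, proved by (say) induction on the column index $c$, or directly: if $T\in\SYCT(\lambda)$ and $(c,r),(c,r+1)$ are two vertically adjacent boxes in column $c$ with $c\ge 2$, then $T(c,r)<T(c,r+1)$. Suppose for contradiction that $T(c,r)>T(c,r+1)$. Because column $c$ is a contiguous block of rows starting at row $1$, the box $(c-1,r)$ immediately to the left of $(c,r)$ is also in $D(\lambda)$, and likewise $(c-1,r+1)$. Now consider the triple formed by $(c-1,r+1)$, $(c,r+1)$ in its row, and $(c,r)$ below: in the notation of (\ref{eqn:SYCTtriples}) we have boxes $(c-1,r+1)$ with entry $a=T(c-1,r+1)$, $(c,r+1)$ with entry $c_{\mathrm{box}}=T(c,r+1)$ to its right, and $(c,r)$ with entry $b=T(c,r)$ strictly below $(c,r+1)$. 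Since by assumption $b=T(c,r)>T(c,r+1)=c_{\mathrm{box}}$, and by (SYCT1) $a=T(c-1,r+1)<T(c,r+1)=c_{\mathrm{box}}<b$, we have $a<b$; hence (SYCT3) forces $c_{\mathrm{box}}<b$, which is consistent, so that triple alone does not immediately contradict. Instead I would run the triple the other way: take the triple with $(c-1,r)$ and $(c,r)$ in row $r$ and $(c,r+1)$ — wait, that configuration has the lower box to the \emph{left-and-down}, not matching (\ref{eqn:SYCTtriples}); so the correct move is to use the box $(c-1,r+1)$ together with $(c-1,r)$? These are in a column, not a row. The clean argument is: apply (SYCT3) to the triple $\{(c-1,r),\,(c,r),\,(c,r+1)\}$ read as $a=T(c-1,r)$ in the upper row, $c_{\mathrm{box}}=T(c,r)$ to its right, $b=T(c,r+1)$ strictly below — but $(c,r+1)$ is \emph{above} $(c,r)$, not below it, so this is again not the shape of a triple. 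The resolution is that (SYCT3) is stated for a box $(c+1,r')$ with $r'<r$, i.e. the ``attacking'' box is strictly \emph{higher}; so to detect a column inversion $T(c,r)>T(c,r+1)$ I should look downward from $(c,r+1)$. Concretely: set the triple to be $(c-1,\rho)$, $(c,\rho)$, $(c,r)$ where $\rho=r+1>r$; then $a=T(c-1,r+1)$, $c_{\mathrm{box}}=T(c,r+1)$, $b=T(c,r)$, and the hypothesis $T(c,r)>T(c,r+1)$ gives $b>c_{\mathrm{box}}$; combined with (SYCT1) $a<c_{\mathrm{box}}<b$, so $a<b$, and (SYCT3) demands $c_{\mathrm{box}}<b$ — again merely consistent. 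So a single triple is genuinely not enough, and I expect the real argument to be an induction descending from the top of column $c$ downward, using minimality of a hypothetical inversion together with an induction on $c$ (column $c-1$ already known to be column-increasing) to derive a contradiction with (SYCT1) or with strictness of $\lambda$.

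Once the claim is established — that every $T\in\SYCT(\lambda)$ has all columns increasing bottom-to-top — the two remaining assertions follow quickly. For the first, any $T\in\SPYCT(\lambda)$ is by definition an element of $\SYCT(\lambda)$, so it inherits the column-increasing property; this is exactly the statement for $\SPYCT$. For the equality $\SPYCT(\lambda)=\SYCT(\lambda)$: the inclusion $\SPYCT(\lambda)\subseteq\SYCT(\lambda)$ is immediate from the definition, and for the reverse inclusion, by Lemma~\ref{lem:SYCT2peakcomp} an element $T\in\SYCT(\lambda)$ lies in $\SPYCT(\lambda)$ precisely when its second column increases from bottom to top — which is the $c=2$ case of the claim just proved. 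Hence $\SYCT(\lambda)\subseteq\SPYCT(\lambda)$, giving equality. The main obstacle, as flagged above, is pinning down the precise inductive/extremal argument that converts the triple condition (SYCT3) plus strictness of $\lambda$ into full column monotonicity in all columns $c\ge 2$; the $c=2$ case (needed for Lemma~\ref{lem:SYCT2peakcomp}) is the cleanest instance and the inductive step should propagate it to higher columns using that column $c-1$ is already known monotone and that the rows of $\ShD$-type behaviour of strict $\lambda$ make the relevant left-neighbour boxes always present.
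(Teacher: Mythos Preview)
Your overall framework is fine: reduce to showing that every $T\in\SYCT(\lambda)$ has all columns increasing, then invoke Lemma~\ref{lem:SYCT2peakcomp} (the $c=2$ case) to get $\SPYCT(\lambda)=\SYCT(\lambda)$. That part matches the paper. But the heart of the lemma---the column-monotonicity claim---is genuinely missing in your proposal, and your attempted direction is the wrong one.

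Two concrete issues. First, the structural fact you isolate about strict partitions (``column $c$ is a contiguous block of rows starting at row $1$'') holds for \emph{any} partition shape; it is not what makes strictness special here. The relevant consequence of strictness is instead: if $(c,r)$ and $(c,r')$ are both in $D(\lambda)$ with $r<r'$, then $\lambda_r>\lambda_{r'}\ge c$, so $(c+1,r)\in D(\lambda)$. In words, any box in column $c$ that is not the top box of that column has a neighbour \emph{to the right}. Second, because of this, the natural direction is rightward, not leftward. Your leftward triples (using column $c-1$) only ever re-confirm the assumed inversion; as you discovered, (SYCT3) applied there gives $T(c,r+1)<T(c,r)$, which is exactly the hypothesis, so no contradiction emerges. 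An induction on $c$ with base case column $1$ cannot close.

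The paper's argument is short once you look the other way. Take the \emph{rightmost} column $c$ containing an inversion: entries $i>j$ with $j$ above $i$, say in rows $r<r'$. Strictness gives a box $(c+1,r)$ with entry $k$; (SYCT1) forces $k>i>j$. Now apply (SYCT3) to the triple with $a=j$ at $(c,r')$ and $b=k$ at $(c+1,r)$: since $a<b$, the box $(c+1,r')$ exists with entry $\ell<k$. But then column $c+1$ has $\ell<k$ with $\ell$ above $k$---an inversion strictly to the right of column $c$, contradicting the choice of $c$. That is the missing idea.
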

\begin{proof}
Since $\SPYCT(\lambda)$ comprises those $T\in \SYCT(\lambda)$ for which entries in the second column increase from bottom to top (Lemma~\ref{lem:SYCT2peakcomp}), it  suffices to show this for $T\in \SYCT(\lambda)$. Suppose for a contradiction the statement is false, and consider the rightmost column of $T\in \SYCT(\lambda)$ in which there are entries $i>j$ with $j$ above $i$. Since $\lambda$ is strict, there exists a box immediately right of $i$, say with entry $k$. We have $k>i$ by (SYCT1), so $k>j$. Now, $j$ and $k$ form two boxes of a triple (\ref{eqn:SYCTtriples}), so by (SYCT3) there must exist a box immediately right of $j$, say with entry $\ell$, and $\ell <k$. But then $k$ and $\ell$ contradict our assumption that $i$ and $j$ were in the rightmost column in which entries don't increase from bottom to top.
\end{proof}

For $\lambda$ a strict composition, define a map $\rect : \SShT(\lambda) \rightarrow \StdFill(D(\lambda))$ by letting $\rect(S)$ be the filling of $D(\lambda)$ obtained by shifting all boxes in each row $i$ of $S$ by $i-1$ units leftwards, along with their entries. (Here, $D(\lambda)$ is the diagram of $\lambda$ interpreted as a composition, i.e., left-justified.)

\begin{lemma}\label{lem:despreserved}
The map $\rect$ is a bijection $\SShT(\lambda) \rightarrow \SPYCT(\lambda)$. Moreover, the descent set of $S\in \SShT(\lambda)$ is equal to the descent set of $\rect(S)\in \SPYCT(\lambda)$.
\end{lemma}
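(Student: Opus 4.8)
The plan is to show that $\rect$ maps $\SShT(\lambda)$ into $\SPYCT(\lambda)$, that its inverse (shift each row $r$ rightwards by $r-1$) maps $\SPYCT(\lambda)$ into $\SShT(\lambda)$, and that descent sets match. Throughout, rows are numbered from the bottom, I use $\SPYCT(\lambda)=\SYCT(\lambda)$ from Lemma~\ref{lem:SYCT2isSYCT} (so membership in $\SPYCT(\lambda)$ means checking (SYCT1)--(SYCT3)), and I repeatedly use that $\lambda$ strict forces $\lambda_r\ge\lambda_{r+1}+1\ge 2$ for every non-top row $r$, and more generally $\lambda_{r'}\ge\lambda_r+(r-r')$ whenever $r'<r$.

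First I would check $\rect(S)\in\SPYCT(\lambda)$ for $S\in\SShT(\lambda)$; write $T=\rect(S)$. Condition (SYCT1) is immediate since $\rect$ only translates rows horizontally. For (SYCT2), the first column of $T$ consists of the leftmost entries of the rows of $S$; in $S$, column-then-row monotonicity shows the leftmost entry of row $r+1$ (sitting in column $r+1$) exceeds the entry of $S$ in column $r+1$ of row $r$ (a box present since $\lambda_r\ge 2$), which in turn exceeds the leftmost entry of row $r$. For (SYCT3) I claim its hypothesis is never met: if $(c,r)$ and $(c+1,r')$ lie in $D(\lambda)$ with $r'<r$, then in $S$ these are the boxes $(c+r-1,r)$ and $(c+r',r')$, the box $(c+r-1,r')$ of $S$ exists since $\lambda_{r'}\ge\lambda_r+(r-r')\ge c+(r-r')$, and
\[T(c+1,r')=S(c+r',r')\le S(c+r-1,r')<S(c+r-1,r)=T(c,r)\]
by row monotonicity ($c+r'\le c+r-1$) and column monotonicity ($r'<r$), so $T(c,r)<T(c+1,r')$ never occurs. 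Hence $T\in\SYCT(\lambda)=\SPYCT(\lambda)$. For bijectivity, $\rect$ is a bijection $\StdFill(\ShD(\lambda))\to\StdFill(D(\lambda))$ on raw fillings, hence injective on $\SShT(\lambda)$, so it remains to see $\rect^{-1}$ sends $\SPYCT(\lambda)$ into $\SShT(\lambda)$. Given $T\in\SPYCT(\lambda)$, let $S=\rect^{-1}(T)$; the resulting shape is exactly $\ShD(\lambda)$ since $\lambda$ is a strict partition. Rows of $S$ increase by (SYCT1). For columns, moving up column $d$ of $S$ from $(d,r)$ to $(d,r+1)$ corresponds in $T$ to moving from $(c,r)$ to $(c-1,r+1)$ with $c=d-r+1\ge 2$, all boxes present in $D(\lambda)$; applying (SYCT3) with lower box $(c,r)$ and upper box $(c-1,r+1)$, if $T(c-1,r+1)<T(c,r)$ then (SYCT3) forces $(c,r+1)\in D(\lambda)$ with $T(c,r+1)<T(c,r)$, contradicting column monotonicity of $T$ (Lemma~\ref{lem:SYCT2isSYCT}); so $T(c-1,r+1)>T(c,r)$, i.e.\ $S(d,r+1)>S(d,r)$. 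Thus $S\in\SShT(\lambda)$ with $\rect(S)=T$.

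For the descent sets, observe that $\rect$ fixes the row of each entry and sends its column $x$ in row $y$ to $x-y+1$, i.e.\ to $1+\delta$ where $\delta=x-y$ is its content in $S$. By the reading word for $\SShT(\lambda)$, $i\in\Des(S)$ iff $i$ is strictly below $i+1$ in $S$, and by Lemma~\ref{lem:SYCTdescent}, $i\in\Des(\rect(S))$ iff $i+1$ is weakly left of $i$ in $\rect(S)$, i.e.\ iff the content of $i+1$ is at most that of $i$ in $S$. So it suffices to prove, for consecutive entries $i,i+1$ of $S$: $i$ is strictly below $i+1$ $\iff$ the content of $i+1$ is at most that of $i$. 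Let $\mu$ be the shifted shape formed by the boxes of $S$ with entries $\le i$ (a strict-partition shifted shape, by monotonicity of $S$). The box of $i$ is a removable corner of $\mu$, namely the last box of its row, of content equal to (length of that row in $\mu$)$\,-1$; the box of $i+1$ is an addable corner of $\mu$, at the end of its row, of content equal to the length of that row in $\mu$ (a freshly started top row counting as length $0$). If $i+1$ is placed in the row of $i$, the contents differ by $1$ and $i+1$ is immediately right of $i$, not above it; if $i+1$ is placed in a higher row, that row is strictly shorter, so content of $i+1$ $\le$ content of $i$; if in a lower row, strictly longer, so content of $i+1$ $>$ content of $i$. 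This yields the equivalence, hence $\Des(S)=\Des(\rect(S))$.

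The main obstacle is this last step: because $\SShT(\lambda)$ and $\SYCT(\lambda)$ are equipped with different reading words (along rows versus down the first column and up the rest), one must translate ``strictly below'' in $S$ into ``weakly left'' in $\rect(S)$, which comes down to the content-versus-row dichotomy for the removable/addable corner pair of $\mu^{(i)}$ carrying the entries $i$ and $i+1$. The (SYCT3) bookkeeping in the bijectivity argument is also somewhat fiddly, but routine once the correct boxes are matched to the statement of (SYCT3).
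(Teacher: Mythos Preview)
Your proof is correct. The bijectivity argument is essentially identical to the paper's: both verify that $\rect(S)$ satisfies (SYCT1)--(SYCT3) by showing the hypothesis of (SYCT3) never holds (equivalently, that $a>b$ in every triple), and both use (SYCT3) together with the column-increasing property of $\SPYCT(\lambda)$ from Lemma~\ref{lem:SYCT2isSYCT} to show the inverse lands in $\SShT(\lambda)$.

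For descent preservation you take a different, slightly more structured route. The paper argues directly with positions: if $i+1$ is strictly above $i$ in $S$ then row and column monotonicity force $i+1$ to be weakly left of $i$ in $S$, and since $\rect$ shifts higher rows farther left, $i+1$ remains weakly left in $\rect(S)$; the ascent case is symmetric. You instead exploit the observation that the column of an entry in $\rect(S)$ equals one plus its content in $S$, reducing the question to showing that $i+1$ lies in a higher row of $S$ if and only if its content is at most that of $i$, which you handle via the removable/addable corner description of the boxes of $i$ and $i+1$ in the intermediate shifted shape. Your argument is a bit more machinery-heavy but has the advantage of making explicit the bijection between diagonals of $\ShD(\lambda)$ and columns of $D(\lambda)$, which is really what drives the correspondence; the paper's argument is shorter but leaves that structural fact implicit.
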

\begin{proof}
Let $S\in \SShT(\lambda)$. Since entries increase along rows of $S$, entries also increase along rows of $\rect(S)$, so (SYCT1) holds for $\rect(S)$. Define the $i$th \emph{diagonal} of $S$ to be the boxes $(i,1), (i+1,2), \ldots $ (i.e., the boxes of $S$ that become column $i$ in $\rect(S)$). Since entries of $S$ increase along rows and up columns, entries of $S$ must also increase from bottom to top along diagonals. Thus entries increase from bottom to top in all columns of $\rect(S)$, so (SYCT2) holds for $\rect(S)$. In any triple in $\rect(S)$, the boxes with entries $a,b$ (\ref{eqn:SYCTtriples}) correspond to boxes with entries $a,b$ in $S$ with $b$ strictly below and weakly left of $a$. Since entries increase along rows and up columns of $S$ we must have $a>b$, so (SYCT3) holds for $\rect(S)$. Therefore $\rect(S)\in \SPYCT(\lambda)$.

Conversely, let $T\in \SPYCT(\lambda)$ and let $S\in \StdFill(\ShD(\lambda))$ be such that $T=\rect(S)$. Since all columns of $T$ increase from bottom to top (Lemma~\ref{lem:SYCT2isSYCT}), we can not have $c<b$ in any triple, so we must have $a>b$ in all triples. This implies in particular that entries of $T$ must increase from bottom to top in all \emph{off-diagonals}, where the $i$th off-diagonal in $T$ is the boxes $(i,1), (i-1,2), \ldots$ (i.e., the boxes of $T$ that become column $i$ in $S$). Thus in $S$, entries increase from bottom to top in all columns, and clearly entries increase from left to right in all rows. Hence $S\in \SShT(\lambda)$, and $\rect$ is is a bijection $\SShT(\lambda) \rightarrow \SPYCT(\lambda)$.

Let $S\in \SShT(\lambda)$ and suppose $i$ is a descent in $S$. Then $i+1$ is strictly above $i$ in $S$, which implies $i+1$ must also be weakly left of $i$ in $S$, since if $i+1$ were strictly right of $S$ then the entry of $S$ in the row containing $i$ and column containing $i+1$ would have to be larger than $i$ and smaller than $i+1$, a contradiction. Therefore, in $\rect(S)$ we have $i+1$ weakly (in fact, strictly) left of $i$, since boxes in higher rows of $S$ move further left than boxes in lower rows under $\rect$. Hence $i$ is a descent in $\rect(S)$. Now suppose $i$ is an ascent in $S$. Then $i+1$ is weakly below $i$ in $S$, which also implies $i+1$ must be strictly right of $i$ in $S$. Then in $\rect(S)$ we have $i+1$ strictly right of $i$, since boxes in higher rows of $S$ move further left than boxes in lower rows under $\rect$. Hence $i$ is an ascent in $\rect(S)$. 
\end{proof}

\begin{example}\label{ex:yqsisom}
Let $\lambda = (4,3,1)$. The elements of $\SPYCT(\lambda)$, along with their descent sets, are shown below. Note these are the images of the elements of $\SShT(\lambda)$ (shown in Example~\ref{ex:SchurQ}) under $\rect$.
\[\begin{array}{c@{\hskip2\cellsize}c@{\hskip2\cellsize}c@{\hskip2\cellsize}c@{\hskip2\cellsize}c@{\hskip2\cellsize}c}
\tableau{ 8 \\  5 & 6 & 7 \\ 1 & 2 & 3 & 4 } & \tableau{  7 \\  5 & 6 & 8 \\ 1 & 2 & 3 & 4 } & \tableau{  8 \\  4 & 6 & 7 \\ 1 & 2 & 3 & 5 } & \tableau{  7 \\  4 & 6 & 8 \\ 1 & 2 & 3 & 5 } & \tableau{  8 \\  4 & 5 & 7 \\ 1 & 2 & 3 & 6 } & \tableau{  7 \\  4 & 5 & 8 \\ 1 & 2 & 3 & 6 }  \\ \\  
\{4, 7\} & \{4, 6\} & \{3, 5, 7\} & \{3, 5, 6\} & \{3, 6, 7\} & \{3, 6\}  \\ \\
\tableau{  6 \\  4 & 5 & 8 \\ 1 & 2 & 3 & 7 } &  \tableau{  8 \\  3 & 6 & 7 \\ 1 & 2 & 4 & 5 } &  \tableau{  7 \\  3 & 6 & 8 \\ 1 & 2 & 4 & 5 } &  \tableau{  8 \\  3 & 5 & 7 \\ 1 & 2 & 4 & 6 } &  \tableau{  7 \\  3 & 5 & 8 \\ 1 & 2 & 4 & 6 } &  \tableau{  6 \\  3 & 5 & 8 \\ 1 & 2 & 4 & 7 }    
\\ \\ \{3, 5, 7\} & \{2, 5, 7\} & \{2, 5, 6\} &  \{2, 4, 6, 7\} & \{2, 4, 6\} & \{2, 4, 5, 7\} 
\end{array}\]
\end{example}

\begin{theorem}\label{thm:moduleisom}
Let $\lambda$ be a strict partition of $n$. Then $\widetilde{{\bf N}}_{\SShT(\lambda)}$ is isomorphic as $HCl_n(0)$-supermodules to $\widetilde{{\bf N}}_{\SPYCT(\lambda)}$. 
\end{theorem}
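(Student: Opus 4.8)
The plan is to promote the combinatorial bijection $\rect \colon \SShT(\lambda) \to \SPYCT(\lambda)$ of Lemma~\ref{lem:despreserved} to an isomorphism of modules, first at the level of $H_n(0)$-modules and then at the level of $HCl_n(0)$-supermodules by functoriality of induction. So I would begin by defining $\phi \colon {\bf N}_{\SShT(\lambda)} \to {\bf N}_{\SPYCT(\lambda)}$ on basis elements by $\phi(S) = \rect(S)$ and extending linearly. Since $\rect$ is a bijection between the two indexing sets, $\phi$ is automatically a linear isomorphism, and the entire content is checking that $\phi$ intertwines the operators $\pi_1, \ldots, \pi_{n-1}$ defined by (\ref{eqn:0Hecke}).

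That check follows the three cases of (\ref{eqn:0Hecke}). The descent case is handled by Lemma~\ref{lem:despreserved}, which gives $\Des(S) = \Des(\rect(S))$: if $i \in \Des(S)$ then $\pi_i S = -S$ and $\pi_i \rect(S) = -\rect(S)$. For the two ascent cases I would isolate two observations. First, $\rect$ preserves the attacking/nonattacking status of an ascent: by Lemma~\ref{lem:SShTascentcompatible} an ascent $i$ in $S$ is attacking iff $i+1$ is immediately right of $i$ in $S$; because $\rect$ shifts the boxes of each row of $S$ leftward by a row-dependent amount, carrying entries along, boxes lying in the same row and horizontally adjacent in $S$ remain horizontally adjacent in $\rect(S)$, so $i+1$ is immediately right of $i$ in $S$ iff it is in $\rect(S)$, and (using $\SPYCT(\lambda) = \SYCT(\lambda)$ from Lemma~\ref{lem:SYCT2isSYCT} together with Lemmas~\ref{lem:SYCTascentcompatible} and~\ref{lem:SYCT2compatible}) this last condition is exactly the characterization of attacking ascents in $\SPYCT(\lambda)$. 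Second, $\rect \circ s_i = s_i \circ \rect$, since $\rect$ rearranges boxes according to the shape of $\lambda$ only, while $s_i$ exchanges the entries $i$ and $i+1$, and these commute. Combining the two: for an attacking ascent $i$ of $S$, $\phi(\pi_i S) = \phi(0) = 0 = \pi_i \rect(S) = \pi_i \phi(S)$; for a nonattacking ascent $i$ of $S$, $\phi(\pi_i S) = \phi(s_i S) = \rect(s_i S) = s_i \rect(S) = \pi_i \rect(S) = \pi_i \phi(S)$. Hence $\phi$ is an isomorphism of $H_n(0)$-modules, ${\bf N}_{\SShT(\lambda)} \cong {\bf N}_{\SPYCT(\lambda)}$.

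To conclude, I would use that $\widetilde{{\bf N}}_{\StdTab(D)}$ is by construction $\mathrm{Ind}_{H_n(0)}^{HCl_n(0)} {\bf N}_{\StdTab(D)}$, and that induction is a functor, hence sends isomorphic $H_n(0)$-modules to isomorphic $HCl_n(0)$-supermodules; applying it to $\phi$ yields $\widetilde{{\bf N}}_{\SShT(\lambda)} \cong \widetilde{{\bf N}}_{\SPYCT(\lambda)}$. As an alternative to invoking functoriality, one could instead define $\widetilde{\phi}(c_X S) = c_X \rect(S)$ directly, verify commutation with each $c_j$ (immediate, since the sign in $c_j(c_X T) = \pm c_{X \triangle \{j\}} T$ depends only on $X$ and $j$) and with each $\pi_i$ by matching the cases of Proposition~\ref{prop:Dcrossrelations} for the diagrams $\ShD(\lambda)$ and $D(\lambda)$, using the same two observations above; this is slightly more laborious but self-contained.

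The only genuine point of care is the preservation of attacking/nonattacking status under $\rect$, which reduces to the bookkeeping fact that the row-dependent leftward shifts defining $\rect$ do not disturb horizontal adjacency within a row; I expect this, rather than any representation-theoretic subtlety, to be the main (mild) obstacle, and the equality $\SPYCT(\lambda) = \SYCT(\lambda)$ from Lemma~\ref{lem:SYCT2isSYCT} is what lets the $\SYCT$-side characterization of attacking ascents apply.
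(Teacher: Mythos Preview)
Your proposal is correct and takes essentially the same approach as the paper: both use $\rect$ to define the isomorphism, invoke Lemma~\ref{lem:despreserved} for descent preservation, and use the ``immediately right of'' characterization of attacking ascents on both sides. The paper opts for your alternative route, defining $\phi(c_X S) = c_X\rect(S)$ directly on the induced supermodule, whereas your primary route establishes the $H_n(0)$-isomorphism first and then invokes functoriality of induction---a perfectly valid (and arguably cleaner) organization of the same argument.
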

\begin{proof}
Define a map $\phi:\widetilde{{\bf N}}_{\SShT(\lambda)} \rightarrow \widetilde{{\bf N}}_{\SPYCT(\lambda)}$ by setting $\phi(c_XS) = c_X\rect(S)$ and extending linearly. It is clear this map is degree-preserving and that for any $1\le j \le n$, we have $\phi(c_jc_XS) = c_j\phi(c_XS)$. To see $\phi(\pi_ic_XS) = \pi_i\phi(c_XS)$ for any $1\le i \le n-1$, note that $S$ and $\rect(S)$ have the same descent set by Lemma~\ref{lem:despreserved}, and moreover $i+1$ is immediately right of $i$ in $S$ if and only if $i+1$ is immediately right of $i$ in $\rect(S)$, so an ascent $i$ is attacking in $S$ if and only if $i$ is attacking in $\rect(S)$. 
\end{proof}

\begin{corollary}
The peak Young quasisymmetric Schur function basis of $\PQSym$ contains the Schur $Q$-functions.
\end{corollary}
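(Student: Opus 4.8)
The plan is to read the corollary off from the supermodule isomorphism of Theorem~\ref{thm:moduleisom}, together with the peak characteristic formulas and the basis result of Theorem~\ref{thm:basis}. The crucial point is that the peak characteristic $\widetilde{ch}$ factors through the Grothendieck group $\mathcal{\widetilde{G}}$, so isomorphic $HCl_n(0)$-supermodules have the same image under $\widetilde{ch}$.

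First I would fix a strict partition $\lambda$ of $n$. As noted just above Theorem~\ref{thm:moduleisom}, $\lambda$ is also a peak composition, so $\widetilde{\yqs}_\lambda$ is defined, and by Theorem~\ref{thm:peakcharYQS} we have $\widetilde{\yqs}_\lambda = \widetilde{ch}([\widetilde{{\bf N}}_{\SPYCT(\lambda)}])$. On the other hand, Theorem~\ref{thm:peakcharSchurQ} gives $Q_\lambda = \widetilde{ch}([\widetilde{{\bf N}}_{\SShT(\lambda)}])$. By Theorem~\ref{thm:moduleisom} the two supermodules $\widetilde{{\bf N}}_{\SShT(\lambda)}$ and $\widetilde{{\bf N}}_{\SPYCT(\lambda)}$ are isomorphic, hence determine the same class in $\mathcal{\widetilde{G}}$; applying $\widetilde{ch}$ yields $Q_\lambda = \widetilde{\yqs}_\lambda$. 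Thus the Schur $Q$-functions are exactly the peak Young quasisymmetric Schur functions indexed by strict partitions.

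Finally I would invoke Theorem~\ref{thm:basis}: the peak Young quasisymmetric Schur functions $\{\widetilde{\yqs}_\alpha\}$, as $\alpha$ ranges over peak compositions, form a basis of $\PQSym$. Since every strict partition is a peak composition, $\{Q_\lambda : \lambda \text{ a strict partition}\} = \{\widetilde{\yqs}_\lambda : \lambda \text{ a strict partition}\}$ is a subset of this basis, which is the claim. (Alternatively, one may bypass the representation theory entirely: $\rect$ is descent-preserving by Lemma~\ref{lem:despreserved}, hence peak-preserving, so it directly identifies the peak expansion $\sum_{S\in\SShT(\lambda)}K_{\comp_n(\Peak(S))}$ of $Q_\lambda$ with the peak expansion of $\widetilde{\yqs}_\lambda$ from Theorem~\ref{thm:peakcharYQS}.)

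The real difficulty is entirely upstream, in establishing that $\rect$ is a descent-preserving bijection $\SShT(\lambda)\to\SPYCT(\lambda)$ (Lemma~\ref{lem:despreserved}) and hence in Theorem~\ref{thm:moduleisom}; once those are in hand, the corollary is a one-line bookkeeping argument. The only point worth checking with care is the compatibility of the two descent conventions — reading rows top-to-bottom for $\SShT(\lambda)$ versus the column reading word for $\SPYCT(\lambda)$ — under $\rect$, but this is precisely what Lemma~\ref{lem:despreserved} records, so no further work is required here.
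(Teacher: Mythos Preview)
Your proposal is correct and follows essentially the same route as the paper: chain together Theorem~\ref{thm:peakcharYQS}, Theorem~\ref{thm:moduleisom}, and Theorem~\ref{thm:peakcharSchurQ} to obtain $Q_\lambda = \widetilde{\yqs}_\lambda$ for strict partitions $\lambda$. Your explicit invocation of Theorem~\ref{thm:basis} and the alternative combinatorial argument via Lemma~\ref{lem:despreserved} are nice additions, but the core argument is identical to the paper's.
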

\begin{proof}
By Theorem~\ref{thm:peakcharYQS}, we have $\widetilde{ch}([\widetilde{{\bf N}}_{\SPYCT(\lambda)}])=\widetilde{\yqs}_\lambda$. By Theorem~\ref{thm:moduleisom}, we have $\widetilde{ch}([\widetilde{{\bf N}}_{\SPYCT(\lambda)}]) = \widetilde{ch}([\widetilde{{\bf N}}_{\SShT(\lambda)}])$. By Theorem~\ref{thm:peakcharSchurQ}, we have $\widetilde{ch}([\widetilde{{\bf N}}_{\SShT(\lambda)}]) = Q_\lambda$. 
\end{proof}

The isomorphism $\phi$ in Theorem~\ref{thm:moduleisom} descends to an isomorphism of $H_n(0)$-modules between ${\bf N}_{\SShT(\lambda)}$ and ${\bf N}_{\SPYCT(\lambda)}$ and thus an isomorphism between ${\bf N}_{\SShT(\lambda)}$ and ${\bf N}_{\SYCT(\lambda)}$ by Lemma~\ref{lem:SYCT2isSYCT}.  This provides a further connection between Schur $Q$-functions and Young quasisymmetric Schur functions.

\begin{corollary}\label{cor:charsareyqs} 
For $\lambda$ a strict partition of $n$, the quasisymmetric characteristics of the $H_n(0)$-modules ${\bf N}_{\SShT(\lambda)}$ are the Young quasisymmetric Schur functions $\yqs_\lambda$. 
\end{corollary}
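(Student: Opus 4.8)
The plan is to reduce the statement to the already-established formula $ch([{\bf N}_{\SYCT(\alpha)}]) = \yqs_\alpha$ by transporting everything along the descent-preserving bijection $\rect$. By the first assertion of Theorem~\ref{thm:peakcharSchurQ}, $ch([{\bf N}_{\SShT(\lambda)}]) = \sum_{S\in\SShT(\lambda)}F_{\comp_n(\Des(S))}$. By Lemma~\ref{lem:despreserved}, $\rect$ is a bijection $\SShT(\lambda)\to\SPYCT(\lambda)$ with $\Des(S) = \Des(\rect(S))$ for every $S$, so reindexing the sum gives $ch([{\bf N}_{\SShT(\lambda)}]) = \sum_{T\in\SPYCT(\lambda)}F_{\comp_n(\Des(T))}$.

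The second step is to recognise the right-hand side as $\yqs_\lambda$. Since $\lambda$ is a strict partition, Lemma~\ref{lem:SYCT2isSYCT} gives $\SPYCT(\lambda)=\SYCT(\lambda)$, and the two families are equipped with the same reading word (as noted before Lemma~\ref{lem:SYCT2compatible}), hence the same descent sets. Thus $ch([{\bf N}_{\SShT(\lambda)}]) = \sum_{T\in\SYCT(\lambda)}F_{\comp_n(\Des(T))} = \yqs_\lambda$ by the definition (\ref{eqn:yqs}) of $\yqs_\lambda$.

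A more structural phrasing of the same argument — and the one suggested by the paragraph preceding the corollary — is to observe that the supermodule isomorphism $\phi$ of Theorem~\ref{thm:moduleisom} restricts on the span of $\{c_\emptyset S : S\in\SShT(\lambda)\}$ to an isomorphism of $H_n(0)$-modules ${\bf N}_{\SShT(\lambda)}\cong{\bf N}_{\SPYCT(\lambda)}$; the fact that $\phi$ intertwines the $\pi_i$-action uses only that $\rect$ preserves descents and the ``immediately right'' relation, exactly as verified in the proof of Theorem~\ref{thm:moduleisom}, and requires nothing about the Clifford generators. One then applies $\SPYCT(\lambda)=\SYCT(\lambda)$ and takes quasisymmetric characteristics.

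I do not expect a genuine obstacle: all the substantive content (Lemmas~\ref{lem:despreserved} and~\ref{lem:SYCT2isSYCT}, together with the formula for $ch$ of a diagram module) is already in place. The only point deserving a moment's care is that the diagram module ${\bf N}_{\SPYCT(\lambda)}$ is literally equal to ${\bf N}_{\SYCT(\lambda)}$, not merely isomorphic to it: this needs both the set equality of the two tableau families for strict $\lambda$ and the coincidence of their reading words, so that the operators $\pi_i$ defined by (\ref{eqn:0Hecke}) are the same on the two spans.
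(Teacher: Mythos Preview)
Your proposal is correct and takes essentially the same approach as the paper: the paper's own proof is the paragraph immediately preceding the corollary, which observes that $\phi$ descends to an $H_n(0)$-module isomorphism ${\bf N}_{\SShT(\lambda)}\cong{\bf N}_{\SPYCT(\lambda)}={\bf N}_{\SYCT(\lambda)}$ (via Lemma~\ref{lem:SYCT2isSYCT}), exactly as in your second paragraph. Your first paragraph is simply the characteristic-level computation of the same argument.
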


\begin{remark}
One can also use the diagram modules framework to define $H_n(0)$-modules on standard Young tableaux of partition shape $\lambda$, where $i$ is a descent of a standard Young tableau if $i$ is strictly below $i+1$. By a theorem of Gessel \cite{Gessel}, the Schur function for $\lambda$ is the sum of fundamental quasisymmetric functions indexed by descent sets of standard Young tableaux of shape $\lambda$. Therefore, the quasisymmetric characteristics of the diagram $H_n(0)$-modules on standard Young tableaux are the Schur functions. In comparison, by Corollary~\ref{cor:charsareyqs} the quasisymmetric characteristics of the diagram $H_n(0)$-modules on standard shifted tableaux are Young quasisymmetric Schur functions. 
\end{remark}

%
\section{A common framework for $0$-Hecke modules for bases of $\QSym$}
%

Recently, many families of $H_n(0)$-modules have been constructed so that their images under the quasisymmetric characteristic map are noteworthy bases of $\QSym$. This has been done in \cite{BBSSZ} for the dual immaculate functions introduced in \cite{BBSSZ:2},  in \cite{TvW:1} for the quasisymmetric Schur functions introduced in \cite{HLMvW11:QS}, in \cite{Searles:0Hecke} for the extended Schur functions introduced in \cite{Assaf.Searles:3},  in \cite{Bardwell.Searles} for the Young row-strict quasisymmetric Schur functions introduced in \cite{Mason.Niese}, and in \cite{NSvWVW:0Hecke} for the row-strict extended Schur functions and also for the row-strict dual immaculate functions introduced in \cite{NSvWVW:rowstrict}. In each case, the module is defined on the span of a family of standard tableaux whose underlying shapes are diagrams of compositions. More generally, $H_n(0)$-modules have been also defined on permuted versions of the tableaux for quasisymmetric Schur functions \cite{TvW:2}, Young quasisymmetric Schur functions \cite{CKNO:2}, and Young row-strict quasisymmetric Schur functions \cite{JKLO}.   

In this section, we define diagram $H_n(0)$-modules $\widehat{{\bf N}}_{\StdTab(D)}$ as a minor variant on ${\bf N}_{\StdTab(D)}$ that matches the conventions used in constructing all of the modules mentioned above. We then show that each of these modules is obtained as some diagram module $\widehat{{\bf N}}_{\StdTab(D)}$. We also consider the projective indecomposable $H_n(0)$-modules, which are interpreted in terms of standard tableaux of ribbon shape in \cite{Huang}, and show that these modules are obtained as certain ${\bf N}_{\StdTab(D)}$.  
Finally, we consider another general family of $H_n(0)$-modules known as \emph{weak Bruhat interval modules} \cite{JKLO}, and we show these modules are generalized by the families ${\bf N}_{\StdTab(D)}$ and $\widehat{{\bf N}}_{\StdTab(D)}$. 

\subsection{A variant on ${\bf N}_{\StdTab(D)}$}\label{subsec:variant}

The aforementioned modules in \cite{Bardwell.Searles,  BBSSZ, CKNO:2, JKLO, NSvWVW:0Hecke, Searles:0Hecke, TvW:1, TvW:2} are defined using the generators $\hat{\pi}_i = \pi_i+1$ of $H_n(0)$ instead of $\pi_i$. The first relation for the $0$-Hecke algebra ((\ref{eqn:qHecke}) at $q=0$) then becomes $\hat{\pi}_i^2 = \hat{\pi}_i$, while the other two relations remain the same: $\hat{\pi}_i \hat{\pi}_j = \hat{\pi}_j \hat{\pi}_i$ when $|i-j|\ge 2$, and $\hat{\pi}_i \hat{\pi}_{i+1} \hat{\pi}_i = \hat{\pi}_{i+1} \hat{\pi}_i\hat{\pi}_{i+1}$.

Roughly speaking, the role of descents and ascents becomes reversed under this change. To wit, in comparison to (\ref{eqn:irreps}), the action of $\hat{\pi}_i$ on simple $H_n(0)$-modules is 
\begin{align}\label{eqn:hatirreps}
\hat{\pi}_i(v_\alpha) = \begin{cases} 0 & \mbox{ if } i\in \Des(\alpha) \\
					       v_\alpha & \mbox{ if } i \notin \Des(\alpha).
					       \end{cases}
\end{align}

We can exactly recover the aforementioned modules in \cite{Bardwell.Searles, BBSSZ, CKNO:2, JKLO, NSvWVW:0Hecke, Searles:0Hecke, TvW:1, TvW:2} 
under the diagram modules framework by an analogous modification of the construction of ${\bf N}_{\StdTab(D)}$. For $T\in \StdTab(D)$, we say that a descent $i$ in $T$ is \emph{nonattacking} if $s_iT\in \StdTab(D)$ and \emph{attacking} otherwise. Define \emph{descent-compatibility} for $\StdTab(D)$ by replacing ``ascent'' with ``descent'' in Definition~\ref{def:ascentcompatible}, and replace (\ref{eqn:0Hecke}) by
\begin{equation}\label{eqn:hat0Hecke}
\hat{\pi}_iT = \begin{cases} T & \mbox{ if } i\notin \Des(T)  \\
                                        0 & \mbox{ if } i \in \Des(T), i \mbox{ is attacking } \\
                                        s_iT & \mbox{ if } i \in \Des(T), i \mbox{ is nonattacking. }
                                        \end{cases}
\end{equation}
That (\ref{eqn:hat0Hecke}) defines a $0$-Hecke action on the $\mathbb{C}$-span of $\StdTab(D)$ can be proved similarly to Theorem~\ref{thm:0Hecke}. Let $\widehat{{\bf N}}_{\StdTab(D)}$ denote the resulting $H_n(0)$-module. Then, following Lemma~\ref{lem:partialorder} and Theorem~\ref{thm:char}, we obtain 
\begin{equation}\label{eqn:hatchar}
ch([\widehat{{\bf N}}_{\StdTab(D)}]) = \sum_{T\in \StdTab(D)} F_{\comp_n(\Des(T))}.
\end{equation}

\subsection{Recovering known $H_n(0)$-modules as diagram modules}\label{subsec:recover}

For ease of exposition, we treat the modules mentioned at the beginning of this section in four separate cases.

\subsubsection{Modules for (row-strict) dual immaculate and extended Schur functions}

\begin{definition}\label{def:DIextend}
Let $\alpha\vDash n$. 

\begin{enumerate}
\item The \emph{standard immaculate tableaux} $\SIT(\alpha)$ \cite{BBSSZ} and \emph{standard row-strict immaculate tableaux} $\SRIT(\alpha)$ \cite{NSvWVW:0Hecke} are the standard fillings of $D(\alpha)$ that increase from left to right in each row and from bottom to top in the first column. 

\item The \emph{standard extended tableaux} $\SET(\alpha)$ \cite{Assaf.Searles:3, Searles:0Hecke}  and \emph{standard row-strict extended tableaux} $\SRET(\alpha)$ \cite{NSvWVW:0Hecke} are the standard fillings of $D(\alpha)$ that increase from left to right in each row and from bottom to top in each column. 
\end{enumerate}
\end{definition}

Although, e.g., $\SIT(\alpha)$ and $\SRIT(\alpha)$ have the same definition, we distinguish them because they will be defined to have different descent sets and thus give rise to distinct $H_n(0)$-modules. Also, since we number rows of $D(\alpha)$ from bottom to top, we note our description of $\SIT(\alpha)$ differs from that in \cite{BBSSZ} by a reflection in the $x$-axis.

Descent sets for each family are defined in terms of relative positions of boxes, and these can easily be seen to arise from a reading word.

\begin{definition}\label{def:DIextenddescents}
Let $\alpha\vDash n$. 
\begin{enumerate}
\item For $T\in \SIT(\alpha)$ or $T\in \SET(\alpha)$, $i\in \Des(T)$ if $i+1$ is strictly above $i$ in $T$ \cite[Definition 2.3]{BBSSZ}, \cite{Searles:0Hecke}. This is manifested by reading entries from left to right along rows, starting at the top row and proceeding downwards. 

\item For $T\in \SRIT(\alpha)$ or $T\in \SRET(\alpha)$, $i\in \Des(T)$ if $i+1$ is weakly below $i$ in $T$ \cite[Theorem 3.2 and Proposition 7.6]{NSvWVW:0Hecke}. This is manifested by reading entries from right to left along rows, starting at the bottom row and proceeding upwards. 
\end{enumerate}
\end{definition}

Note that for $\SET(\alpha)$, in \cite{Searles:0Hecke} the condition $i+1$ weakly left of $i$ is used instead. However this is easily seen to be equivalent to $i+1$ strictly above $i$, as pointed out in \cite[Theorem 7.4]{NSvWVW:0Hecke}.

\begin{theorem}\label{thm:previousmodulesSITSET}
Let $\alpha \vDash n$. For $\StdTab(D)$ equal to, respectively, $\SIT(\alpha)$, $\SET(\alpha)$, $\SRIT(\alpha)$, $\SRET(\alpha)$, the $H_n(0)$-module defined in, respectively, \cite[(4)]{BBSSZ} for dual immaculate functions, \cite[Theorem 3.5]{Searles:0Hecke} for extended Schur functions, \cite[(4)]{NSvWVW:0Hecke} for row-strict dual immaculate functions, and \cite[Lemma 7.2]{NSvWVW:0Hecke} for row-strict extended Schur functions, is precisely $\widehat{{\bf N}}_{\StdTab(D)}$.  
\end{theorem}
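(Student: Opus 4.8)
The plan is to show that in each of the four cases, the $H_n(0)$-module constructed in the cited reference has the same underlying vector space and the same generator action as $\widehat{{\bf N}}_{\StdTab(D)}$, so that the two modules literally coincide. First I would recall that, by construction, $\widehat{{\bf N}}_{\StdTab(D)}$ has as its basis the set $\StdTab(D)$, and the generators $\hat\pi_i = \pi_i + 1$ act by \eqref{eqn:hat0Hecke}: fixing $T$ when $i$ is an ascent, killing $T$ when $i$ is an attacking descent, and sending $T$ to $s_iT$ when $i$ is a nonattacking descent. So for each family it remains only to check two things: (i) that the descent set of a tableau used in the reference coincides with the descent set $\Des(T)$ arising from the reading word specified in Definition~\ref{def:DIextenddescents} (this is where the parenthetical remarks about which reading order ``manifests'' each descent convention do the work), and (ii) that the trichotomy ``attacking descent / nonattacking descent / ascent'' in $\widehat{{\bf N}}_{\StdTab(D)}$ matches the three cases in the reference's formula for $\hat\pi_i$.

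For the descent sets, in each case Definition~\ref{def:DIextenddescents} already asserts that reading along rows in the prescribed order produces exactly the positional descent condition ($i+1$ strictly above $i$, resp.\ weakly below $i$) that the reference uses, so (i) is immediate once one unwinds the reading word; I would simply verify the reading order claim directly (e.g.\ for $\SIT(\alpha)$, if $i+1$ is strictly above $i$ then reading top rows before bottom rows puts $i+1$ before $i$ in $\rw(T)$, hence $i \in \Des(T)$; conversely if $i+1$ is weakly below $i$, and they are in distinct rows this is clear, while if they are in the same row then $i+1$ is to the right of $i$ by the row-increasing condition, so $i$ is still read before $i+1$). For (ii), the point is that $\StdTab(D)$ being \emph{descent-compatible} guarantees the operators $\hat\pi_i$ are well-defined (via the analogue of Theorem~\ref{thm:0Hecke}), and in each of the four families the reference's action formula is exactly: $\hat\pi_i T = T$ if $i \notin \Des(T)$; $\hat\pi_i T = 0$ if $i \in \Des(T)$ and $s_iT \notin \StdTab(D)$; $\hat\pi_i T = s_iT$ if $i \in \Des(T)$ and $s_iT \in \StdTab(D)$. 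Matching this against \eqref{eqn:hat0Hecke} and the definition of attacking/nonattacking descents finishes the identification. One should also confirm descent-compatibility holds for each family — but this is immediate from Lemma~\ref{lem:position} together with the observation, already implicit in the descent conventions, that whether $s_iT \in \StdTab(D)$ depends only on the relative position of the boxes of $i$ and $i+1$ (for instance, for $\SIT(\alpha)$ a descent $i$ is attacking precisely when $i+1$ is immediately below $i$ in the first column, a purely positional condition).

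The main obstacle, such as it is, is bookkeeping rather than mathematics: one must carefully align conventions across five different papers (including the reflection-in-the-$x$-axis discrepancy noted after Definition~\ref{def:DIextend}, and the ``weakly left'' versus ``strictly above'' equivalence for $\SET(\alpha)$ flagged after Definition~\ref{def:DIextenddescents}), and confirm in each case that the cited action formula, once translated into our $\hat\pi_i$ normalization and our row-numbering convention, is verbatim \eqref{eqn:hat0Hecke}. I would therefore organize the proof as a short uniform paragraph establishing the reading-word claims of Definition~\ref{def:DIextenddescents} and the positional nature of the attacking/nonattacking dichotomy, followed by a one-line-per-family remark that the reference's module structure is then exactly $\widehat{{\bf N}}_{\StdTab(D)}$, with explicit pointers to the relevant equation numbers in \cite{BBSSZ, Searles:0Hecke, NSvWVW:0Hecke}. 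No genuinely hard step is expected; descent-compatibility for all four families follows from Lemma~\ref{lem:position}.
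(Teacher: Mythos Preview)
Your approach is essentially the same as the paper's: classify attacking descents positionally for each family, invoke Lemma~\ref{lem:position} for descent-compatibility, and then match the cited module formulas against \eqref{eqn:hat0Hecke}. One small slip to fix: in your $\SIT(\alpha)$ example an attacking descent has $i+1$ immediately \emph{above} $i$ in the first column (not below), since first-column entries increase upward; the paper phrases this simply as ``$i$ and $i+1$ are both in the first column.''
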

\begin{proof}
It follows from the descriptions of descents in Definition~\ref{def:DIextenddescents}(1) that a descent $i$ is attacking in $T\in \SIT(\alpha)$ if and only if $i$ and $i+1$ are both in the first column of $T$, and that a descent $i$ is attacking in $T\in \SET(\alpha)$ if and only if $i$ and $i+1$ are both in the same column of $T$. By \cite[Lemma 4.4]{NSvWVW:0Hecke}, a descent $i$ is attacking in $T\in \SRIT(\alpha)$ or $\SRET(\alpha)$ if and only if $i+1$ is immediately right of $i$ in $T$. 

Now, comparing the structure of the modules defined on $\SIT(\alpha)$ in \cite[(4)]{BBSSZ} and on $\SET(\alpha)$ in \cite[Theorem 3.5]{Searles:0Hecke} to this classification of attacking and nonattacking descents for $\SIT(\alpha)$ and $\SET(\alpha)$, and comparing the structure of the modules defined on $\SRIT(\alpha)$ and $\SRET(\alpha)$ in \cite[(4), Lemma 7.2]{NSvWVW:0Hecke} to the classification \cite[Lemma 4.4(2)]{NSvWVW:0Hecke} of attacking and nonattacking descents for $\SRIT(\alpha)$ and $\SRET(\alpha)$, we have that all four of these modules have structure given by
\begin{equation}\label{eqn:firstcase}
\hat{\pi}_iT = \begin{cases} T & \mbox{ if } i\notin \Des(T)  \\
                                        0 & \mbox{ if } i \in \Des(T), i \mbox{ is attacking } \\
                                        s_iT & \mbox{ if } i \in \Des(T), i \mbox{ is nonattacking. }
                                        \end{cases}
\end{equation}

On the other hand, since attacking is determined by relative position of boxes for each of $\SIT(\alpha)$, $\SET(\alpha)$, $\SRIT(\alpha)$, $\SRET(\alpha)$, these families of tableaux are all descent-compatible by Lemma~\ref{lem:position} (which holds for descents as well as ascents). Thus, for $\StdTab(D)$ equal to, respectively, $\SIT(\alpha)$, $\SET(\alpha)$, $\SRIT(\alpha)$, $\SRET(\alpha)$, we have $\widehat{{\bf N}}_{\StdTab(D)}$ is an $H_n(0)$-module. 
Since (\ref{eqn:firstcase}) is the same as (\ref{eqn:hat0Hecke}), these modules $\widehat{{\bf N}}_{\StdTab(D)}$ are the same as those \cite{BBSSZ, NSvWVW:0Hecke, Searles:0Hecke} in each case.
\end{proof}

\subsubsection{Modules for (Young row-strict) quasisymmetric Schur functions}\label{subsubsec:quasisymmetricSchur}

\begin{definition}\label{def:SRCTSYRT}
Let $\alpha\vDash n$.  

\begin{enumerate}
\item The \emph{standard reverse composition tableaux} $\SRCT(\alpha)$ \cite{TvW:1} are the standard fillings $T$ of $D(\alpha)$ that decrease from left to right in each row, increase from bottom to top the first column and satisfy the condition \emph{(triple rule)} that if boxes $(c,r)$ and $(c+1,r')$ for $r<r'$ are in $D(\alpha)$ and $T(c,r)> T(c+1,r')$, then $(c+1,r)$ is in $D(\alpha)$ and $T(c+1,r)>T(c+1,r')$.      

\item The \emph{standard Young row-strict composition tableaux} $\SYRT(\alpha)$ \cite{Mason.Niese} are the same as $\SYCT(\alpha)$ (Definition~\ref{def:SYCT}). 
\end{enumerate}
\end{definition}
Since we number rows of $D(\alpha)$ from bottom to top, our definition of $\SRCT(\alpha)$ below differs from that in \cite{TvW:1} by a reflection in the $x$-axis.

Descent sets for each family are defined in terms of relative positions of boxes, and can be seen to arise from a reading word.

\begin{definition}\label{def:SRCTSYRTdescents}
Let $\alpha\vDash n$.
\begin{enumerate}
\item For $T\in \SRCT(\alpha)$, $i\in \Des(T)$ if $i+1$ is weakly right of $i$ in $T$ \cite{TvW:1}. This is manifested by reading entries from bottom to top in columns, starting at the rightmost column and proceeding leftwards, except the first column (the last to be read) is read from top to bottom.

\item For $T\in \SYRT(\alpha)$, $i\in \Des(T)$ if $i+1$ is strictly right of $i$ in $T$ \cite{Bardwell.Searles}. This is manifested by reading entries from top to bottom in columns, starting at the rightmost column and proceeding leftwards, except the first column (the last to be read) is read from bottom to top.
\end{enumerate}
\end{definition}

\begin{lemma}\label{lem:attackingdescents}
Let $\alpha\vDash n$.
\begin{enumerate}
\item A descent $i$ is attacking in $T\in \SRCT(\alpha)$ if and only if either $i+1$ and $i$ are both in the first column, or $i+1$ is below $i$ in a column other than the first column, or $i+1$ is strictly above $i$ in the column immediately right of $i$.
\item A descent $i$ is attacking in $T\in \SYRT(\alpha)$ if and only if $i+1$ is immediately right of $i$.
\end{enumerate}
\end{lemma}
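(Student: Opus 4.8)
The plan is to handle the two parts separately, in each case using the explicit reading-word characterization of descents from Definition~\ref{def:SRCTSYRTdescents} together with the defining conditions on the tableaux, and arguing about exactly when $s_iT$ still lies in the relevant family. For part (2) the argument is essentially identical to the proof of ascent-compatibility already carried out for $\SYCT(\alpha)$ in Lemma~\ref{lem:SYCTascentcompatible}, with ``ascent'' and ``descent'' swapped: first, if $i$ is a descent in $T\in\SYRT(\alpha)$, then by Definition~\ref{def:SRCTSYRTdescents}(2) $i+1$ is strictly right of $i$. If $i+1$ is immediately right of $i$ then $i$ and $i+1$ lie in the same row with $i+1$ the larger entry, so $s_iT$ violates row-increase (SYCT1), hence $i$ is attacking. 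Conversely, if $i+1$ is strictly right of $i$ but not immediately right, then $i$ and $i+1$ are in different rows and different columns; moreover $i+1$ cannot be in the column immediately right of $i$ and in a lower row, as then $i,i+1$ would be two entries of a triple violating (SYCT3). So $i$ and $i+1$ do not simultaneously participate in any row, column, or triple relation, and exchanging them preserves all of (SYCT1)--(SYCT3); thus $s_iT\in\SYRT(\alpha)$ and $i$ is nonattacking.

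For part (1), I would likewise start from Definition~\ref{def:SRCTSYRTdescents}(1): if $i$ is a descent in $T\in\SRCT(\alpha)$, then $i+1$ is weakly right of $i$, so relative to $i$ the entry $i+1$ is either in the same column as $i$ (necessarily below $i$, since a descent forces $i+1$ to be read after $i$ within that column's bottom-to-top scan, except the first column is scanned top-to-bottom so $i+1$ is \emph{above} $i$ there) or strictly to the right of $i$. This splits into the three listed position types: (a) $i,i+1$ both in the first column; (b) $i+1$ below $i$ in a non-first column; (c) $i+1$ strictly above $i$ in the column immediately right of $i$; plus the complementary ``far to the right'' positions. I would then check in cases (a), (b), (c) that $s_iT$ fails one of the $\SRCT$ conditions: in (a), swapping makes two equal-length bottom rows of the subdiagram on $\{1,\dots,i+1\}$ impossible, or more directly violates first-column increase; in (b), swapping violates column-increase outside the first column (which follows from the triple rule — entries increase up every column); in (c), swapping $i$ and $i+1$ creates entries $i+1$ (now in the left column, higher row) and $i$ (now in the right column, lower row) that violate the triple rule on the appropriate triple. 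Conversely, if $i+1$ is strictly right of $i$ and \emph{not} in the column immediately right of $i$, or is in the immediately-right column but \emph{weakly below} $i$, one checks the rows, first column, and all triples are undisturbed by the exchange, so $s_iT\in\SRCT(\alpha)$.

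The main obstacle I anticipate is the bookkeeping in part (1): the $\SRCT$ triple rule is a one-directional implication about relative row order, and one must be careful about which of the two swapped entries sits in which box of a triple, and about the boundary behavior in the first column (read in the opposite direction from the others). In particular, verifying that case (c) really forces a triple-rule violation, and that the ``immediately right, weakly below'' subcase really does \emph{not}, requires drawing out the relevant triple configurations and tracking the inequalities; this is the step where an off-by-one or a reversed inequality is most likely to creep in. Everything else reduces to the routine observations that $\SRCT$ entries increase up every column (a consequence of the triple rule, proved as in the analogous $\SYCT$ statements) and that exchanging two entries that share no row, column, or triple is harmless.
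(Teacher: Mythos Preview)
Your plan is essentially the paper's: a direct case analysis on the relative position of $i$ and $i+1$, checking which defining conditions fail after the swap. The paper shortens the nonattacking directions by citing \cite[Lemma 3.7]{TvW:1} for part (1) and \cite[Lemma 3.2]{Bardwell.Searles} for part (2), but your self-contained arguments follow the same route.

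Two small corrections to your part (1) sketch, since you flagged this as the delicate spot. First, the parenthetical ``entries increase up every column'' is not a property of $\SRCT$; only the first column is required to increase. What you actually need (and what the paper uses) for case (b) is the weaker fact that one cannot have $i+1$ \emph{above} $i$ in a non-first column: the entry immediately left of the lower box exceeds it by row-decrease, hence is at least $i+2$, and then the triple rule applied to that entry and the upper box forces the lower entry to exceed the upper one. Second, in case (c) your row labels are reversed: after the swap $i+1$ occupies the left column in the \emph{lower} row and $i$ the right column in the \emph{higher} row; the triple-rule violation is then on these two boxes together with the box immediately to the right of $i+1$, which (if it exists) has entry less than $i$ by row-decrease in $T$. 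Finally, the ``subdiagram on $\{1,\dots,i+1\}$'' you mention in case (a) is not part of the $\SRCT$ definition; first-column increase alone does the job there.
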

\begin{proof}
The statement of (1) is identical to the definition of attacking in \cite[Definition 3.1]{TvW:1} (reflected in the $x$-axis). In \cite[Lemma 3.7]{TvW:1} it is shown that $s_iT \in \SRCT(\alpha)$ if $i$ is nonattacking. For the converse, suppose $i$ is attacking. If $i+1$ is above $i$ in the first column, then certainly $s_iT\notin \SRCT(\alpha)$. For any $T\in \SRCT(\alpha)$, one cannot have $i+1$ above $i$ in a column other than the first, as then the entry immediately left of $i$ would have to be greater than $i$ since entries decrease along rows, violating the triple rule. Hence if $i+1$ is below $i$ in a column of $T$ other than the first, then $s_iT\notin \SRCT(\alpha)$. Finally if $i+1$ is strictly above $i$ in the column immediately right of $i$ in $T$, then $s_iT\notin \SRCT(\alpha)$ since the triple rule is violated on $i$, $i+1$ and the entry immediately right of $i+1$ in $s_iT$. 

For (2), clearly such a descent is attacking, and any other descent is nonattacking by \cite[Lemma 3.2]{Bardwell.Searles}.
\end{proof}

\begin{theorem}\label{thm:previousmodulesSPCTSYRT}
Let $\alpha\vDash n$. For $\StdTab(D)$ equal to, respectively, $\SRCT(\alpha)$ or $\SYRT(\alpha)$, the $H_n(0)$-module defined in \cite[(3.1)]{TvW:1} for quasisymmetric Schur functions or, respectively, \cite[Theorem 3.3]{Bardwell.Searles} for Young row-strict quasisymmetric Schur functions, is precisely $\widehat{{\bf N}}_{\StdTab(D)}$.  
\end{theorem}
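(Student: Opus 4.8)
This proof follows the same template as Theorem~\ref{thm:previousmodulesSITSET}, so the plan is to reduce everything to Lemma~\ref{lem:attackingdescents} together with Lemma~\ref{lem:position} and the formula \eqref{eqn:hat0Hecke}. First I would observe that, for each of $\SRCT(\alpha)$ and $\SYRT(\alpha)$, the descent set is determined by a reading word (Definition~\ref{def:SRCTSYRTdescents}), so the notion of descent and of $s_iT$ lowering/raising descents behaves as required for the general construction of Section~\ref{subsec:variant}. Next, Lemma~\ref{lem:attackingdescents} shows that in both families the attacking/nonattacking status of a descent $i$ depends only on the relative position in $D$ of the box containing $i$ and the box containing $i+1$; hence by Lemma~\ref{lem:position} (which, as remarked, applies equally to descents) both $\SRCT(\alpha)$ and $\SYRT(\alpha)$ are descent-compatible. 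Therefore $\widehat{{\bf N}}_{\SRCT(\alpha)}$ and $\widehat{{\bf N}}_{\SYRT(\alpha)}$ are genuinely $H_n(0)$-modules, with the $\hat\pi_i$-action given by \eqref{eqn:hat0Hecke}.

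The remaining step is to match \eqref{eqn:hat0Hecke} against the module structures defined in \cite[(3.1)]{TvW:1} and \cite[Theorem 3.3]{Bardwell.Searles}. For the $\SYRT(\alpha)$ case this is immediate: \cite[Theorem 3.3]{Bardwell.Searles} defines $\hat\pi_i$ to fix $T$ when $i$ is not a descent, to kill $T$ when $i+1$ is immediately right of $i$, and to send $T\mapsto s_iT$ otherwise, and Lemma~\ref{lem:attackingdescents}(2) identifies exactly that middle case with the attacking descents. For the $\SRCT(\alpha)$ case, one inspects the operator in \cite[(3.1)]{TvW:1}: it sends $T$ to $T$ when $i\notin\Des(T)$, to $0$ when $i$ is an attacking descent (in the sense of \cite[Definition 3.1]{TvW:1}), and to $s_iT$ when $i$ is a nonattacking descent; since Lemma~\ref{lem:attackingdescents}(1) shows the \cite{TvW:1} notion of attacking descent coincides with ours (i.e.\ $s_iT\notin\SRCT(\alpha)$), this operator is exactly \eqref{eqn:hat0Hecke}. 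One should also note the reflection-in-the-$x$-axis convention difference flagged after Definition~\ref{def:SRCTSYRTdescents}, which is a harmless relabelling of rows and does not affect the module structure.

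I expect the only mild obstacle to be bookkeeping: carefully checking that the attacking-descent conditions in \cite[Definition 3.1]{TvW:1} (stated with their row-numbering convention) translate under the $x$-axis reflection to the three cases listed in Lemma~\ref{lem:attackingdescents}(1), and confirming that the sign/indexing conventions in \cite[(3.1)]{TvW:1} and \cite[Theorem 3.3]{Bardwell.Searles} use the $\hat\pi_i=\pi_i+1$ generators (so that \eqref{eqn:hat0Hecke}, not \eqref{eqn:0Hecke}, is the relevant formula). Once those conventions are aligned, the identification is a direct case-by-case comparison and the proof concludes, exactly parallel to the proof of Theorem~\ref{thm:previousmodulesSITSET}.

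\begin{proof}
By Definition~\ref{def:SRCTSYRTdescents}, the descent sets of $\SRCT(\alpha)$ and $\SYRT(\alpha)$ arise from reading words, so the general framework of Section~\ref{subsec:variant} applies. By Lemma~\ref{lem:attackingdescents}, in each of $\SRCT(\alpha)$ and $\SYRT(\alpha)$ the attacking/nonattacking status of a descent $i$ is determined solely by the relative position in $D(\alpha)$ of the boxes containing $i$ and $i+1$. Hence by Lemma~\ref{lem:position} (which holds for descents as well as ascents), both $\SRCT(\alpha)$ and $\SYRT(\alpha)$ are descent-compatible, and so for $\StdTab(D)$ equal to either family, $\widehat{{\bf N}}_{\StdTab(D)}$ is an $H_n(0)$-module with $\hat{\pi}_i$-action given by \eqref{eqn:hat0Hecke}.

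It remains to compare \eqref{eqn:hat0Hecke} with the module structures in the literature. For $\SYRT(\alpha)$, the operator of \cite[Theorem 3.3]{Bardwell.Searles} fixes $T$ when $i\notin\Des(T)$, sends $T$ to $0$ when $i\in\Des(T)$ and $i+1$ is immediately right of $i$ in $T$, and sends $T$ to $s_iT$ otherwise; by Lemma~\ref{lem:attackingdescents}(2) the middle case is precisely the set of attacking descents, so this agrees with \eqref{eqn:hat0Hecke}. For $\SRCT(\alpha)$, the operator of \cite[(3.1)]{TvW:1} fixes $T$ when $i\notin\Des(T)$, sends $T$ to $0$ when $i$ is an attacking descent in the sense of \cite[Definition 3.1]{TvW:1}, and sends $T$ to $s_iT$ when $i$ is a nonattacking descent; by Lemma~\ref{lem:attackingdescents}(1) the \cite{TvW:1} notion of attacking descent coincides with the condition $s_iT\notin\SRCT(\alpha)$, so this operator is exactly \eqref{eqn:hat0Hecke} (up to the reflection-in-the-$x$-axis relabelling of rows noted after Definition~\ref{def:SRCTSYRTdescents}, which does not affect the module). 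In both cases the module structure coincides with that of $\widehat{{\bf N}}_{\StdTab(D)}$, completing the proof.
\end{proof}
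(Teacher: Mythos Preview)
Your proof is correct and follows essentially the same approach as the paper: the paper's proof simply says the result follows from comparing the module definitions in \cite[(3.1)]{TvW:1} and \cite[Theorem 3.3]{Bardwell.Searles} to the classifications of attacking and nonattacking descents in Lemma~\ref{lem:attackingdescents}, together with an argument parallel to Theorem~\ref{thm:previousmodulesSITSET}. You have spelled this out in more detail, but the substance is identical. (One tiny bookkeeping point: the reflection-in-the-$x$-axis remark is made after Definition~\ref{def:SRCTSYRT}, not after Definition~\ref{def:SRCTSYRTdescents}.)
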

\begin{proof}
This follows from comparing the definitions of the modules in \cite[(3.1)]{TvW:1} and \cite[Theorem 3.3]{Bardwell.Searles} to the classifications of attacking and nonattacking descents in Lemma~\ref{lem:attackingdescents}, and a similar argument to Theorem~\ref{thm:previousmodulesSITSET}. 
\end{proof}

\subsubsection{Permuted variants}
The tableau families $\SRCT(\alpha)$ and $\SYRT(\alpha)$ are generalised to, respectively, $\SRCT^\sigma(\alpha)$ in \cite[Section 2.4]{TvW:2} and $\SYRT^\sigma(\alpha)$ in \cite[Definition 4.10]{JKLO}, where $\sigma$ is a fixed permutation of $[\ell(\alpha)]$. The definitions of $\SRCT^\sigma(\alpha)$ and $\SYRT^\sigma(\alpha)$ are the same as those for $\SRCT(\alpha)$ and $\SYRT(\alpha)$ respectively (Definition~\ref{def:SRCTSYRT}), except that entries must now follow the same relative order as $\sigma$ from bottom to top in the first column of $T$, instead of increasing from bottom to top. Then $H_n(0)$-modules on $\SRCT^\sigma(\alpha)$ and $\SYRT^\sigma(\alpha)$ are defined in \cite[(3.1)]{TvW:2} and \cite[(4.3)]{JKLO} respectively. The classifications of descents and attacking descents are the same for $\SRCT^\sigma(\alpha)$ and $\SYRT^\sigma(\alpha)$ as for $\SRCT(\alpha)$ and $\SYCT(\alpha)$ respectively. Then, changing the reading word for $\SRCT(\alpha)$ so that boxes in the first column are read in the reverse order to $\sigma$, and the reading word for $\SYRT(\alpha)$ so that boxes in the first column are read in the order of $\sigma$, and following the arguments in Subsection~\ref{subsubsec:quasisymmetricSchur}, the modules on $\SRCT^\sigma(\alpha)$ in \cite{TvW:2} and $\SYRT^\sigma(\alpha)$ in \cite{JKLO} are precisely the diagram modules $\widehat{{\bf N}}_{\SRCT^\sigma(\alpha)}$ and $\widehat{{\bf N}}_{\SYRT^\sigma(\alpha)}$ respectively. Furthermore, an $H_n(0)$-module is defined on $\SYCT^\sigma(\alpha)$ in \cite[(4.2)]{CKNO:2}; this can similarly be seen to be equal to $\widehat{{\bf N}}_{\SYCT^\sigma(\alpha)}$, once the reading word of $T\in \SYCT^\sigma(\alpha)$ is is defined to be that obtained by reading entries of $T$ in reverse order to $\sigma$ in the first column, and then from bottom to top in subsequent columns, reading the columns in order from left to right. In short, we have the following.

\begin{theorem}\label{thm:previousmodulespermuted}
Let $\alpha\vDash n$. For $\StdTab(D)$ equal to, respectively, $\SRCT^\sigma(\alpha)$, $\SYRT^\sigma(\alpha)$, $\SYCT^\sigma(\alpha)$ the $H_n(0)$-module defined in, respectively, \cite[(3.1)]{TvW:2}, \cite[(4.3)]{JKLO}, \cite[(4.2)]{CKNO:2}, is precisely $\widehat{{\bf N}}_{\StdTab(D)}$.
\end{theorem}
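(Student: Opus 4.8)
The plan is to handle the three tableau families $\SRCT^\sigma(\alpha)$, $\SYRT^\sigma(\alpha)$, $\SYCT^\sigma(\alpha)$ in parallel, in each case following the template already established by Theorems~\ref{thm:previousmodulesSITSET} and~\ref{thm:previousmodulesSPCTSYRT}. First I would fix a permutation $\sigma$ of $[\ell(\alpha)]$ and recall the definitions of $\SRCT^\sigma(\alpha)$, $\SYRT^\sigma(\alpha)$ and $\SYCT^\sigma(\alpha)$ from \cite{TvW:2, JKLO, CKNO:2}: these are obtained from $\SRCT(\alpha)$, $\SYRT(\alpha)=\SYCT(\alpha)$ by the single modification that the entries in the first column are required to appear in the same relative order as $\sigma$ (read bottom to top) rather than strictly increasing. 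Then I would specify the appropriate reading word in each case, as indicated in the paragraph preceding the theorem statement: for $\SRCT^\sigma(\alpha)$, read columns right to left from bottom to top except the first column is read in the reverse order to $\sigma$; for $\SYRT^\sigma(\alpha)$, read columns right to left from top to bottom except the first column is read in the order of $\sigma$; and for $\SYCT^\sigma(\alpha)$, read the first column in the reverse order to $\sigma$ and then subsequent columns bottom to top, left to right. The first task is to check that the descent set defined combinatorially in each of \cite{TvW:2, JKLO, CKNO:2} (namely $i\in\Des(T)$ according to the same relative-position rule as in the $\sigma=\mathrm{id}$ case) agrees with the descent set $\Des(T)$ arising from this reading word via~(\ref{eqn:rwdescent}); this is a routine verification essentially identical to Lemmas~\ref{lem:SYCTdescent} and~\ref{lem:SYCT2peakcomp}, since the only positions affected by $\sigma$ are within the first column, and the chosen reading order on the first column exactly matches the descent convention there.

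Next I would verify that the classification of attacking versus nonattacking descents is unchanged by $\sigma$. The key observation is that swapping entries $i$ and $i+1$ (for $i$ a descent) can only move within or between rows/columns in a way governed by the row, column and triple conditions, none of which is altered by the first-column order requirement unless both $i$ and $i+1$ lie in the first column; and if $i$ is a descent with $i,i+1$ both in the first column, the relative-order-$\sigma$ condition is satisfied by $T$ iff it is violated by $s_iT$ (swapping two cyclically adjacent values in a fixed-pattern sequence), so such a descent is always attacking, exactly as in the $\sigma=\mathrm{id}$ case. Hence the combinatorial characterisations of attacking descents from Lemma~\ref{lem:attackingdescents} and from \cite[Lemma 4.4]{NSvWVW:0Hecke}/\cite[Lemma 3.2]{Bardwell.Searles} (as appropriate) carry over verbatim to the permuted families. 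In particular, attacking status is still determined purely by the relative position in $D(\alpha)$ of the boxes containing $i$ and $i+1$, so by the descent analogue of Lemma~\ref{lem:position} each of $\SRCT^\sigma(\alpha)$, $\SYRT^\sigma(\alpha)$, $\SYCT^\sigma(\alpha)$ is descent-compatible, and therefore $\widehat{{\bf N}}_{\StdTab(D)}$ is a well-defined $H_n(0)$-module for $D=D(\alpha)$ and $\StdTab(D)$ each of these three families.

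Finally I would compare module structures. The $H_n(0)$-actions defined in \cite[(3.1)]{TvW:2}, \cite[(4.3)]{JKLO} and \cite[(4.2)]{CKNO:2} all send $T$ to $T$ when $i$ is an ascent, to $0$ when $i$ is an attacking descent, and to $s_iT$ when $i$ is a nonattacking descent; this is precisely the defining formula~(\ref{eqn:hat0Hecke}) for $\widehat{{\bf N}}_{\StdTab(D)}$. Since we have already matched up the descent sets and the attacking/nonattacking dichotomy, the two module structures coincide on the common underlying basis, giving the claimed equalities. The main obstacle, such as it is, is purely bookkeeping: one must be careful that the reading-word conventions chosen for the permuted families genuinely reproduce the intended descent sets — in particular that reversing the first column relative to $\sigma$ (for $\SRCT^\sigma$ and $\SYCT^\sigma$) versus following $\sigma$ (for $\SYRT^\sigma$) is the correct choice in each case — and that no triple or row/column condition interacts with $\sigma$ in a way that changes attacking status when $i$ and $i+1$ are in different columns. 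Both points are straightforward once the definitions are unwound, so no genuinely new argument beyond those in Theorems~\ref{thm:previousmodulesSITSET} and~\ref{thm:previousmodulesSPCTSYRT} is required.
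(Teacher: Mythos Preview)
Your proposal is correct and follows essentially the same approach as the paper: modify the reading word on the first column to respect $\sigma$ (reverse of $\sigma$ for $\SRCT^\sigma$ and $\SYCT^\sigma$, order of $\sigma$ for $\SYRT^\sigma$), observe that the descent and attacking-descent classifications are unchanged from the $\sigma=\mathrm{id}$ case, invoke the descent analogue of Lemma~\ref{lem:position} for descent-compatibility, and then compare the resulting action (\ref{eqn:hat0Hecke}) with the cited module definitions. The paper treats this theorem as a direct consequence of the discussion preceding it, and your write-up unpacks that discussion faithfully.
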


\subsubsection{Projective indecomposable $0$-Hecke modules}

Huang \cite{Huang} interpreted the projective indecomposable $H_n(0)$-modules in terms of \emph{ribbon tableaux}. 
Given a composition $\alpha\vDash n$, the \emph{ribbon diagram} of $\alpha$ is the diagram whose $i$th row from the bottom consists of $\alpha_i$ boxes, such that the leftmost box in each row is immediately above the rightmost box in the row below. The \emph{standard ribbon tableaux} of shape $\alpha$, denoted $\Rib(\alpha)$, are the standard fillings of the ribbon diagram of $\alpha$ such that entries increase from left to right along rows and from top to bottom in columns.

\begin{example}
The standard ribbon tableaux of shape $\alpha=(2,2)$ are
\[\begin{array}{c@{\hskip2\cellsize}c@{\hskip2\cellsize}c@{\hskip2\cellsize}c@{\hskip2\cellsize}c@{\hskip2\cellsize}c}
\tableau{ & 1 & 2  \\ 3 & 4 } & \tableau{ & 1 & 3  \\ 2 & 4 } &\tableau{ & 1 & 4  \\ 2 & 3 } &\tableau{ & 2 & 3  \\ 1 & 4 } &\tableau{ & 2 & 4  \\ 1 & 3 } 
\end{array}\]
\end{example}

Given a composition $\alpha$, the projective indecomposable $H_n(0)$-module ${\bf P}_\alpha$ is defined \cite[(3.3)]{Huang} to be the $\mathbb{C}$-span of $\Rib(\alpha)$, with $0$-Hecke action given by 
\begin{equation}\label{eqn:projective}
\pi_iT = \begin{cases} -T & \mbox{ if } i \mbox{ is strictly above } i+1  \\
                                        0 & \mbox{ if } i \mbox{ is in the same row as } i+1 \\
                                        s_iT & \mbox{ if } i \mbox{ is strictly below } i+1
                                        \end{cases}
\end{equation}
for each $T\in \Rib(\alpha)$.

\begin{theorem}
Let $\alpha \vDash n$ and define a reading order on $\Rib(\alpha)$ by reading entries from left to right along rows, starting at the bottom row and proceeding upwards. Then the module ${\bf P}_\alpha$ is precisely the diagram module ${\bf N}_{\Rib(\alpha)}$.
\end{theorem}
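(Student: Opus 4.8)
The plan is to verify the two requirements of the diagram module construction for the pair $(D,\StdTab(D))$, where $D$ is the ribbon diagram of $\alpha$ and $\StdTab(D)=\Rib(\alpha)$ (plainly a nonempty subset of $\StdFill(D)$), and then to compare the resulting action (\ref{eqn:0Hecke}) with (\ref{eqn:projective}). First I would determine the descent set from (\ref{eqn:rwdescent}). Since $\rw(T)$ reads each row from left to right and the rows from bottom to top, the entry $i$ is read before $i+1$ whenever the box of $i$ lies in a lower row than the box of $i+1$, and also when they lie in the same row, because two consecutive entries in a row must be adjacent with $i$ immediately left of $i+1$. Hence $i$ is read after $i+1$ exactly when $i$ lies strictly above $i+1$, so $\Des(T)=\{i : i \text{ is strictly above } i+1 \text{ in } T\}$, matching the first case of (\ref{eqn:projective}).

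Next I would classify the ascents. If $i\notin\Des(T)$ then $i$ is not strictly above $i+1$, so $i$ is either in the same row as $i+1$ (hence immediately left of it, as above) or strictly below $i+1$. In the first case $s_iT$ violates the row-increasing condition, so $i$ is attacking. In the second case $i$ and $i+1$ occupy different rows, and also different columns: if they shared a column, the column-increasing condition would force the entry in the lower row to be larger, contradicting $i<i+1$ with $i$ below. Consequently no row and no column of $T$ contains both $i$ and $i+1$, and a short check shows that transposing them preserves the row- and column-increasing conditions — every neighbour of the box of $i$ has entry $<i$, or $>i$ but different from $i+1$, hence remains correctly ordered once that box holds $i+1$, and symmetrically for the box of $i+1$ — so $s_iT\in\Rib(\alpha)$ and $i$ is nonattacking. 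Thus the attacking/nonattacking status of an ascent depends only on the relative position of the boxes of $i$ and $i+1$, so $\Rib(\alpha)$ is ascent-compatible by Lemma~\ref{lem:position}, and ${\bf N}_{\Rib(\alpha)}$ is a genuine $H_n(0)$-module by Theorem~\ref{thm:0Hecke}.

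Substituting this trichotomy into (\ref{eqn:0Hecke}) yields $\pi_iT=-T$ when $i$ is strictly above $i+1$, $\pi_iT=0$ when $i$ is in the same row as $i+1$, and $\pi_iT=s_iT$ when $i$ is strictly below $i+1$ — exactly the formula (\ref{eqn:projective}) defining ${\bf P}_\alpha$. Since both modules carry this action on the common underlying space, the $\mathbb{C}$-span of $\Rib(\alpha)$, they coincide. I expect the only step needing genuine care to be the \emph{nonattacking} case of the ascent classification, namely showing that $i$ and $i+1$ then lie in distinct rows and distinct columns and that the transposition respects every ribbon-tableau condition; this rests on the structural fact that consecutive rows of a ribbon overlap in a single column, and the rest is bookkeeping.
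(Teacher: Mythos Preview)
Your proof is correct and follows essentially the same approach as the paper: identify the descents via the reading word, classify ascents as attacking precisely when $i$ and $i+1$ lie in the same row (using that they necessarily lie in different columns), invoke Lemma~\ref{lem:position} for ascent-compatibility, and compare (\ref{eqn:0Hecke}) with (\ref{eqn:projective}). The paper's version is terser, simply asserting that an ascent has $i$ and $i+1$ in different columns and hence is attacking if and only if they share a row, whereas you spell out the nonattacking verification in more detail; the substance is the same.
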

\begin{proof}
Let $T\in \Rib(\alpha)$. The reading order implies $i\in \Des(T)$ if and only if $i$ is strictly above $i+1$ in $T$. If $i$ is an ascent in $T$, then $i$ and $i+1$ are in different columns and thus $i$ is attacking if and only if $i$ and $i+1$ are in the same row. So by  Lemma~\ref{lem:position}, ${\bf N}_{\Rib(\alpha)}$ is an $H_n(0)$-module. Using this classification of descents and (attacking) ascents in $T$, and comparing (\ref{eqn:projective}) and (\ref{eqn:0Hecke}), we find that ${\bf N}_{\Rib(\alpha)}$ is precisely ${\bf P}_\alpha$.
\end{proof}

\subsection{Intervals in left weak Bruhat order}

The modules in \cite{Bardwell.Searles, BBSSZ, Searles:0Hecke, TvW:1, TvW:2} have also been interpreted by Jung, Kim, Lee and Oh \cite{JKLO} as sums of \emph{weak Bruhat interval modules}. These $H_n(0)$-modules, which can also be realised in terms of $H_n(0)$-modules on Yang-Baxter intervals \cite{Hivert.Novelli.Thibon}, are defined on intervals in the left weak Bruhat order on the symmetric group $\mathfrak{S}_n$. 

For $\gamma\in \mathfrak{S}_n$, define $i$ to be a descent of $\gamma$ if $i$ is to the right of $i+1$ in the one-line notation for $\gamma$, and let $\Des(\gamma)$ denote the set of descents of $\gamma$. Note this is the same as the definition (\ref{eqn:rwdescent}) of descent for (the reading word of) a standard tableau. If $s_i$ is a simple transposition, then $s_i\gamma$ is the permutation obtained by exchanging $i$ and $i+1$ in the one-line notation for $\gamma$. The \emph{left weak Bruhat order} $\preceq_L$ on $\mathfrak{S}_n$ can be defined by its covering relations: $\delta$ covers $\gamma$ in $\preceq_L$ if $s_i\gamma=\delta$ for some $i\notin \Des(\gamma)$.

Let $\sigma \preceq_L \rho$ in $\mathfrak{S}_n$, and let $[\sigma, \rho]_L$ denote the corresponding interval in $\preceq_L$, i.e., the set of permutations $\gamma \in \mathfrak{S}_n$ such that $\sigma \preceq_L \gamma \preceq_L \rho$. In \cite[Definition 3.1]{JKLO}, Jung, Kim, Lee and Oh define $H_n(0)$-modules $B(\sigma, \rho)$ and $\overline{B}(\sigma, \rho)$ on the $\mathbb{C}$-span of $[\sigma, \rho]_L$. Letting $\gamma\in [\sigma, \rho]_L$, the structure of $B(\sigma, \rho)$ is given by
\begin{equation}\label{eqn:B}
\hat{\pi}_i\gamma = \begin{cases} \gamma & \mbox{ if } i\in \Des(\gamma)  \\
                                        0 & \mbox{ if } i \notin \Des(\gamma), s_i\gamma \notin [\sigma, \rho]_L  \\
                                        s_i\gamma & \mbox{ if } i \notin \Des(\gamma), s_i\gamma \in [\sigma, \rho]_L 
                                        \end{cases}
\end{equation}
and the structure of $\overline{B}(\sigma, \rho)$ is given by
\begin{equation}\label{eqn:barB}
\pi_i\gamma = \begin{cases} -\gamma & \mbox{ if } i\in \Des(\gamma)  \\
                                        0 & \mbox{ if } i \notin \Des(\gamma), s_i\gamma \notin [\sigma, \rho]_L  \\
                                        s_i\gamma & \mbox{ if } i \notin \Des(\gamma), s_i\gamma \in [\sigma, \rho]_L. 
                                        \end{cases}
\end{equation}

Given an interval $[\sigma, \rho]_L$ in left weak Bruhat order on $\mathfrak{S}_n$, one can define a diagram $D$ and $\StdTab(D)$ such that the reading words of $\StdTab(D)$ are precisely $[\sigma, \rho]_L$. For example, let $D$ be a single row of $n$ boxes with reading order from left to right, and let $\StdTab(D)$ comprise the fillings obtained by writing the one-line notation for each $\gamma \in [\sigma, \rho]_L$ in the boxes of $D$ from left to right. 

\begin{lemma}\label{lem:Bruhatascent}
Let $\sigma \preceq_L \rho$ in $\mathfrak{S}_n$. For any choice of $D$ and $\StdTab(D)$ such that $\{\rw(T) : T\in \StdTab(D)\}$ is equal to $[\sigma, \rho]_L$,  $\StdTab(D)$ is ascent-compatible. 
\end{lemma}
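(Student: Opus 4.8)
The plan is to show that for any diagram $D$ and family $\StdTab(D)$ whose reading words realize an interval $[\sigma,\rho]_L$, the attacking/nonattacking status of an ascent depends only on its position $(r,s)$, so that ascent-compatibility follows immediately from Definition~\ref{def:ascentcompatible} (equivalently, from Lemma~\ref{lem:position} applied in terms of positions rather than boxes). The key observation is that if $T\in\StdTab(D)$ has an ascent in positions $(r,s)$, with entries $i=\rw(T)_r$ and $i+1=\rw(T)_s$, then $s_iT$ has reading word $s_i\rw(T)$, and $s_iT\in\StdTab(D)$ if and only if $s_i\rw(T)\in[\sigma,\rho]_L$. So I need to argue that whether $s_i\gamma\in[\sigma,\rho]_L$, for $\gamma$ having an ascent in positions $(r,s)$, depends only on $r,s$ and not on the particular $\gamma$.

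The main work is this reduction, and I expect it to be the only real obstacle. First I would record that $\gamma$ having an ascent in positions $(r,s)$ means $\gamma(s)=\gamma(r)+1$ and $r<s$; write $i=\gamma(r)$, so $s_i\gamma$ is the permutation covering $\gamma$ in $\preceq_L$ obtained by swapping the values $i$ and $i+1$. Since $i\notin\Des(\gamma)$, we have $\gamma\lessdot_L s_i\gamma$, hence $\sigma\preceq_L\gamma$ already gives $\sigma\preceq_L s_i\gamma$ automatically. Therefore $s_i\gamma\in[\sigma,\rho]_L$ if and only if $s_i\gamma\preceq_L\rho$. So the question reduces to: does $s_i\gamma\preceq_L\rho$ depend only on the positions $(r,s)$? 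Here I would use the standard description of left weak order via inversion sets: $\gamma\preceq_L\rho$ if and only if $\mathrm{Inv}(\gamma)\subseteq\mathrm{Inv}(\rho)$, where $\mathrm{Inv}$ denotes the set of inversions (pairs of \emph{values}). Passing from $\gamma$ to $s_i\gamma$ adds exactly the single inversion $\{i,i+1\}$ to the value-inversion set. Thus $s_i\gamma\preceq_L\rho$ if and only if $\gamma\preceq_L\rho$ (which holds) \emph{and} $\{i,i+1\}\in\mathrm{Inv}(\rho)$, i.e., $i+1$ appears before $i$ in the one-line notation of $\rho$. But $i=\gamma(r)$ and $i+1=\gamma(s)$, and the condition ``$\gamma(s)$ precedes $\gamma(r)$ in $\rho$'' can be rephrased purely in terms of $\rho$ and the positions: it says $\rho^{-1}(\gamma(s))<\rho^{-1}(\gamma(r))$. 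This still seems to depend on $\gamma$, so the cleaner route is: the ascent in positions $(r,s)$ is nonattacking in $T$ iff $\{ \rw(T)_r,\rw(T)_s\}\notin\mathrm{Inv}(\rho)$, i.e. iff $\rw(T)_r$ precedes $\rw(T)_s$ in $\rho$. Now I claim that whether $\rw(T)_r$ precedes $\rw(T)_s$ in $\rho$ is forced by $r$ and $s$ alone: indeed, for \emph{any} $\gamma\in[\sigma,\rho]_L$ with $\gamma(s)=\gamma(r)+1$, since $\gamma\preceq_L\rho$ and $\{\gamma(r),\gamma(s)\}$ is not an inversion of $\gamma$, its being an inversion of $\rho$ or not is determined — but this is exactly what needs proving.

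So the crux is: if $T,T'\in\StdTab(D)$ both have an ascent in positions $(r,s)$, then $\{\rw(T)_r,\rw(T)_s\}\in\mathrm{Inv}(\rho)\iff\{\rw(T')_r,\rw(T')_s\}\in\mathrm{Inv}(\rho)$. I would prove this by showing both sides are equivalent to a single position-only condition on $\rho$: namely, whether the interval $[\sigma,\rho]_L$ contains a permutation with a \emph{descent} in positions $(r,s)$. Concretely, $s_i\gamma$ has a descent in positions $(r,s)$ (its values there are $i+1,i$), and $s_i\gamma\in[\sigma,\rho]_L$ iff $\{i,i+1\}\in\mathrm{Inv}(\rho)$. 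Since $\rho$ has a \emph{unique} chain structure, and since $[\sigma,\rho]_L$ being closed under the covering moves means: position $(r,s)$ ``can be swapped'' from some $T$ iff it can be swapped in all tableaux presenting that position — here I invoke that $[\sigma,\rho]_L$ as a subset of $\mathfrak S_n$ is precisely the set of $\gamma$ with $\mathrm{Inv}(\sigma)\subseteq\mathrm{Inv}(\gamma)\subseteq\mathrm{Inv}(\rho)$, an order ideal-coideal in the inversion lattice, so the condition ``$\{\gamma(r),\gamma(s)\}\in\mathrm{Inv}(\rho)$'' is, given that $\gamma(s)=\gamma(r)+1$ and $\mathrm{Inv}(\gamma)\subseteq\mathrm{Inv}(\rho)$, automatically \emph{constant}: if it held for one such $\gamma$ and failed for another $\gamma'$, then $\gamma'$ would have an ascent in $(r,s)$ while $s_i\gamma'\notin[\sigma,\rho]_L$, meaning $\{\gamma'(r),\gamma'(s)\}\notin\mathrm{Inv}(\rho)$; combined with $\gamma'\preceq_L\rho$ and $\gamma\preceq_L\rho$ this is contradictory once one notes that $\gamma$ and $\gamma'$, both above $\sigma$ in left weak order with an ascent at the same positions, force the same relative $\rho$-order of those two entries — this last deduction, which I would spell out using that left weak order fixes the set of positions where each pair of values lies relative to one another along any saturated chain up to $\rho$, is the technical heart. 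I would then conclude: the attacking/nonattacking status of the ascent in positions $(r,s)$ is the same for all $T\in\StdTab(D)$, so $\StdTab(D)$ is ascent-compatible by Definition~\ref{def:ascentcompatible}.

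Regarding where the difficulty lies: the routine parts are the translation between $s_iT$ and $s_i\rw(T)$, and the inversion-set description of $\preceq_L$; the genuine obstacle is verifying that the single inversion $\{i,i+1\}$ being present in $\mathrm{Inv}(\rho)$ is \emph{independent} of which $\gamma\in[\sigma,\rho]_L$ realizes the ascent at $(r,s)$. I anticipate the clean way to finish is to observe that for $\gamma\in[\sigma,\rho]_L$ with an ascent in positions $(r,s)$ and entries $i,i+1$ there, the pair $(i,i+1)$ is determined as values but the relevant fact ``$i+1$ before $i$ in $\rho$'' is equivalent to ``$s_i\gamma\not\preceq_L\rho$'', which by the lattice property of $\preceq_L$ (intervals are convex, and covering up by $s_i$ from any element of the interval lands in the interval precisely when the top $\rho$ already contains that inversion) does not depend on $\gamma$; a short argument by contradiction comparing two such $\gamma,\gamma'$ and using convexity of $[\sigma,\rho]_L$ completes it.
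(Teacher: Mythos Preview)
Your overall strategy---reduce to showing that whether $s_i\gamma\preceq_L\rho$ depends only on the positions $(r,s)$ of the ascent---is exactly the right one, and is the paper's strategy too. But the execution has a genuine error: you use the \emph{wrong} inversion set for left weak order. You assert that $\gamma\preceq_L\rho$ iff $\mathrm{Inv}(\gamma)\subseteq\mathrm{Inv}(\rho)$ with $\mathrm{Inv}$ denoting \emph{value} pairs, and that $s_i\gamma$ adds exactly $\{i,i+1\}$ to this set. Both claims are false. For instance, with $\gamma=132$ and $s_1\gamma=231$, the value-inversion sets are $\{(2,3)\}$ and $\{(1,2),(1,3)\}$: not nested, and not differing by a single pair. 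Consequently your derived criterion ``$s_i\gamma\preceq_L\rho$ iff $i+1$ precedes $i$ in $\rho$'' is wrong: take $\gamma=123$, $\rho=231$, $i=1$; your criterion predicts $213\preceq_L 231$, but in fact $213\not\preceq_L 231$. You visibly sense the problem (``this still seems to depend on $\gamma$'') and try several patches, but none of them lands, and the closing appeal to convexity is a hand-wave.

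The fix is to use \emph{position} inversions: set $\mathrm{Inv}(\gamma)=\{(r,s):r<s,\ \gamma(r)>\gamma(s)\}$. Then (and this is precisely the paper's key claim, proved there from scratch) $\gamma\preceq_L\rho$ iff $\mathrm{Inv}(\gamma)\subseteq\mathrm{Inv}(\rho)$. With this in hand, the ascent at positions $(r,s)$ gives $\mathrm{Inv}(s_i\gamma)=\mathrm{Inv}(\gamma)\cup\{(r,s)\}$, so $s_i\gamma\preceq_L\rho$ iff $\rho(r)>\rho(s)$---a condition depending only on $(r,s)$, and ascent-compatibility follows immediately. That is exactly the paper's argument; once you swap value inversions for position inversions, your outline becomes correct and essentially identical to it.
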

\begin{proof}
Let $\gamma \in \mathfrak{S}_n$. For positions $r<s$, we say $(r,s)$ is an \emph{ascent pair} in $\gamma$ if $\gamma(r) < \gamma(s)$, and a \emph{descent pair} in $\gamma$ if $\gamma(r)>\gamma(s)$. 
We will prove that $\gamma\preceq_L \rho$ if and only if every ascent pair in $\rho$ is also an ascent pair in $\gamma$. From this it follows that for any $\gamma\preceq_L \rho$, if $\gamma(r) = i$ and $\gamma(s)=i+1$ then $s_i\gamma \preceq_L \rho$ if and only if $\rho(r)>\rho(s)$, and therefore whether an ascent in a permutation in $[\sigma, \rho]_L$ is attacking or not depends only on its position $(r,s)$, ensuring ascent-compatibility.

Certainly if $(r,s)$ is an ascent pair in $\rho$ and a descent pair in $\gamma$, then we cannot have $\gamma \preceq_L \rho$, since if $\delta\preceq_L s_i\delta$ for some $\delta\in \mathfrak{S}_n$ and some $i$, then applying $s_i$ to $\delta$ cannot change any descent pair to an ascent pair. 
 
For the converse, suppose every ascent pair in $\rho$ is also an ascent pair in $\gamma$. We will prove $\gamma\preceq_L \rho$ by induction on $\ell(\rho) - \ell(\gamma)$, where $\ell(\gamma)$ is the number of descent pairs in $\gamma$. 
If $\ell(\rho) - \ell(\gamma) = 0$ then the ascent pairs in $\rho$ are exactly the ascent pairs in $\gamma$. A list of ascent pairs determines a permutation, so we have $\gamma=\rho$ and therefore $\gamma\preceq_L \rho$. 
Suppose inductively that $\gamma\preceq_L \rho$ whenever every ascent pair in $\rho$ is also an ascent pair in $\gamma$ and $\ell(\rho) - \ell(\gamma)<k$ for some $k\ge 1$. Now let $\gamma$ be such that every ascent pair in $\rho$ is also an ascent pair in $\gamma$ and $\ell(\rho) - \ell(\gamma) = k$. It suffices to show there is some $(r,s)$ and some $i$ such that $\gamma(r)=i$ and $\gamma(s) = i+1$ and $(r,s)$ is a descent pair in $\rho$, since then $\gamma\preceq_L s_i\gamma \preceq_L \rho$ by induction. Suppose for a contradiction that for every $(r,s)$ such that $\gamma(s)-\gamma(r) = 1$, we have $\rho(r) < \rho(s)$. Since every descent pair in $\gamma$ is necessarily also a descent pair in $\rho$, we also have that if $\gamma(t) = j+1$ and $\gamma(u) = j$ for $t<u$ then $\rho(t) > \rho(u)$. This implies that $\rho(\gamma^{-1}(1)) < \rho(\gamma^{-1}(2)) < \cdots < \rho(\gamma^{-1}(n))$, and therefore $\rho = \gamma$, contradicting $\ell(\rho) - \ell(\gamma) = k\ge 1$. 
\end{proof}

\begin{theorem}
Let $[\sigma, \rho]_L$ be an interval in left weak Bruhat order on $\mathfrak{S}_n$. Let $D$ and $\StdTab(D)$ be chosen such that $\{\rw(T) : T\in \StdTab(D)\}$ is equal to $[\sigma, \rho]_L$. Then $\overline{B}(\sigma, \rho)$ is isomorphic to ${\bf N}_{\StdTab(D)}$.
\end{theorem}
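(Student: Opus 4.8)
The plan is to exhibit the evident candidate map and check it is an isomorphism of $H_n(0)$-modules. Define $\Phi \colon \overline{B}(\sigma, \rho) \to {\bf N}_{\StdTab(D)}$ by sending each $\gamma \in [\sigma, \rho]_L$ to the unique standard filling $T_\gamma \in \StdTab(D)$ with $\rw(T_\gamma) = \gamma$, and extend linearly. First I would verify $\Phi$ is well-defined and bijective: since the reading order is a fixed total order on the boxes of $D$, one recovers a standard filling from the positions its entries occupy in the reading word, so the reading word map $\rw \colon \StdFill(D) \to \mathfrak{S}_n$ is injective; its restriction to $\StdTab(D)$ is therefore a bijection onto $\{\rw(T) : T \in \StdTab(D)\} = [\sigma, \rho]_L$ by hypothesis, and $\Phi$ is the linear extension of its inverse. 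Note also that ${\bf N}_{\StdTab(D)}$ is genuinely an $H_n(0)$-module, since $\StdTab(D)$ is ascent-compatible by Lemma~\ref{lem:Bruhatascent} and Theorem~\ref{thm:0Hecke} then applies.

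Next I would establish the two bookkeeping identities that make $\Phi$ equivariant. First, $\Des(T_\gamma) = \Des(\gamma)$: this is immediate, because the descent set of a standard tableau is defined in (\ref{eqn:rwdescent}) precisely via its reading word, which is exactly the definition of $\Des(\gamma)$ for the permutation $\gamma$. Second, $\rw(s_iT_\gamma) = s_i\gamma$: exchanging the entries $i$ and $i+1$ in $T_\gamma$ has the effect of swapping the values $i$ and $i+1$ in the one-line word $\rw(T_\gamma) = \gamma$, which is by definition $s_i\gamma$. Combining this with the injectivity of $\rw$ on $\StdFill(D)$ and the hypothesis $\{\rw(T) : T \in \StdTab(D)\} = [\sigma, \rho]_L$, I get $s_iT_\gamma \in \StdTab(D)$ if and only if $s_i\gamma \in [\sigma, \rho]_L$; equivalently, an ascent $i$ of $T_\gamma$ is nonattacking exactly when $s_i\gamma \in [\sigma, \rho]_L$, and attacking exactly when $s_i\gamma \notin [\sigma,\rho]_L$.

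With these identities in hand, I would finish by comparing (\ref{eqn:0Hecke}) with (\ref{eqn:barB}) case by case. If $i \in \Des(\gamma) = \Des(T_\gamma)$, then $\pi_i\gamma = -\gamma$ maps under $\Phi$ to $-T_\gamma = \pi_i T_\gamma$. If $i \notin \Des(\gamma)$ and $s_i\gamma \notin [\sigma, \rho]_L$, then $i$ is an attacking ascent of $T_\gamma$, so $\pi_i T_\gamma = 0$, matching $\pi_i\gamma = 0$. If $i \notin \Des(\gamma)$ and $s_i\gamma \in [\sigma, \rho]_L$, then $i$ is a nonattacking ascent of $T_\gamma$ and $\pi_i T_\gamma = s_i T_\gamma = T_{s_i\gamma} = \Phi(\pi_i\gamma)$. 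Hence $\Phi$ commutes with every $\pi_i$ and is an isomorphism of $H_n(0)$-modules.

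I expect no substantive obstacle: the argument is essentially a dictionary translation between permutations in $[\sigma,\rho]_L$ and fillings in $\StdTab(D)$. The only points needing a moment's care are the identity $\rw(s_iT) = s_i\,\rw(T)$ and the injectivity of $\rw$ on standard fillings, both of which follow at once from the fact that the reading order is a fixed total order on the boxes of $D$; all remaining work is the routine case check just described.
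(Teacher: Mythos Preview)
Your proposal is correct and follows exactly the approach of the paper: invoke Lemma~\ref{lem:Bruhatascent} (with Theorem~\ref{thm:0Hecke}) to ensure ${\bf N}_{\StdTab(D)}$ is an $H_n(0)$-module, then identify tableaux with their reading words and compare (\ref{eqn:barB}) with (\ref{eqn:0Hecke}). The paper states this in two sentences while you have carefully spelled out the bijection, the descent-set equality, and the attacking/nonattacking correspondence, but the substance is identical.
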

\begin{proof}
Lemma~\ref{lem:Bruhatascent} ensures ${\bf N}_{\StdTab(D)}$ is an $H_n(0)$-module. The statement then follows immediately from identifying an element of $\StdTab(D)$ with its reading word and comparing (\ref{eqn:barB}) and (\ref{eqn:0Hecke}).
\end{proof}

There are, however, ascent-compatible sets $\StdTab(D)$ whose reading words do not form an interval in left weak Bruhat order, and whose corresponding $H_n(0)$-module ${\bf N}_{\StdTab(D)}$ is moreover not isomorphic to any weak Bruhat interval module. 
\begin{example}
The set $\StdTab(D)$ of tableaux in Example~\ref{ex:0H} is ascent-compatible, but their reading words $\{213, 123, 132\}$ do not form an interval in left weak Bruhat order on $\mathfrak{S}_3$. Moreover, any interval in left weak Bruhat order with $3$ elements is a chain, and so elements of the corresponding weak Bruhat interval module can have at most one nonattacking ascent. On the other hand, the tableau $S\in \StdTab(D)$ (see Example~\ref{ex:0H}) has two nonattacking ascents, so ${\bf N}_{\StdTab(D)}$ is not isomorphic to any weak Bruhat interval module. 
\end{example}

The modules ${\bf N}_{\StdTab(D)}$ therefore generalize $\overline{B}(\sigma, \rho)$. 

\begin{remark}
The other family $B(\sigma, \rho)$ of weak Bruhat interval modules are isomorphic to $\widehat{{\bf N}}_{\StdTab(D)}$ for $D$ and $\StdTab(D)$ chosen such that $\{\rw(T) : T\in \StdTab(D)\} = \{\gamma w_0 : \gamma \in [\sigma, \rho]_L\}$ (for $w_0$ the longest permutation in $\mathfrak{S}_n$); in other words, the \emph{reverses} (in one-line notation) of the elements of $[\sigma, \rho]_L$. The isomorphism follows since the $H_n(0)$-action for $B(\sigma, \rho)$ is the same as (\ref{eqn:hat0Hecke}) up to a reversal of the role of ascents and descents, and $i$ is a descent of $\gamma \in [\sigma, \rho]_L$ if and only if $i$ is an ascent of $\gamma w_0$.
\end{remark}

\section{Future directions}

We conclude with some directions for future work on diagram modules. 

\subsection{Structure of diagram modules} It would be interesting to better determine structural properties of the diagram modules. There are many possible avenues here, for example 
\begin{enumerate}
\item determining under what conditions two diagram modules are isomorphic
\item determining under what conditions diagram modules are tableau-cyclic
\item finding restriction formulas for diagram modules
\item determining projective covers of diagram modules.
\end{enumerate}
Similar questions are asked or addressed for weak Bruhat interval $0$-Hecke modules in \cite{JKLO}.

\subsection{Applications of diagram modules} Diagram modules provide a method for constructing new families of functions in $\PQSym$, and realizing certain known families of functions in $\PQSym$, as peak characteristics of $HCl_n(0)$-supermodules. It would be interesting to see what other structures in $\PQSym$ can be obtained via diagram modules, to understand their properties, and in particular, to apply the diagram modules framework to construct analogues in $\PQSym$ of important bases in $\QSym$, along the lines of the quasisymmetric Schur $Q$-functions and the peak Young quasisymmetric Schur functions. The following questions are natural.

\begin{enumerate}
\item What other families of functions in $\PQSym$, new or known, can be constructed using the diagram modules framework?
\item For new families of functions in $\PQSym$ constructed in this way, how are they related to one another, and to important families of functions in $\Sym$, $\QSym$ or $\PQSym$, and what properties do they have? 
For example, the following questions, among others, arise for peak Young quasisymmetric Schur functions. Can we understand their multiplicative structure, e.g., by determining Pieri rules? Do quasisymmetric Schur $Q$-functions expand positively in peak Young quasisymmetric Schur functions? What can be said about their dual basis in the Hopf algebra dual of $\PQSym$ inside the noncommutative symmetric functions? Similar questions would also arise for other families in $\PQSym$ introduced using these methods.
\item For peak algebra analogues of families of functions in $\QSym$ constructed via diagram modules, how do their properties and their relationships to other families in $\PQSym$ compare with those for the analogous functions in $\QSym$? For example, the peak Young quasisymmetric Schur functions contain the Schur $Q$-functions in $\PQSym$, whereas the Young quasisymmetric Schur functions refine the Schur functions in $\QSym$.
\end{enumerate}

%
%

\section*{Acknowledgements}
The author was supported by the Marsden Fund, administered by the Royal Society of New Zealand Te Ap{\= a}rangi.

\bibliographystyle{abbrv} 
\bibliography{0HeckeClifford}

\end{document}